\documentclass[aap]{imsart}

\RequirePackage{amsthm,amsmath,amsfonts,amssymb}
\RequirePackage[numbers]{natbib}

\usepackage{amsmath,amsthm,amsfonts,amssymb,mathrsfs,bm,bbm}
\usepackage[usenames]{color}
\usepackage{hyperref}
\usepackage{amssymb}
\usepackage{appendix}

\startlocaldefs

\newtheorem{theorem}{Theorem}[section]
\newtheorem{definition}[theorem]{Definition}
\newtheorem{proposition}[theorem]{Proposition}
\newtheorem{corollary}[theorem]{Corollary}
\newtheorem{lemma}[theorem]{Lemma}
\newtheorem{remark}[theorem]{Remark}
\newtheorem{example}[theorem]{Example}

\newtheorem{prop}[theorem]{Proposition}

\newtheorem{lem}[theorem]{Lemma}

\newcommand{\supp}{{\rm supp}}

\newcommand{\Spec}{{\rm Spec}}

\newcommand{\del}{\partial}

\renewcommand{\Im}{\mathop{\mathrm{Im}}}
\renewcommand{\Re}{\mathop{\mathrm{Re}}}

\newcommand{\Var}{{\mathop{\mathrm{Var}}\nolimits}}

\newcommand{\ind}{{\bf 1}}

\newcommand{\D}{\mathbb{D}}

\newcommand{\N}{\mathbb{N}}
\newcommand{\Z}{\mathbb{Z}}
\newcommand{\R}{\mathbb{R}}

\renewcommand\P{\mathbb{P}}

\newcommand{\E}{\mathbb{E}}

%footnote

\newtheorem{model}{Model}[section]

\def\Z{\mathbb{Z}}
\def\R{\mathbb{R}}
\def\C{\mathbb{C}}
\def\N{\mathbb{N}}
\def\E{\mathbb{E}}
\def\P{\mathbb{P}}

\def\b{\beta}
\def\a{\alpha}

\def\s{\sigma}

\def\ph{\varphi}
\def\var{\mathrm{Var}}

\def\sin{\mathrm{sin}}

\def\L{\Lambda}
\def\la{\lambda}

\def\del{\delta}
\def\Int{\mathrm{Int}}

\def\d{\mathrm{d}}
\def\det{\mathrm{Det}}
\def\op{\mathrm{op}}
\def\ind{\mathbbm{1}}
\def\Tr{\mathrm{Tr}}
\def\Vol{\mathrm{Vol}}

\def\bs{{\bm{\sigma}}}

\def\bp{\overline{\varphi}}
\def\hp{\hat{\varphi}}
\def\ph{\varphi}
\def\cb{\chi_B}
\def\cbl{\chi_B^L}

%%%%%%%%%%%%%%%%%%%%%%%%%%%%%%%%%%%%%%%%%%%%%%%%%

\def\ol{\overline}

%%%%%%%%%%%%%%%%%%%%%%%%%%%%%%%%%%%%%%%%%%%%%%%%%%

%%%%%%%%%%%%%%%%%%%%%%%%%%%%%%%%%%%%%%%%%%%%%%%%

\def\Det{\operatorname{Det}}
\def\Tr{\operatorname{Tr}}
\def\Vol{\operatorname{Vol}}
\def\op{{\mathrm{op}}}

%%%%%%%%%%%%%%%%%%%%%%%%%%%%%%%%%%%%%%%%%%%%%%%%

\makeatletter
\newcommand{\addresseshere}{%
  \enddoc@text\let\enddoc@text\relax
}
\makeatother

%%%%%%%%%%%%%%%%%%%%%%%%%%%%%%%%%%%%%%%%%%%%%%%%%
\newcommand{\numberthis}{\addtocounter{equation}{1}\tag{\theequation}}
\renewcommand{\l}[0]{\left }
\renewcommand{\r}[0]{\right}

\makeatletter
\renewcommand*{\@cite@ofmt}{\hbox}
\makeatother

\endlocaldefs

\begin{document}

\begin{frontmatter}

\title{Gaussian fluctuations for spin systems and point processes: near-optimal rates via quantitative Marcinkiewicz's theorem}

\runtitle{Gaussian fluctuations for spin systems and point processes}

\begin{aug}

\author[A]{\fnms{Tien-Cuong}~\snm{Dinh}\ead[label=e1]{matdtc@nus.edu.sg}},
\author[A]{\fnms{Subhroshekhar}~\snm{Ghosh}\ead[label=e2]{subhrowork@gmail.com}},
\author[A]{\fnms{Hoang-Son}~\snm{Tran}\ead[label=e3]{hoangson.tran@u.nus.edu}}
\and
\author[A]{\fnms{Manh-Hung}~\snm{Tran}\ead[label=e4]{e0511873@u.nus.edu}}

%%%%%%%%%%%%%%%%%%%%%%%%%%%%%%%%%%%%%%%%%%%%%%
%% Addresses                                %%
%%%%%%%%%%%%%%%%%%%%%%%%%%%%%%%%%%%%%%%%%%%%%%
\address[A]{Department of Mathematics,  National University of Singapore - 10, Lower Kent Ridge Road - Singapore 119076 
\printead[presep={,\ }]{e1,e2,e3,e4}}

% \address[B]{???\printead[presep={,\ }]{e2,e3}}
\end{aug}

\begin{abstract}

In this work, we establish asymptotically Gaussian fluctuations for functionals of a large class of spin models and strongly correlated random point fields, achieving near-optimal rates.
For spin models, we demonstrate Gaussian asymptotics for the magnetization (i.e., the total spin) for a wide class of ferromagnetic spin systems on Euclidean lattices, in particular those with continuous spins. Specific applications include, in particular, the celebrated XY and Heisenberg models under ferromagnetic conditions, and more broadly, systems with very general rotationally invariant spins in arbitrary dimensions. We address both the setting of free boundary conditions and a large class of ferromagnetic boundary conditions, and our CLTs are endowed with near-optimal rate of $O(\log |\L| \cdot |\L|^{-1/2})$ in the Kolmogorov-Smirnov distance, where the system size is $|\L|$. Our approach leverages the classical Lee-Yang theory for the zeros of partition functions, and subsumes as a special case results of Lebowitz, Ruelle, Pittel and Speer on CLTs in discrete statistical mechanical models for which we obtain sharper convergence rates. In a different direction, we obtain CLTs for linear statistics of a wide class of point processes known as $\alpha$-determinantal point processes which interpolate between negatively and positively associated random point fields (including the usual determinantal, permanental and Poisson point  processes). We contribute a unified approach to CLTs in such models (agnostic to the parameter $\alpha$ that modulates the nature of association). Our methods are able to address a broad class of kernels including in particular those with slow spatial decay (such as the Bessel kernel in general dimensions). Significantly, our approach is able to analyse such processes in dimensions $\ge 3$, where structural alternatives such as connections to random matrix theory are not available, and obtain explicit rates for fast convergence in a wide spectrum of models. A key ingredient of our approach is a broad, quantitative extension of the classical Marcinkiewicz Theorem that holds under the limited condition that  the characteristic function is non-vanishing \textit{only on a bounded disk}. This technique  complements classical work of Ostrovskii, Linnik, Zimogljad and others, as well as recent work of Michelen and Sahasrabudhe, and Eremenko and Fryntov. Our applications demonstrate the crucial significance of it being sufficient to control the characteristic function \textit{only} on a small (and possibly shrinking) disk near the origin, a feature that is available in our approach (in contrast to other Marcinkiewicz-type theorems). In spite of the general applicability of the results, our rates for the CLT match the classic Berry-Esseen bounds for independent sums up to a log factor.

\end{abstract}

\begin{keyword}[class=MSC]
\kwd[Primary ]{60F05}
\kwd[; secondary ]{60G55}
\end{keyword}

\begin{keyword}
\kwd{Gaussian fluctuations, spin system, magnetization, XY model, Heisenberg model, $\alpha$-determinantal processes, linear statistics, Marcinkiewicz's theorem, Lee-Yang theorem, Fredholm determinants, Berry-Esseen bound}
% \kwd{spin system}
% \kwd{magnetization}
% \kwd{point processes}
\end{keyword}

\end{frontmatter}
%%%%%%%%%%%%%%%%%%%%%%%%%%%%%%%%%%%%%%%%%%%%%%
%% Please use \tableofcontents for articles %%
%% with 50 pages and more                   %%
%%%%%%%%%%%%%%%%%%%%%%%%%%%%%%%%%%%%%%%%%%%%%%

\newpage
\tableofcontents

%%%%%%%%%%%%%%%%%%%%%%%%%%%%%%%%%%%%%%%%%%%%%%
%%%% Main text entry area:

\section{Introduction}
\label{s:Intro}

\subsection{Gaussian fluctuations for strongly dependent random systems} \label{s:applications}

The theory of Gaussian fluctuations has a long history in statistical mechanics and probability theory. A key ingredient for most classical techniques to establish such central limit theorems is to express the random variable of interest as a sum of small,  independent components. While the classical CLT for triangular arrays of random variables explicitly relies on such criteria, the more dependent nature of the models in statistical mechanics necessitates a search for approximate independence of some sort, as a substitute for the traditional independent setting. As such, most of the existing methods for obtaining CLTs depends on independence in a direct or an indirect manner, and  demonstrating Gaussian fluctuations for strongly dependent statistical mechanical systems is considered to be a challenge. In the strongly dependent settings in which Gaussian fluctuations are known, such as \cite{CL,Leb,GL-1,GL-2,Sos}, the techniques often hinge crucially on specific properties of the system, for instance specific cancellation properties originating from the particular combinatorial structure of the model.

%The  Marcinkiewicz Theorem \cite{Mar} entails that if a random variable has  an entire characteristic function that is of the form $\exp(P(u))$ for some polynomial $P$, then $X$ has to be a Gaussian (unless it is degenerate, i.e. purely atomic). The characteristic function $\Psi$ being of the form $\exp(f)$ for some entire function $f$ is equivalent to the assertion that $\Psi$ has no zeros on the whole of $\C$. Thus, non-vanishing of the characteristic function, coupled with control on its growth rate, can in principle lead to an alternative characterization of the Gaussian distribution.

%The Marcinkiewicz Theorem, in and of itself, is of limited effectiveness as a robust technique for demonstrating Gaussian fluctuations, especially if one is interested in obtaining sharp rates of convergence. 
%Indeed, the condition on non-vanishing of the characteristic function is not satisfied even by a sum of independent and identically distributed Bernoulli random variables, which is perhaps the most basic setting in which a CLT is known to hold. 
%In this work, we investigate an approach to CLTs via an effective and quantitative version of a Marcinkiewicz Theorem, which entails non-vanishing of the characteristic function only on a bounded disk, which is traded off against the growth of the characteristic function on the same disk. 

In this article, we derive central limit theorems for observables of natural interest in two distinct types of statistical mechanical models, using a unified approach that is potentially applicable to many other strongly dependent random systems. On one hand, we obtain CLTs for the magnetization (i.e., the total spin) in very general ferromagnetic spin systems (including, in particular, novel Gaussian fluctuation results for continuous spin systems such as the XY model and the Heisenberg ferromagnet). On the other hand, we demonstrate Gaussian fluctuations for linear statistics for a very general class of point  processes, which includes (and in fact, interpolates between) attractive (Bosonic) and repuslive (Fermionic) processes. A leitmotif of these stochastic systems is their strongly correlated nature, which renders ineffective most of the common approaches to CLT that involve exploiting independence or approximate independence in some form. Our CLTs are based on an effective, quantitative version of the classical Marcinkiewicz theorem, which entails non-vanishing of the characteristic function \emph{only on a bounded disk}, which is traded off against the growth of the characteristic function on the same disk.
In spite of the general scope of the method, the CLTs we obtain achieve near-optimal rates, often bounded above (up to logarithmic factors) by the inverse square root of the system size, which is the classical Berry-Esseen rate for sums of independent random variables. 

Our approach raises interesting possibilities for investigating the large scale stochastic properties of strongly dependent random systems; for instance, concentration phenomena for functionals defined on such models. On this point, an analogy may be drawn with other traditional techniques for studying asymptotic fluctuations. In particular, in \cite{Cha1,Cha2}, it was demonstrated that the Stein's method, a classical tool for obtaining CLTs, could be extended and modified to establish concentration inequalities for a large class of dependent stochastic models. In a similar vein, we envisage enhancements of our approach to yield other significant properties (such as concentration of measure) for strongly dependent random systems,  in particular continuous spin models and random point fields. This is left as a natural avenue for further investigation, pursuant to the results obtained in this paper.

\subsection{Spin systems and Lee-Yang theory} \label{s:appl_spin}

%A significant approach in the statistical physics literature  for obtaining CLTs for the total spin and similar observables in discrete spin systems such as the classical Ising model (and related discrete stochastic models such as monomer-dimer systems) connects to the famous Lee-Yang theorem on the zeros of the  partition functions of such models; for a detailed and modern exposition to this approach, we refer the reader to \cite{FR,LPRS}; for the original works of Lee and Yang see \cite{YL-1,LY-2}. However, the approach is tailored to the situation where the observables are positive integer valued, and is based on understanding the partition function of a finite system as a  polynomial (in the fugacity of the system). Further, other ingredients of the approach, such as the use of Ginibre's theorem \cite{Gin} to demonstrate extensive fluctuations, is also focussed on the discrete setting.

In the context of spin systems, the fluctuation theory for fundamental observables (such as the total spin) is perhaps best understood in the setting of classical Ising models with $\pm$ spins \cite{Ellis_book}. Broadly speaking, it is understood that under ferromagnetic conditions, the total spin satisfies a CLT after appropriate centering and scaling, whereas in the absence of such conditions, the asymptotic fluctuations may be non-Gaussian. However, most of the known techniques for obtaining such results relies on the discrete nature of the spins, which allows one to make use tools from the theory of combinatorial stochastic processes. 

Models with continuous spins are, however, of fundamental importance in statistical mechanics. These include the important spin systems with continuous or vector valued spins; in particular, the XY model (i.e., the plane rotator model) with $\mathbb{S}^1$-valued spins and the classical Heisenberg spin system with $\mathbb{S}^2$-valued spins. 
%The approach of \cite{LPRS}, based on zeros of polynomials, do not apply to these settings.
In this work, we obtain CLTs for general, possibly continuous-valued and multi-component spin distributions.
We emphasize that our technique is independent of the discrete structure of the spins, and obtain novel CLTs in particular for the magnetization in the XY model as well as the classical Heisenberg  model under ferromagnetic conditions. In fact, even for one-component spins, our results hold for a very general class of continuous spin systems that appears to be beyond the scope of existing literature. 
%thereby extending the partition function zero-based approach to  generalised Ising models.

In the setting of spin systems, our approach leverages the classical Lee-Yang theory for the zeros of partition functions; for a detailed and modern exposition to Lee-Yang theory, we refer the reader to \cite{FR}; for the original works of Lee and Yang see \cite{YL-1,LY-2}.
In particular, our approach subsumes as a special case a technique of Lebowitz, Ruelle, Pittel and Speer for deriving CLTs in discrete statistical mechanical models \cite{LPRS}, which develops on an earlier Lee-Yang theory based approach in the physical paper \cite{IS}. Restricted to the discrete setting, we obtain sharper convergence rates  (for instance, we obtain a rate of $O(\log N \cdot N^{-1/2})$ compared to $O(N^{-1/6})$ as in \cite{LPRS} Theorem 2.1; whereas the classical Berry-Esseen bound is $O(N^{-1/2})$).

To lay out our results more precisely, consider the lattice $\Z^d$ and let $\L \subset \Z^d$ be a $d$-dimensional cube. For any two neighbouring vertices $x,y \in \Z^d$, we write  $(x,y)$ for the edge connecting $x$ and $y$. We identify $\L$ with the graph formed by nearest-neighbor pairs of vertices of $\L$. We also denote by $\partial \L$ the \emph{boundary} of $\L$, i.e., the set of all vertices in $\Z^d \setminus \L$ that are connected to some vertices in $\L$.  
A \emph{spin configuration} on $\L$, by definition, is a map $\bs_\L : \L \rightarrow \R^N $. For each site $x\in \L$, we denote the spin at $x$ by $\bs_x:= \bs_\L(x) = (\s_x^1,\ldots,\s_x^N) \in \R^N $.

\begin{remark} \label{rem:abuse_notn}
Although we denote spins as well as standard deviation by the same symbol $\s$, the connotation of $\s$ should be understood from the context.
\end{remark}

Let $\mu_0$ be a probability distribution on $\R^N$. Our spin systems are defined as follows:

\begin{definition}
(i) (Spin systems with free boundary condition) Consider the Hamiltonian
\begin{equation} \label{eq:Hamiltonian}
H_\L(\bs_\L) :=-\sum_{(x,y) \subset \L} \sum_{i=1}^N J_{xy}^i\s_x^i\s_y^i - \sum_{x \in \L} \sum_{i=1}^N h_x^i \s_x^i ,
\end{equation}
where $ J_{xy}^i=J_{yx}^i$ are coupling constants for the edge $(x,y)$ in the direction $i \in \{1,\ldots,N\}$, and $\bm h_x := (h_x^1,\ldots,h_x^N)$ is the external magnetic field at $x \in \L$. 
A spin system with free boundary condition on $\L$ is a random spin configuration $\bs_\L$, which is distributed according to
the following Gibbs distribution on $(\R^N)^{\otimes |\L|}$
\begin{equation} \label{eq:measure}
\P_\L(\bs_\L) \propto \exp(- \b H_\L(\bs_\L)) \prod_{x \in \L} \d \mu_0(\bs_x),
\end{equation}
where $\b>0$ is a parameter, called the inverse temperature.

\medskip
\noindent
(ii) (Spin systems with boundary condition) Let $\mathcal V= \{\bm v_x \in \R^N: x\in \partial \L\}$ be a collection of given vectors (boundary condition) and let $\L':= \L \cup \partial \L$. Consider the Hamiltonian
\begin{equation} \label{eq:cond-Hamiltonian}
H_\L^{\mathcal V}(\bs_\L) :=-\sum_{(x,y) \subset \L'} \sum_{i=1}^N J_{xy}^i\s_x^i\s_y^i - \sum_{x \in \L} \sum_{i=1}^N h_x^i \s_x^i ,
\end{equation}
with the convention that $\bs_x = \bm v_x$ for every $x\in \partial \L$.
A spin system with boundary condition $\mathcal V$ on $\L$ is a random spin configuration $\bs_\L$, which is distributed according to
the following Gibbs distribution on $(\R^N)^{\otimes |\L|}$
\begin{equation} \label{eq:cond-measure}
\P_\L^{\mathcal V}(\bs_\L) \propto \exp(- \b H_\L^{\mathcal V}(\bs_\L)) \prod_{x \in \L} \d \mu_0(\bs_x),
\end{equation}
where $\b>0$ is the inverse temperature.
\end{definition}

%\begin{equation} \label{eq:cond-mu0-integral}
%\int_{\R^N} e^{h \s^1}\d \mu_0(\s) < \infty  \forall h \in \C \text{ such that } \Re(h)\ne 0.
%\end{equation}

For each $\del \in \R$, we define the following subset of $\C^N$
\begin{equation} \label{eq:goodvector}
\Omega^N_\del := \Big \{ \bm z = (z_1,\ldots,z_N) \in \C^N : \Re z_1 - \sum_{j=2}^N |z_j| \ge \del \Big \}.
\end{equation}
For the purposes of our CLT, we will work in the following modelling setup.

\begin{model}[Model classes for CLTs in Ferromagnetic spin systems] \label{model:spin}

We have one among the following choices of  $\mu_0$ and interactions $J^i_{xy}$. The coupling constants $J_{xy}^i$ and the external magnetic field $\bm h_x$ are always assumed to be uniformly bounded. Assume further that there exists $\delta>0$ (not depending on $\L$) such that $\bm h_x \in \Omega^N_\del$ for every $x \in \L$.
\begin{itemize}
\item [(a)] $N=1$: $\mu_0$ is a  compactly supported even measure on $\R$ (that is not degenerate at $0$) satisfying
$$\int_\R e^{u\s} \d \mu_0(\s) \neq 0 ~\text{ whenever } \Re u>0$$
and the ferromagnetic condition $J_{xy} \ge 0$ is satisfied for all edges $(x,y)$. 
%$(x,y) \subset \L$.
\item [(b)] $N=2$ (XY model): $\mu_0$ is the uniform measure on the unit circle $\mathbb{S}^1$, and  the ferromagnetic condition $J_{xy}^1 \ge |J_{xy}^2|$ is satisfied for all edges $(x,y)$.
%$(x,y) \subset \L$.
\item [(c)] $N=3$ (Heisenberg model): $\mu_0$ is the uniform measure on the unit sphere $\mathbb{S}^2$, and  the ferromagnetic condition $J_{xy}^1 \ge \max\{|J_{xy}^2|,|J_{xy}^3|\}$ is satisfied for all edges $(x,y)$. 
%$(x,y) \subset \L$.
\item [(d)]  $N\geq 2$:  $\mu_0$ is rotationally invariant, compactly supported and satisfies 
$$\int_{\R^N} e^{u\s^1} \d\mu_0(\bs) \neq 0 ~ \text{ whenever } \Re u \neq 0$$
and the couplings satisfy
$$J^1_{xy} \geq \sum_{i=2}^N |J^i_{xy}|, \quad \text{for all edges } (x,y).$$
	\end{itemize}
\end{model}

Let $\bm S_\L := \sum_{x\in \L} \bs_x$ be the total spin of the system, then we may state:
\begin{theorem}[CLTs for spin systems with free boundary conditions] \label{thm:CLT_spin}
Let $\bs_\L$ be a  spin system  defined on a cube $\L \subset \Z^d$ with free boundary condition, satisfying the hypotheses in Model \ref{model:spin}. Then $\bm S_\L$ satisfies a CLT as $\L \uparrow \Z^d$. Moreover, we have the quantitative estimate
%Moreover, for every $\bm \alpha \in \mathbb S^{N-1}$
$$\sup_{\bm \alpha \in \mathbb S^{N-1}} \d_{KS}\Big ({\langle \bm S_\L, \bm \alpha \rangle - \E \langle \bm S_\L, \bm \alpha \rangle \over \Var[\langle \bm S_\L, \bm \alpha \rangle ]^{1/2}}, N(0,1) \Big ) \le C \cdot {\log |\L| \over |\L|^{1/2}},$$
where $\d_{KS}(\cdot,\cdot)$ denotes the Kolmogorov-Smirnov distance between two probability distributions, and $C$ is a positive constant.
\end{theorem}

In parallel to Theorem \ref{thm:CLT_spin}, we also demonstrate a CLT for spin systems with \textit{ferromagnetic} boundary conditions.

\begin{theorem}[CLTs for spin systems with ferromagnetic boundary conditions] \label{thm:CLT_spinb}
Let $\bs_\L$ be a spin system defined on a cube $\L \subset \Z^d$ with ferromagnetic boundary condition $\mathcal V \subset \Omega^N_0$, satisfying the hypotheses in Model \ref{model:spin}. 
%If the ferromagnetic coupling condition $J^1_{xy} \ge \max_{k =2,\ldots, N} \{|J^k_{xy}|\} $ is satisfied for every edge $(x,y)$ with $x\in \partial \L, y \in \L$ and 
%If the ferromagnetic boundary conditions $\mathcal V \subset \Omega^N_0$ hold,
Then $\bm S_\L$ satisfies a CLT as $\L \uparrow \Z^d$. Moreover, we have the quantitative estimate
%for every $\bm \alpha \in \mathbb S^{N-1}$
$$\sup_{\bm \alpha \in \mathbb S^{N-1}} \d_{KS}\Big ({\langle \bm S_\L, \bm \alpha \rangle - \E \langle \bm S_\L, \bm \alpha \rangle \over \Var[\langle \bm S_\L, \bm \alpha \rangle ]^{1/2}}, N(0,1) \Big ) \le C \cdot {\log |\L| \over |\L|^{1/2}},$$
where $\d_{KS}(\cdot,\cdot)$ denotes the Kolmogorov-Smirnov distance between two probability distributions, and $C$ is a positive constant.
\end{theorem}

A key feature of Theorem \ref{thm:CLT_spin} is that it provides a unified approach that may be applied irrespective of the number of components $N$ of the spins. This is particularly significant in the context of the fact that many effective tools to understand behaviour of spin systems, become unavailable with increasing number of components $N$, and rigorous analysis is often possible only in high temperature regimes via perturbative expansions. Furthermore, in the few cases where CLT is known (principally, the classical Ising model \cite{Ellis_book}), the literature on convergence rates is very limited. In the multi-component models and the generalised Ising models considered in this article, to our knowledge, CLT for the total spin is not known. In the one-component case (a) in Model \ref{model:spin}, our approach may be applied to the setting where the spins are not compactly supported but only have a finite exponential moment; however, for the sake of brevity and unity of presentation, we adhere to the case of compactly supported spins in this article. On this note, we observe that a natural extension of our approach can be considered in the setting of systems with ambient disorder, for which central limit theorems for the free energy has attracted interest in the literature (see, eg, 
\cite{Cha3} for random field Ising models, and \cite{Lam-Sen} for disordered monomer-dimer systems). 

%Thus, the CLTs come with quantitative rates of convergence, which even in simple settings, match the Berry-Esseen rates upto $\sqrt{\log }$ factors. In particular, even for discrete spin systems, our results improve the rates obtained in \cite{LPRS}  and extend their domain of applicability to a far slower variance growth rate of $\s_n \gg e^{\sqrt{\log n}}$. For instance, in the context of Theorem 2.1 in \cite{LPRS}, for system size $|\L|=n$, when $|X_n| \lesssim n$ and $\var(X_n) \gtrsim n$ (which is true for many spin systems, as we will see in Section \ref{s:appl_spin}, \cite{LPRS} gives a bound of only $O(n^{1/3}(\var{X_n})^{-1/2})  = O(n^{-1/6})$; whereas our approach provides a bound of $O(\sqrt{\log n} (\var{X_n})^{-1/2}) = O(\sqrt{\log n} \cdot n^{-1/2})$.

\subsection{Linear statistics of $\a$-determinantal processes} \label{s:appl_det}

Determinantal point processes \cite{Sos,Bor,HKPV} have emerged as a significant probabilistic model for capturing a wide class of phenomena in statistical physics, quantum theory, combinatorics, representation theory and the theory of integrable systems. These processes are characterised by their so-called correlation functions, which represent the probability (densities) of having particles at specified locations; the latter being given by a determinants of a certain kernel matrix with respect to a background measure.

Determinantal processes have been extended and generalised in multiple directions; these include in particular the so-called permanental processes where the determinantal structure of the correlation functions is replaced by a permanental one.  A standard one-parameter family of point processes that interpolate between the determinantal and permanental ones, also including the classical Poisson point process, is the family of $\a$-determinantal processes. To define these processes, we first define the notion of the $\a$-determinant of a matrix $A \in \C^{n \times n}$
\begin{equation} \label{eq:adet}
\det_\a[A] :=\sum_{\s \in S_n} \a^{n-\nu(\s)} \prod_{i=1}^n A_{i\s(i)},
\end{equation}
where $S_n$ is the symmetric group on $n$ symbols, and $\nu(\s)$ stands for the number of cycles in $\s \in S_n$. It is easy to see that for $\a=-1$, $\det_\a$ is the usual determinant; for $\a=+1$, it is the permanent and for $\a=0$, we have $\det_0=\prod_{i=1}^n A_{ii}$.

Let $\Xi$ be a locally compact Polish space endowed with a non-negative Borel measure $\mu$ and a Hermitian kernel $K : \Xi \times \Xi \rightarrow \C$.
The $\a$-determinantal point process on $\Xi$ with kernel $K$ and background measure $\mu$ is a random locally finite point set on $\Xi$ such that for any finite subset $\{x_1,\ldots,x_n\} \subset \Xi$, the probability (density, with respect to $\mu^{\otimes n}$) of having points at these locations is given by the $n$-point correlation function
\[ \rho_n(x_1,\ldots,x_n)=\det_\a \l[(K(x_i,x_j))_{1 \le i,j \le n} \r]. \]

It is known that for $\a>0$, $\a$-determinantal processes exist under the conditions that $K$ is a bounded symmetric integral operator on $L^2(
\mu)$ that is positive semi-definite and locally trace class, and $\a \in \{ 2/m : m \in \N_+ \}$. For $\a<0$, it is additionally necessary that $\text{Spec}(K) \subset [0,-1/\alpha]=[0,1/|\alpha|]$. For a detailed account of  $\a$-determinantal processes, we refer the reader to \cite{ST,Maj,Bac}. Clearly, $\a=-1$ corresponds to determinantal point processes, whereas $\a=+1$ corresponds to permanental processes.

\subsubsection{CLTs for linear statistics}

Let $X$ be a point process on a space $\Xi$. For a test function $\varphi : \Xi \rightarrow \R$ with compact support, the linear statistic $\L(\varphi)$ is given by the sum $\L(\varphi)=\sum_{x \in X} \varphi(x)$. Linear statistics are fundamental objects of interest in understanding point processes; indeed, under very general conditions, the statistical law of a point process is completely determined by the distribution of its linear statistics.
In the present work, we consider $\Xi=\R^d$ and $\varphi_L(\cdot):=\varphi(\cdot/L)$, for a parameter $L>0$. We will investigate the family of random variables given by the linear statistics $\L(\varphi_L)$ and, under very general  conditions, obtain a CLT for them  as $L \to \infty$.

CLTs for such families of linear statistics of $\a$-determinantal processes are known in  specialised situations \cite{ST}. The fundamental problem in CLTs for linear statistics is that, although $\L(\varphi_L)$ is expressible as a sum, the summands are highly correlated because of the correlation structure of the $\a$-determinantal point process, and hence standard techniques for CLTs for sums of independent variates cannot be applied in this setting. Generally speaking, such CLTs require a delicate and elaborate analysis of cumulant expansions of $\L(\varphi_L)$, often exploiting particular analytical structures accorded by the specific setting under consideration. For instance, \cite{ST} deals with the scenario where $K$ is a translation invariant convolutional operator and $\mu$ is the Lebesgue measure on $\R^d$. Such invariance assumption allows the application of Fourier analytic techniques, which are exploited to perform asymptotic analysis of cumulant expansions for $\L(\varphi_L)$. Another special case where CLTs are better understood is that of determinantal processes, where, using a variety of specialised tools such as   Fourier analysis and orthogonal polynomials, progress has been achieved in different settings \cite{ST,Sos-1,Duits,Bar,RV}. Furthermore, in most cases, the literature appears to be limited regarding rates of convergence to normality.

We are able to invoke the techniques in this article to provide a succinct and self contained proof of CLTs for $(\L(\varphi_L))_{L>0}$ for general $\a$-determinantal processes,  along with explicit rates of convergence.

\begin{theorem} \label{thm:CLT_det}
Let  $X$ be an $\a$-determinantal process on $\R^d$ with a Hermitian kernel $K$ and background measure $\mu$ satisfying
\begin{enumerate}
\item[(i)] $K$ is a bounded symmetric integral operator on $L^2(\mu)$ which is locally trace class and positive semi-definite;
\item[(ii)] $\alpha \in \{{2\over m}: m \in \N_+\}\cup \{-{1\over m}: m \in \N_+ \}$, and for $\alpha < 0$ we further require that $I+\alpha K$ is also positive semi-definite, i.e., $\Spec (K) \subset [0,-1/\alpha]$;
\item[(iii)] There exists $C>0$ such that $\E[X(B)] \le C \Vol(B)$ for every compact subsets $B$ of $\R^d$, where $X(B)$ denotes the number of points of $X$ inside $B$.
\end{enumerate}
Let $\varphi : \R^d \rightarrow \R$ be a bounded, compactly supported function and $\varphi_L(\cdot) := \varphi( \cdot / L), L>0$. If 
\begin{equation} \label{eq:variancegrowth}
{\frac{\var[\L(\varphi_L)]^{1/2}} {\log L}} \to \infty \quad \text{ as }L\rightarrow \infty, 
\end{equation}
then $\{\L(\varphi_L)\}_{L>0}$ satisfies a CLT upon centering and scaling as $L \uparrow \infty$, with its Kolmogorov-Smirnov distance  from a standard Gaussian decaying at the rate $O(\log L \cdot (\var[\L(\ph_L)])^{-1/2})$.
\end{theorem}

%\begin{remark} \label{rmk:dpp-conditions}
The assumptions in Theorem \ref{thm:CLT_det} are actually mild and reasonable. Indeed, conditions (i) and (ii) ensure the existence of the $\a$-determinantal point process $X$ with the prescribed kernel $K$ and background measure $\mu$ (see \cite{ST}); whereas the condition (iii) is easily satisfied for most of point processes of interest. For example, if $\mu$ has a bounded density with respect to the Lebesgue measure on $\R^d$, i.e., $\d\mu(x) = f(x)\d x$ with $f(x) \le c, \forall x\in \R^d$ for some constant $c>0$ and the first intensity $K(x,x) \le b, \forall x\in \R^d$ for some $b>0$, then for any compact set $B\subset \R^d$ 
$$\E[X(B)] = \int_{B} K(x,x)f(x)\d x \lesssim \Vol (B).$$
However, to obtain CLTs, we still need to verify a condition \eqref{eq:variancegrowth} on the growth of variance of linear statistics. In general, this condition will require extra assumptions on the kernel and the background measure. In the next section, we provide  some sufficient conditions for variance growth of linear statistics.
%\end{remark}

\subsubsection{Sufficient conditions for variance growth}
We lay out the modelling setup for which the growth for variance can be verifed. In what follows, we will use the notation $C(x,r)$ to denote the cube with centre $x \in \R^d$ and side length $r$,   and $A_n^x(r)$ to denote the annulus 
$$A_n^x(r):=\{y \in \R^d : nr \le \|x-y\| \le (n+1)r\}.$$

\begin{model} \label{model:det}
Let $\mu$ be such that $\d \mu(x) = f(x) \d x$ with $c^{-1} \le f(x) \le c, ~\forall x \in \R^d$ for some constant $c>0$, and $K(x,x) \le b$ for all $x \in \R^d$.  
Let $\varphi : \R^d \rightarrow \R$ be a bounded compactly supported function with $\|\varphi\|_2 >0$. 
We additionally have one of the following conditions hold:
\begin{itemize}
\item[(i)] For any $\del>0$, there exist $m_\del >0$ and $r=r(\del)>0$ such that
$$\Vol(\{y \in C(x,r) : K(y,y) \ge m_\del \}) \ge (1-\del) \Vol(C(x,r)), \quad\forall x \in \R^d.$$
If $\alpha < 0$, we further require that $\|K\|_{\op} < |\alpha|^{-1}$.
\item[(ii)] $\a<0$,  and either of :
         \begin{itemize}
         \item[(ii.a)] $|K(x,y)| \ge a, ~\forall \|x-y\| < \del$ for some $a,\del>0$ and $d>4$.
         \item[(ii.b)] there exist $\b \in ({d\over 2}, d )$, positive constants $r,c_1,c_2 \in \R_+$ and $n_0 \in \N_+$, such that for every $x \in \R^d$, the set
\[ E_x := \{y : |K(x,y)| \ge c_1 \|x-y\|^{-\b} \} \]
satisfies 
\[\Vol(E_x \cap A_n^x(r)) \ge c_2 \Vol(A_n^x(r)),\quad \forall n \ge n_0.\]
         \item[(ii.c)] $d>2$, and
         \begin{equation} \label{eq:3moment}
         \sup_{x\in \R^d} \int_{\R^d} \|x-y\|^3 |K(x,y)|^2 \d y <\infty
         \end{equation}
         and
         \begin{equation} \label{eq:rot_inv}
         \inf_{x\in\R^d} \inf_{u\in \mathbb S^{d-1}} \int_{\R^d} |\langle y-x,u\rangle |^2 |K(x,y)|^2 \d y > 0,
         \end{equation}
         and we further assume $\varphi \in C^2$.
         \end{itemize}
\end{itemize}
\end{model}

Under such conditions, we are able to show:
\begin{theorem} \label{thm:var_det}
Let $X$ be an $\alpha$-determinantal process on $\R^d$ with a positive semi-definite kernel $K$ and back ground measure $\mu$ on $\R^d$ and $\varphi:\R^d \rightarrow \R$ be a test function as in the Model \ref{model:det}. Then
$$\var[\L(\varphi_L)]\gtrsim L^{\eta},$$
 where 
\begin{enumerate}
\item[(i)] $\eta = d$ for the model class Model \ref{model:det} \text{(i)};
\item[(ii)] $\eta = d - 4 $ for the model class Model \ref{model:det} \text{(ii.a)};
\item[(iii)] $\eta = 2(d- \beta)$ for the model class Model \ref{model:det} \text{(ii.b)};
\item[(iv)] $\eta = d - 2 $ for the model class Model \ref{model:det} \text{(ii.c)}.
\end{enumerate}
\end{theorem}

\begin{remark}
In view of Theorem \ref{thm:CLT_det}, we observe that we have CLTs for linear statistics with a convergence rate of $O(\log L \cdot L^{-\eta/2})$, where the value of $\eta$ can be obtained from Theorem \ref{thm:var_det} for various model classes of interest. 
\end{remark}

We note that these conditions in Model \ref{model:det} are very general and cover most kernel classes of interest. In Section \ref{s:alpha_det}, we will demonstrate these conditions in some important cases. For instance, (i) above is a quantitative version of the statement that $K(x,x)>0$ for a.e. $x \in \R^d$; whereas (ii.a) entails positivity of the kernel near the diagonal, and the alternative (ii.b) covers the case where the kernel might vanish near the diagonal but does not decay too fast away from it. In particular, this addresses the setting of slowly decaying kernels, such as the Bessel kernel (c.f. Example \ref{ex:Fourier}), which are  known to pose a particularly difficult challenge in stochastic geometry.
For (ii.c), the condition \eqref{eq:3moment} is easily satisfied when the kernel $K$ decays fast enough away from the diagonal; whereas \eqref{eq:rot_inv} holds for any translation and rotation invariant kernel $K$. For more details on variance growth considerations, we refer the reader to Section \ref{s:vargrowth}. 

Taken together, these conditions cover nearly all translation invariant kernel classes of interest for $d>1$; but significantly, they also cover perturbations thereof (e.g. via conjugation by bounded functions), whereas such perturbations may render ineffective  Fourier analytic or other structure-based techniques.
For instance, the intuition behind (ii.b) in Model \ref{model:det} is that there is a polynomial lower bound on the decay of $K(x,y)$ in the separation $\|x-y\|$, but we do not assume it uniformly for all well-separated pairs $x,y$; instead we require that it holds on a positive fraction of the space. This allows for zeros of the kernel $K(x,y)$, which is necessary for many applications, see Example \ref{ex:Fourier}.

We further observe that our approach has limited sensitivity to the ambient dimension $d$, and is particularly effective in dimensions $d>2$, where connections of determinantal processes to random matrices are not available.

Thus, Model \ref{model:det} implies power law lower bounds on $\var[\L(\varphi_L)]$, which translates into power law decay in Theorem \ref{thm:var_det}. This is significant in the context of CLTs for linear statistics of $\a$-determinantal processes, where literature on rates of convergence to Gaussianity is limited (even for the determinantal case $\a=-1$).

The case $\a=0$ in $\a$-determinantal processes corresponds to Poisson point processes, for which CLTs for linear statistics are understood to be simpler in nature due to spatial independence. This case can also be covered under the ambit of our technique; however, we skip the details for reasons of brevity, and note in passing that our necessary estimates will follow directly from the well-known Campbell formula for the Poisson process \cite{Ka}.

% \medskip\noindent
% \textbf{Acknowledgements.}
% T.C.D. was supported by the NUS and MOE grants  R-146-000-319-114 and MOE-T2EP20120-0010. S.G. was supported in part by the MOE grants  R-146-000-250-133, R-146-000-312-114 and MOE-T2EP20121-0013. 
% H.S.T. was supported by the NUS Research
% Scholarship.
% We would like to thank Alexandre Eremenko for  bringing to our notice  the work of L.B. Golinskii. We would also like to thank Julian Sahasrabudhe and Marcus Michelen for illuminating discussions, and for pointing us to Theorem 12.2  in \cite{MS-2}. We additionally thank Frank den Hollander for his feedback and thoughtful discussions. 

%\medskip\noindent
%\textit{Data availability statement.} Data sharing not applicable to this article as no datasets were generated or analysed during the current study.

%\end{document}

\section{A technical tool: a quantitative Marcinkiewicz theorem}
\label{s:marcinkiewicz}
%%%%%%%%%%%%%% \section{Proofs of Theorem \ref{t:main-1}, Corollaries \ref{c:Marcin} and \ref{c:CLT}}

A key ingredient of our CLTs in the present article is a \emph{quantitative version} of the Marcinkiewicz theorem. To elaborate, let $X$ be a real-valued random variable and $\Psi_X(u) := \E[e^{iuX}]$ be the characteristic function of $X$. The classical Marcinkiewicz theorem states that:
\begin{theorem}[Marcinkiewicz, \cite{Mar}]
If $\Psi_X(u) = \exp (P(u))$ for some polynomial $P(x)$, then either $X$ is degenerate (i.e., almost surely a constant) or $X$ is a Gaussian.
\end{theorem}

We remark that the characteristic function $\Psi_X$ being of the form $\exp(f)$ for some entire function $f$ is equivalent to the assertion that $\Psi_X$ is well-defined and has no zeros on the whole of $\C$. Thus, non-vanishing of the characteristic function, coupled with control on its growth rate, can in principle lead to an alternative characterization of the Gaussian distribution. For a brief review on Marcinkiewicz-type theorems and related literature, we refer readers to our Appendix \ref{s:proof-1}.

However, the Marcinkiewicz theorem, in and of itself, is of limited effectiveness as a robust technique for demonstrating Gaussian fluctuations, especially if one is interested in obtaining sharp rates of convergence. Indeed, the assumptions on the well-definedness and the non-vanishing property of the characteristic function on the whole complex plane are often prohibited in applications.

We now introduce a quantitative version of the Marcinkiewicz theorem, which provides upper bounds on the Kolmogorov-Smirnov distance (\textit{abbrv} KS distance) between a (centered) random variable $X$ and the standard Gaussian $N(0,1)$, while only assuming a \emph{bounded} zero-free region for the characteristic function. The relaxation on hypotheses, as remarked above, is crucial in our applications. To elaborate, let us denote the cumulative distribution function (CDF) of a random variable $X$ by $F_X(x)$, and that of a standard Gaussian $N(0,1)$ by $\Phi(x)$. We further denote by $\bar X:= X - \mathbb E[X]$, $\hat X:= (X-\mathbb E[X])/\sqrt{\var[X]}$, and define $\log^+:=\max(\log,0)$.
Our quantitative Marcinkiewicz theorem is stated as follows:

\begin{theorem} \label{t:main-1}
Suppose there is a number $r > 0$ such that $\E[e^{r|X|}]$ is finite and
the characteristic function $u\mapsto \E[e^{iuX}]$ does not vanish on the closed disk $\overline{\D(0,r)}$ of center $0$ and radius $r$.
Then, we have for some universal constant $A>0$
$$\sup_{x\in\R} |F_{\bar X}(x)-\Phi(x)|\leq 2|\sigma-1| + A (1+\log^+\log \max_{|u|=r}|\E[e^{iuX}]|) r^{-1}.$$
\end{theorem}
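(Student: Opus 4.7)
The plan is to combine three classical ingredients: a holomorphic extension of $\Psi_X$ to the closed disk $\overline{\D(0,r)}$, a Borel--Carath\'eodory type estimate transferring growth control on $\Psi_X$ into pointwise control on $\log \Psi_X$, and Esseen's smoothing inequality transferring characteristic-function estimates into bounds on the Kolmogorov--Smirnov distance.

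\textbf{Setup and reductions.} First, the hypothesis $\E[e^{r|X|}]<\infty$ extends $\Psi_X$ holomorphically to the horizontal strip $\{|\Im u|<r\}$, hence to $\overline{\D(0,r)}$. Since $\Psi_X$ is non-vanishing there by hypothesis, the branch $f:=\log\Psi_X$ with $f(0)=0$ is a well-defined holomorphic function on $\overline{\D(0,r)}$. Setting $\bar f(u):=f(u)-iu\E[X]=\log\Psi_{\bar X}(u)$ reduces to the centered case, with $\bar f(0)=\bar f'(0)=0$ and $\bar f''(0)=-\sigma^2$. I would then reduce to the unit-variance case $\sigma=1$: the elementary bound $\sup_x|\Phi(x/\sigma)-\Phi(x)|\le 2|\sigma-1|$ together with the triangle inequality absorbs the loss into the first term of the theorem.

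\textbf{Pointwise control on $\log\Psi_{\bar X}$.} Next I would apply the Borel--Carath\'eodory inequality to $\bar f$ on $\overline{\D(0,r)}$: since $\Re\bar f(u)=\log|\Psi_{\bar X}(u)|\le \log M'$ on $|u|=r$, where $M':=\max_{|u|=r}|\Psi_{\bar X}(u)|$, we get $|\bar f(u)|\le C_1\log M'$ on $|u|\le r/2$. Writing $\bar f(u)=-u^2/2+u^3 h(u)$ in the reduced unit-variance case, with $h$ holomorphic on $\overline{\D(0,r)}$, the maximum modulus principle applied to $u^3 h(u)$ yields
\[
|u^3h(u)|\le C_2|u|^3\log M'/r^3
\]
on a still smaller disk. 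Equivalently, this is a bound on the higher cumulants of $\bar X$ coming from the Taylor expansion of $\bar f$.

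\textbf{Passing to the KS distance via Esseen.} Finally I would invoke Esseen's smoothing inequality
\[
\sup_{x\in\R}|F_{\bar X}(x)-\Phi(x)|\le \frac{1}{\pi}\int_{-T}^T \frac{|\Psi_{\bar X}(u)-e^{-u^2/2}|}{|u|}\,du+\frac{C_0}{T}
\]
at a scale $T$ of order $r$. Using that for real $u$ we have $\Re\bar f(u)\le 0$ (from $|\Psi_{\bar X}(u)|\le 1$), in the Taylor regime where $|u^3h(u)|\le 1$ the integrand is controlled by $C e^{-u^2/2}|u|^3\log M'/r^3$; outside that regime one falls back on the trivial bound $|\Psi_{\bar X}(u)-e^{-u^2/2}|\le 2$. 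Splitting the integral accordingly, balancing against the tail $C_0/T$, and converting $M'$ back to $M=\max_{|u|=r}|\Psi_X(u)|$ by an elementary estimate on $|\E[X]|$ that uses the finite exponential moment, yields the desired bound.

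\textbf{Main obstacle.} The delicate point is calibrating the cut-off scales so as to produce the $\log\log M$ factor rather than the coarser polynomial $(\log M)^{\alpha}$ that a direct balance in Esseen's inequality would yield. Achieving this sharper rate requires carefully exploiting the asymmetry between the bound on $\Re\bar f$ on the real axis (where it is $\le 0$) and on the full boundary $|u|=r$ (where it is only $\le \log M$). I expect that a refined Hadamard three-circles or Phragm\'en--Lindel\"of style argument, or else a multi-scale splitting of the Esseen integral, is required to squeeze out the $\log\log$ rate; this is the step where the finest analytic input is needed.
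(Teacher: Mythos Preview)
Your setup and reductions are correct and match the paper: the holomorphic logarithm, the centering, the bound $\sup_x|\Phi(\sigma x)-\Phi(x)|\le 2|\sigma-1|$, and the use of the Esseen smoothing inequality at the end. You also correctly identify that a direct Borel--Carath\'eodory bound on the Taylor remainder yields only a polynomial $(\log M)^{\alpha}$ rate, not $\log\log M$.

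The gap is in the proposed fix. Neither Hadamard three circles nor Phragm\'en--Lindel\"of is the right tool; indeed the paper explicitly notes that Phragm\'en--Lindel\"of for an infinite strip is unavailable precisely because one only controls $\Psi_X$ on a bounded disk. What is actually needed is a much more structured exploitation of the \emph{ridge property} $h(t)\ge h(t+is)$ for $h(u)=\log|\E[e^{uX}]|$ and all real $t,s$ (not merely the special case $|\Psi_{\bar X}(u)|\le 1$ on the real axis). The paper first proves a harmonic-function lemma on rectangles (via conformal maps to the disk and an explicit Dirichlet decomposition) showing that the ridge property implies the quasi-monotonicity $h(t+is)-h(t+is')+1\ge 0$ on a rectangle of height $\asymp r/\kappa$ where $\kappa=\log^+\log\E[e^{r|X|}]$. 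This already localises the analysis to a disk of radius $\asymp r/\kappa$, which is where the $\log\log$ enters.

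The second missing ingredient is an improved Marcinkiewicz-type argument controlling the Taylor coefficients $a_n$ individually rather than en bloc. The paper builds the Newton polygon of the points $(n,\log(|a_n|r_2^n))$ for $2\le n\le N$ (with $N$ a \emph{universal} constant) and shows by contradiction that consecutive slopes differ by at most $10$: if not, one constructs a complex $u$ with $\Re f(u)>|f(t)|$ for some real $t$, violating the ridge property. This forces $|a_n|\lesssim r_3^{2-n}$ with $r_3\asymp r/\kappa$, whence $|R(it)|\lesssim |t|^3/r_3$, and the Esseen integral then gives $O(1/r_3)=O(\kappa/r)$. Your Borel--Carath\'eodory step by itself can never see this polygon structure, and a multi-scale splitting of the Esseen integral alone will not recover it either.
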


As a corollary, a quantitative version of the central limit theorem can be deduced from Theorem \ref{t:main-1}. Let $\{X_n\}_{n\ge 1}$ be a sequence of real-valued random variables of variances $\sigma_n^2$ with $\sigma_n>0$.
We denote the associated normalized and centered random variables by
$$\hat X_n:={\bar X_n\over \sigma_n}={X_n-\E[X_n] \over \sigma_n}\cdot$$

\begin{corollary} \label{c:CLT}
Assume that there are positive real numbers $r_n$ such that
$\E[e^{r_n|X_n|}]$ is finite and
the characteristic function $u\mapsto \E[e^{iuX_n}]$ does not vanish on the closed disk $\overline{\D(0,r_n)}$. Assume also that
$$\lim_{n\to\infty}  {1+\log^+ \log \sup_{|u|=r_n} |\E[e^{iu X_n}]| \over r_n\sigma_n} = 0.$$
Then, the sequence $(X_n)$ satisfies the CLT, that is, $\hat X_n$ converges in law to the Gaussian $N(0,1)$ as $n$ tends to infinity. Moreover, we have for some universal constant $A>0$
\begin{equation} \label{eq:CLT_rate}
\sup_{x\in\R} |F_{\hat X_n}(x)-\Phi(x)|\leq {A(1+\log^+ \log \sup_{|u|=r_n} |\E[e^{iu X_n}]|) \over r_n\sigma_n} \cdot
\end{equation}
\end{corollary}

At this point, we compare our rate estimates with the classical Berry-Esseen bounds for sums of random variables. For concreteness, we focus on the setting of sums of independent and identically distributed (i.i.d.) bounded random variables $S_n=\sum_{i=1}^n X_n$.  Berry-Esseen  theorem \cite{Berry,Es-1,Es-2,Dur} gives a convergence rate of $n^{-1/2}$ to the standard normal for $\hat{S}_n$; this rate is optimal in the situation under consideration. We now apply Corollary \ref{c:CLT} to this setting.

Without loss of generality, we assume that the $X_i$-s are centered. We notice that if $X$ is a random variable following the common distribution of $X_i$-s, then the characteristic functions satisfy $\Psi_{S_n}(u)=\l(\Psi_X(u)\r)^n$. Since $\Psi_X(u)$ does not have zeros arbitrarily close to $0$, we may conclude that $\Psi_{S_n}$ does not have zeros in a neighbourhood of radius $\del$ of $0$, where $\del$ does not depend on $n$. This shows that in Corollary \ref{c:CLT}, we may take $r_n=\del$. On the other hand, a simple calculation shows that $\s_n= c \cdot \sqrt{n}$ for some constant $c>0$. Finally, $\E[e^{r_n|S_n|}] \le \Psi_{|X|}(\del)^n$, implying that $\log^+\log \E[e^{r_n|S_n|}] \lesssim \log n$.

Combining these ingredients, we obtain a CLT convergence rate of $\log n \cdot   n^{-1/2}$ for $\hat{S}_n$, which differs from the classical Berry-Esseen rate by only a factor of $\log n$. On the other hand, the Berry-Esseen approach is well-suited for sums of random variables which are preferably independent or weakly dependent; whereas our techniques apply much more generally to strongly correlated settings, and does not assume any algebraic structure (such as sums of smaller ingredients) on the sequence of random variables under consideration.

A key step in the proof of Theorem \ref{t:main-1} is controlling the cumulant sequence. Roughly speaking, the non-vanishing property of the characteristic function over some domain in $\C$ guarantees the existence and the holomorphicity of the cumulant generating function on that domain, which allows us to analyze the associated cumulant sequence. This has been accomplished by Michelen and Sahasrabudhe using a complex analytic-probabilistic argument in their seminal works \cite{MS-1,MS-2}. Primarily, these works investigate CLTs for non-negative integer-valued random variables under conditions on zero-free regions for their generating polynomials, in relation to the earlier works \cite{LPRS}, \cite{GLP} and a related variance growth question due to Pemantle. However, in \cite{MS-2} Section 12, they also provide a CLT for general real-valued random variables, which is an extension of their approach. To demonstrate their result, let $X$ be a real-valued random variable with standard deviation $\sigma>0$. The \emph{probability generating function} of $X$ is defined as $f_X(z):= \E[ z^X]$ for $z\in \C\setminus \R_{\le 0}$, where $z^x:= \exp (x \log z)$ for $x\in \R, z\in \C\setminus \R_{\le 0}$, and $\log$ denotes the principal branch of the complex logarithm.
\begin{theorem} [Theorem 12.2 in \cite{MS-2}] \label{t:MS-2}
Let $X$ be a real-valued random variable. Assume that $f_X(\rho)<\infty$ for all $\rho \ge 0$ and there exists $\delta>0$ such that $f_X(z)$ has no zeros in the sector
\[S(\delta):= \{z \in \C\setminus\{0\}: -\delta \le \arg z \le \delta\}, \]
where $\arg$ denotes the principal argument.
If $u_X(z):= \log |f_X(z)|$ satisfies
\[ \lim_{|z|\rightarrow \infty, z\in S(\delta)} \frac{u_X(|z|)}{|z|^\kappa} = 0 \quad \text{and}\quad \lim_{|z| \rightarrow \infty, z\in S(\delta)} \frac{|u_X(1/z)|}{|z|^\kappa} =0\]
for some $\kappa>0$, then
\[ \sup_{x\in \R}|F_{\hat X}(x)- \Phi(x) | = O \Big ( \frac{\max\{\delta^{-1},\kappa\}}{\sigma} \Big ) \cdot \]
\end{theorem}

In the coordinate $u$ given by $z=e^u$, the assumptions in Theorem \ref{t:MS-2} translate to that $\mathbb E[e^{uX}]< \infty$ for all $u\in \R$ (which is a strong moment condition on $X$), and that $\mathbb E[e^{uX}]$ is zero-free on the infinite strip $\{ u \in \C: |\Im (u)| \le \delta\}$.
In a similar flavour to this result, Theorem 2 in \cite{EF} also obtains a CLT for real-valued random variables under the assumptions that the characteristic  function is entire, zero-free on an infinite vertical strip and satisfies global growth conditions. 

A significant point to note about Theorem \ref{t:main-1} and Corollary \ref{c:CLT} in this paper is that they provide effective versions of the quantitative CLT that are applicable to wide classes of random variables of interest in statistical mechanics, including in situations that are not covered by existing results of similar flavour. In particular, we only need to assume that the characteristic function is finite on a (small, possibly shrinking) disk around the origin,   zero-free on this  disk, and a growth estimate that needs to hold only on this disk. The limited and local nature of these assumptions are crucial for applications, demonstrated in Remarks \ref{rem:spin} and \ref{rem:det}. This includes, in particular, the applications to continuous and multi-component spin systems and strongly correlated random point fields considered in the present paper.

While Theorem \ref{t:main-1} can be obtained by following a similar approach as \cite{MS-2}, for completeness we provide proofs of Theorem \ref{t:main-1} and Corollary \ref{c:CLT} in  the appendix Section \ref{s:proof-1}. It may be noted that our proofs use a simpler and more succinct argument, purely based on classical complex analytic ideas.

\section{CLTs in general spin systems:  generalised Ising, XY and Heisenberg models}
\label{s:spin_systems}
\subsection{CLTs for free boundary condition spin systems}
We recall the definition of free boundary condition spin systems from Section \ref{s:appl_spin}; in particular, the Hamiltonian $H_\L$ \eqref{eq:Hamiltonian} and the spin measure $\P_\L(\bs_\L)$ \eqref{eq:measure}. Recall that the spin $\bs_x \in \R^N$ for each $x \in \L$. We will consider one-component $(N=1)$ models with general real valued spins (essentially, generalised versions of the classical Ising model), and multi-component spin systems with $N\geq 2$ components.

We would be interested in the \textit{partition function} for the spin system, which is given by the integral
\begin{equation} \label{eq:partition_function}
 \mathcal{Z}_{\beta,\Lambda} (\{ \bm h_x\}_{x\in \Lambda} ):=\int_{(\R^N)^{\otimes |\Lambda|}}\exp(- \beta H_{\Lambda}(\bm \sigma_{\Lambda})) \prod_{x \in \Lambda} \d \mu_0(\bm \sigma_x).
\end{equation}

\subsection{Ferromagnetism and its generalisations}

When $N=1$, the model is called \textit{ferromagnetic} if $J_{xy}=J_{yx} \ge 0$ for all $x,y$. For ferromagnetic models with homogeneous nearest neighbour interaction  $J$ and uniform magnetic field $h$, it is known that, at positive temperature $\beta^{-1}$, the total spin $\sigma$ exhibits a Gaussian central limit theorem as the $d$-dimensional cubic domain $\Lambda \uparrow \mathbb Z^d$ (see, e.g.,  \cite{Ellis_book}).

For multi-component spins (i.e., $N \ge 2$), the generalisation of the ferromagnetic condition is of considerable interest, but not straightforward. It is considered in the literature that for the XY model, the appropriate generalisation is $J_{xy}^1 \ge |J_{xy}^2|$, whereas for the classical Heisenberg model, the analogous condition is $J_{xy}^1 \geq \max\{|J_{xy}^2|, |J_{xy}^3|\}$; recall the spin Models \ref{model:spin}.

For $N \ge 3$, to the best of our knowledge, entirely satisfactory  generalisations are not known,  even for special choices of the background spin measure $\mu_0$. A natural extension of ferromagnetism for the XY and classical Heisenberg models to higher component spins would be to posit that $J_{xy}^1 \geq \max \{|J_{xy}^i|: 2 \leq i \leq N\}$. Unfortunately, crucial theorems on such spin models (such as the Lee-Yang theorem for the zeros of the partition function, see Section \ref{s:Lee-Yang}) are known to hold under a much more restrictive condition, namely, $J_{xy}^1 \geq \sum_{i=2}^N |J_{xy}^i|$.

%A crucial problem with the condition \eqref{eq:ferr_higher} is that it is not satisfied by the important case of \textit{isotropic} interactions, where for each $(x,y) \subset \Lambda$, the interaction strength $J_{xy}^i$ is independent of $i$ and is thus the same in all directions. For zero external magnetic field and $\mu_0$ the uniform distribution on the unit sphere $\mathbb{S}^{N-1}$, this gives rise to the $O(N)$ spin model, which is characterised by its distributional invariance under rotations in $\mathbb R^N$. However, the condition \eqref{eq:ferr_higher} does not cover the isotropic setting, unlike the much weaker conditions for ferromagnetism in the XY and the classical Heisenberg models.

\subsection{Lee-Yang theorem} \label{s:Lee-Yang}

We will now discuss the celebrated Lee-Yang theorem for  spin systems, which will be a key tool in verifying the zero-free condition with regard to CLT for the total spin. To this end, we first recall the definition of the subset $\Omega^N_\del \subset \C^N$ with $\del \in \R$ in \eqref{eq:goodvector},
and we denote by $\Int(B)$ the interior of a subset $B\subset \C^N$.
The Lee-Yang theorem is stated as follows:

\begin{theorem}[Lee-Yang theorem] \label{thm:Lee-Yang}
Let the random spin configuration \eqref{eq:measure} with Hamiltonian \eqref{eq:Hamiltonian}  satisfy any of the conditions  (a) -- (d) in Model \ref{model:spin}. Then for $\beta>0$, the partition function $\mathcal Z_{\beta,\Lambda}(\{\bm h_x\}_{x\in\Lambda})$  in \eqref{eq:partition_function} does not vanish whenever $\bm{h}_x \in \Int(\Omega^N_0)$ for every $x \in \Lambda$.
\end{theorem}

For detailed discussion on Lee-Yang theorem, we refer the reader to the articles \cite{FR,LS,New}, and the references therein.

\subsection{The no-zeros condition for the total spin}

Herein we establish that the desired no-zeros condition for the characteristic function of the total spin $\bm S_\L$ for ferromagnetic systems at positive temperature $\beta^{-1}$. 

\begin{lemma} \label{lem:no_zeros}
Let the random spin configuration \eqref{eq:measure} with Hamiltonian \eqref{eq:Hamiltonian}  satisfy any of the conditions  (a) -- (d) in Model \ref{model:spin}. 
Let $\bm \alpha$ be any unit vector in $\R^N$.
Then there exists $r>0$ (not depending on $\L$ and $\bm\alpha$) such that $\E[\exp(u \langle \bm S_\L, \bm\alpha \rangle ) ]$ does not vanish whenever $u \in \overline{\mathbb D(0,r)}$.
\end{lemma}

\begin{proof}[Proof of Lemma \ref{lem:no_zeros}]
The three integrals below are with respect to the measure $\prod_{x\in\Lambda} \d \mu_0(\bm\sigma_x)$
\begin{align*}
& \E[\exp(u \langle \bm S_\L, \bm\alpha \rangle)] \\
=& {\int_{(\R^N)^{\otimes |\Lambda|}} \exp \Big (u\sum_{x \in \Lambda} \langle \bs_x,\bm \alpha\rangle \Big) 
\exp (-\beta H_{\Lambda}(\bm \sigma_{\Lambda}) )  \over \mathcal{Z}_{\beta,\Lambda}(\{\bm h_x\}_{x\in\Lambda}) } \\
=& {\int_{(\R^N)^{\otimes |\Lambda|}} \exp \Big (u\sum_{x \in \Lambda} \langle \bs_x,\bm \alpha\rangle \Big) 
\exp\Big (\beta\sum_{(x,y) \subset \Lambda} \sum_{i=1}^N J_{xy}^i\sigma_x^i\sigma_y^i + \beta \sum_{x \in \Lambda} \langle \bs_x, \bm h_x \rangle \Big )  \over \mathcal{Z}_{\beta,\Lambda}(\{\bm h_x\}_{x\in\Lambda}) } \\
=& {\int_{(\R^N)^{\otimes |\Lambda|}} \exp\Big (\beta\sum_{(x,y) \subset \Lambda} \sum_{i=1}^N J_{xy}^i\sigma_x^i\sigma_y^i 
+ \beta \sum_{x \in \Lambda} \langle \bs_x, \bm h_x + \beta^{-1} u \bm \alpha \rangle\Big )  
\over \mathcal{Z}_{\beta,\Lambda}(\{\bm h_x\}_{x\in\Lambda}) } \\
=& \frac{\mathcal{Z}_{\beta,\Lambda}(\{\bm h_x + \beta^{-1} u \bm \alpha\}_{x\in\Lambda})}{\mathcal{Z}_{\beta,\Lambda}(\{\bm h_x\}_{x\in\Lambda})} \cdot \numberthis \label{eq:ratio_partition}
\end{align*}

By \eqref{eq:ratio_partition}, $\E[\exp(u \langle \bm S_\L, \bm\alpha \rangle ) ] =0$ if and only if $\mathcal{Z}_{\beta,\Lambda}(\{\bm h_x + \beta^{-1} u \bm \alpha\}_{x\in\Lambda})=0$. 
Denote by $\bm v_x:= \bm h_x + \beta^{-1} u \bm\alpha$ for each site $x\in \L$, we then have
\begin{eqnarray*}
\Re  v_x^1- \sum_{j=2}^N |v_x^j| &=& \Re (h_x^1 + \beta^{-1}u\alpha_1) - \sum_{j=2}^N |h_x^j + \beta^{-1}u\alpha_j| \\
&\geq& \Re h^1_x - \beta^{-1}|u||\alpha_1| -  \sum_{j=2}^N |h_x^j| - \sum_{j=2}^N \beta^{-1}|u||\alpha_j| \\
&=& \Big (\Re h^1_x - \sum_{j=2}^N |h^j_x| \Big ) - \beta^{-1}|u| \sum_{j=1}^N |\alpha_j| \\
&\ge& \del - \beta^{-1} |u| \sqrt{N}.
\end{eqnarray*}
Hence, there exists $r>0$ (depending only on $\del, \beta$ and $N$) such that 
$  \Re v^1_x - \sum_{j=2}^N |v^j_x| >0$  whenever  $|u|\leq r$.
This implies $\bm v_x \in \Int(\Omega^N_0)$ for every $x\in \L$. By the Lee-Yang theorem \ref{thm:Lee-Yang}, $\E[\exp(u \langle \bm S_\L, \bm\alpha \rangle ) ]$ is zero-free on $\overline{\mathbb D(0,r)}$.
\end{proof}

\begin{remark} \label{rem:spin}
In the setting of a uniform magnetic field $\bm{h}$ and $\bm \alpha = \bm e_1:=(1,0,\ldots,0)$ (for simplicity), we observe in the proof of Lemma \ref{lem:no_zeros} that for $\Psi_{\langle \bm S_\L, \bm \alpha \rangle} (u) := \mathbb E[ e^{iu \langle \bm S_\L, \bm \alpha \rangle}] \neq 0$,  we need $\bm{h} + \beta^{-1}iu \bm{e_1} \in \Int(\Omega^N_0)$, or equivalently, $\Re{h^1}  - \b^{-1} \Im{u} > \sum_{i=2}^N |h^i|$.  Clearly, this requires us to be able to choose $|\Im(u)|$ to be small. In particular, $u$ cannot be allowed to vary over an infinite vertical strip in $\C$.
\end{remark}

%\begin{remark} \label{rem:spin}
%In the setting of a uniform magnetic field $\bm{h}$ (for simplicity), observe in the proof of Lemma \ref{lem:no_zeros} that  we need $\bm{h} + \beta^{-1}iu \bm{e_1} \in \Omega_N^+ $; equivalently $\Re{h^1}  - \b^{-1} \Im{u} > \sum_{i=2}^N |h^i|$.  Clearly, this requires us to be able to choose $|Im(u)|$ to be small; in particular, $u$ cannot be allowed to vary over an infinite vertical strip in $\C$.
%\end{remark}

\subsection{Variance growth}
In this section, we demonstrate variance growth conditions for $\langle \bm S_\L, \bm \alpha \rangle$, which ensure asymptotics normality of $\langle \bm S_\L, \bm \alpha \rangle$.

\subsubsection{Upper bound on \texorpdfstring{$\langle \bm S_\L, \bm \alpha \rangle$}{<S_L, alpha>}} \label{s:ubound_spin}

For a compactly supported spin distribution $\mu_0$, there exists $M>0$ (depending only on $\mu_0$) such that $\|\bm{\sigma}_x\| \le M$ a.s. for every $x\in \L$. In view of this, we easily bound
$$|\langle \bm S_\L, \bm \alpha \rangle|
 = \Big |\sum_{x \in \Lambda} \langle \bs_x, \bm \alpha \rangle \Big | 
 \le \sum_{x\in \L} \|\bm \sigma_x\|  \le |\Lambda| \cdot M , \quad \text{ almost surely.}$$
 
The hypothesis of a compactly supported spin distribution $\mu_0$ is valid for most natural models, including the Ising model, the XY model and the classical Heisenberg model (where the spin measure $\mu_0$ is supported on $\mathbb{S}^0,\mathbb{S}^1$, and $\mathbb{S}^2$ respectively).

\subsubsection{Lower bound on the variance of \texorpdfstring{$\langle \bm S_\L, \bm \alpha \rangle$}{<S_L, alpha>}} \label{s:lbound_spin}
Now we discuss how to obtain lower bounds on the variance growth rate of $\langle \bm S_\L, \bm \alpha \rangle$ that scale as some power $|\Lambda|^\epsilon$ of the total system size (i.e. volume) $|\Lambda|$. To this end, we have the following lemma.

\begin{lemma} \label{lem:local_spin_var}
Consider a free boundary condition spin system satisfying any of the conditions  (a) -- (d) in Model \ref{model:spin}.
For each site $x\in \Lambda$, denote by $\partial x$ the set of all vertices in $\Lambda$ that are connected by an edge to $x$. Let $\bm \alpha\in \mathbb S^{N-1}$ be a fixed unit vector.
Then
\begin{enumerate}
\item[(i)] There exists a constant $c>0$ (not depending on $\Lambda$) such that
$$ \var[\langle \bm\sigma_x,\bm\alpha \rangle | \bs_y, y\in \partial x] \geq c, \quad \forall x \in \Lambda.$$
\item[(ii)] $\var[\langle \bm S_\L, \bm \alpha \rangle] \gtrsim |\Lambda|$.
\end{enumerate}
\end{lemma}

\begin{proof}

\noindent
(i) Given the spins on $\partial x$, the conditional distribution for the spin at the site $x$ has the following structure
\begin{equation} \label{eq:spin-conditional-dist}
\mathbb P_{\Lambda} ( \bm \sigma_x| \bm \sigma_{y}, y\in \partial x) \propto \exp \Big ( \beta\sum_{i=1}^N \sigma^i_x \Big (h^i_x + \sum_{y\in \partial x} J^i_{xy} \sigma^i_y \Big) \Big ) \d \mu_0(\bm \sigma_x) = \exp(\langle \bs_x, \bm \gamma \rangle) \d \mu_0(\bs_x),
\end{equation}
where $\bm \gamma = (\gamma_1,\ldots,\gamma_N) \in \R^N$, $\gamma_i := \beta (h^i_x + \sum_{y\in \partial x} J^i_{xy} \sigma^i_y)$,  for $1\le i \le N$.

\medskip

We recall that the coupling constants and the external magnetic fields are assumed to be uniformly bounded, i.e., there exist constants $H, J > 0 $ (not depending on $\L$) such that $|h^i_x| \leq H$ and $ |J^i_{xy}| \leq J$ for every $x,y \in \L$. 
Thus, there exists $M>0$ (not depending on $\L$) such that $\|\bm \gamma\| \le M$. 

\medskip

By \eqref{eq:spin-conditional-dist}, one has
$$\var[\langle \bm\sigma_x,\bm\alpha\rangle |\bm \sigma_{y}, y\in \partial x] = {f(\bm \gamma) \over h(\bm \gamma)} - \Big ({g(\bm \gamma) \over h(\bm \gamma)} \Big )^2,$$
where the functions $f(\bm u),g(\bm u)$ and $h(\bm u)$ are defined for $\bm u=(u_1,\ldots,u_N) \in \R^N$ as follows
\begin{align*}
f(\bm u) &:=  \int_{\mathbb R^N} \langle \bm\sigma_x,\bm\alpha\rangle^2 \exp (\langle \bs_x, \bm u\rangle ) \d \mu_0(\bm \sigma_x),  \\
g(\bm u) &:= \int_{\mathbb R^N} \langle \bm\sigma_x,\bm\alpha\rangle \exp (\langle \bs_x, \bm u\rangle ) \d \mu_0(\bm \sigma_x),  \\
h(\bm u) &:= \int_{\mathbb R^N}  \exp (\langle \bs_x, \bm u\rangle ) \d \mu_0(\bm \sigma_x).
\end{align*}
Since $\mu_0$ is compactly supported, $f(\bm u) , g(\bm u)$ and $h(\bm u)$ are finite and continuous on $\R^N$.
Let
$F(\bm u):= (f(\bm u) h(\bm u) - g(\bm u)^2)/ h(\bm u)^2.$ It follows that $F(\bm u)$ is also continuous. Moreover, by the Cauchy-Schwarz inequality, we have
$$f(\bm u)h(\bm u) \geq \Big (\int_{\R^N} |\langle \bs_x, \bm\alpha \rangle | \exp(\langle \bs_x, \bm u\rangle ) d\mu_0(\bs_x) \Big )^2 \geq g(\bm u)^2.$$
Moreover, for $f(\bm u) h (\bm u) - g(\bm u)^2 = 0$, we must have
$\langle \bm\sigma_x,\bm\alpha\rangle =$  constant $\mu_0$-a.s., which is not possible in our model. Hence, $F(\bm u)>0$ for all $\bm u \in \mathbb R^N$.
Let
$c:= \min_{ \|\bm u\| \leq M} F(\bm u)>0,$
then $c$ does not depend on $\L$, and we have
\[\var[\langle \bm\sigma_x,\bm\alpha\rangle| \bm \sigma_{y}, y\in \partial x] = F(\bm \gamma) \geq c>0.\]

\medskip
\noindent
(ii) We decompose $\L = \L_E \cup \L_O$, where
$$\Lambda_E:= \{x=(x_1,\ldots,x_d)\in \Lambda: x_1+\ldots+x_d \text{ is even}\}$$ 
$$\Lambda_O:= \{x=(x_1,\ldots,x_d)\in \Lambda: x_1+\ldots+x_d \text{ is odd}\}.$$
For any vertex $x\in \Lambda_E$, we have $\partial x \subset \Lambda_O$. Thus, the conditional distribution for the spins on $\L_E$ given the spins on $\L_O$ has the following structure
\begin{eqnarray*}
\mathbb P_\Lambda(\bs_x, x\in \Lambda_E|\bs_y, y\in \Lambda_O) &\propto& \exp \Big (\beta \Big (\sum_{x\in \Lambda_E} \sum_{y\in\partial x} \sum_{i=1}^N J^i_{xy} \sigma_x^i \sigma_y^i + \sum_{x\in \Lambda_E} \sum_{i=1}^N h^i_x \sigma^i_x\Big ) \Big ) \prod_{x\in \Lambda_E} \d \mu_0(\bs_x) \\
&=& \prod_{x\in \Lambda_E} \exp \Big (\beta \Big ( \sum_{y\in\partial x} \sum_{i=1}^N J^i_{xy} \sigma_x^i \sigma_y^i +  \sum_{i=1}^N h^i_x \sigma^i_x\Big ) \Big ) \d \mu_0(\bs_x).
\end{eqnarray*}
This implies that, given the spins $\{\bs_y\}_{ y\in \Lambda_O}$, the spins $\{\bs_x\}_{ x\in \Lambda_E}$ are independent. 

By the law of the total variance, one has
\begin{eqnarray*}
\Var[ \langle \bm S_\L, \bm \alpha \rangle ] &\geq& \mathbb E[\var[\langle \bm S_\L, \bm \alpha \rangle | \bs_y, y\in \Lambda_O]] \\
&=& \mathbb E\Big [\var\Big [ \sum_{x\in \Lambda} \langle \bm\sigma_x,\bm\alpha\rangle | \bs_y, y\in \Lambda_O \Big ]\Big ] \\
&=&  \mathbb E\Big [\var\Big [ \sum_{x\in \Lambda_E} \langle \bm\sigma_x,\bm\alpha\rangle | \bs_y, y\in \Lambda_O \Big ]\Big ] \\
&=&  \sum_{x\in \Lambda_E} \mathbb E [\var[ \langle \bm\sigma_x,\bm\alpha\rangle | \bs_y, y\in \Lambda_O]] \\
&\geq& c \cdot |\Lambda_E| \approx {c\over 2} \cdot |\Lambda|.
\end{eqnarray*}
\end{proof}

\subsection{Proof of Theorem \ref{thm:CLT_spin}}
\begin{proof}[Proof of Theorem \ref{thm:CLT_spin}]
Herein, we put together the various ingredients developed in the earlier subsections, and complete the proof of Theorem \ref{thm:CLT_spin}. 
Given an unit vector $\bm \alpha$, we want to show a CLT for $\langle \bm S_\L, \bm \alpha \rangle$ as $|\L|\uparrow \infty$.
In view of Corollary \ref{c:CLT}, for every $\L$ large enough, we need to specify $r_\L$ such that the characteristic function   $\Psi_{\langle \bm S_\L, \bm \alpha \rangle}(u) := \mathbb E[\exp(iu \langle \bm S_\L, \bm \alpha \rangle)]$ does not vanish on $\ol{\D(0,r_\L)}$; furthermore, we need to verify the growth condition in that corollary.

Thanks to Lemma \ref{lem:no_zeros}, there exists $r>0$, not depending on $\Lambda$, such that $\Psi_{\langle \bm S_\L, \bm \alpha \rangle}(u)$ is holomorphic and zero-free on $\overline{\mathbb D(0,r)}$, for every $\Lambda$.  
Thus, it remains to verify the growth condition 
$$\lim_{|\Lambda| \uparrow \infty} {\log^+ \log |\mathbb E[e^{r|\langle \bm S_\L, \bm \alpha \rangle|}]| \over \sqrt{\var[\langle \bm S_\L, \bm \alpha \rangle]}} = 0.$$

 To this end, from Section \ref{s:ubound_spin} we have $|\langle \bm S_\L, \bm \alpha \rangle| \lesssim |\Lambda|$, which implies
 $$\log^+ \log |\mathbb E[e^{r|\langle \bm S_\L, \bm \alpha \rangle|}]| \lesssim \log |\Lambda|.$$
From Lemma \ref{lem:local_spin_var}, we have $\sqrt{\var[\langle \bm S_\L, \bm \alpha \rangle]} \gtrsim |\Lambda|^{1/2}$.
By applying these bounds in the CLT convergence rate \eqref{eq:CLT_rate}, we conclude that the Kolmogorov-Smirnov distance between the normalized random variable $(\langle \bm S_\L, \bm \alpha \rangle - \mathbb E[\langle \bm S_\L, \bm \alpha \rangle]) / \sqrt{\langle \bm S_\L, \bm \alpha \rangle}$ and a standard Gaussian decays as $O({\log |\L|} \cdot |\L|^{-1/2})$.
\end{proof}

\subsection{CLTs for spin systems with boundary conditions}

We recall the definition of spin systems with boundary conditions from Section \ref{s:appl_spin}; in particular, let $\bs_\L$ be a spin configuration with the boundary condition $\mathcal V$, with the Hamiltonian $H_\L^{\mathcal V}$ \eqref{eq:cond-Hamiltonian} and the spin measure $\P_\L^{\mathcal V}(\bs_\L)$ \eqref{eq:cond-measure}. With Theorem \ref{thm:CLT_spin} in hand, now we are able to prove Theorem \ref{thm:CLT_spinb}. 
\begin{proof}[Proof of Theorem \ref{thm:CLT_spinb}]
Observe that
\begin{eqnarray*}
H_\L^{\mathcal V}(\bs_\L) &=& - \sum_{(x,y) \subset \L'} \sum_{i=1}^N J^i_{xy} \s_x^i\s_y^i - \sum_{x\in \L} \sum_{i=1}^N h^i_x \s^i_x \\
 &=&- \Big (\sum_{(x,y) \subset \L} \sum_{i=1}^N J^i_{xy} \s_x^i\s_y^i - \sum_{x\in \L} \sum_{i=1}^N h^i_x \s^i_x \Big ) - \sum_{x\in\L, y \in \partial \L, (x,y)\subset \L'} \sum_{i=1}^N J^i_{xy} \s_x^i v_y^i. 
\end{eqnarray*}
For each $x\in \L$ and $i\in \{1,\ldots,N\}$, let
$$\tilde h_x^i := h_x^i + \sum_{y\in \partial \L, (x,y)\subset \L'} J^i_{xy} v^i_y,$$
with the convention that the sum is zero whenever the index set is empty. Then, we can identify $\bs_\L$ with a free boundary condition spin system $\tilde \bs_\L$ defined on the same graph $\L$, with the same coupling constants $J_{xy}^i$, and with new external magnetic fields $\tilde {\bm h}_x:=(\tilde h^1_x,\ldots,\tilde h^N_x)$ at each site $x\in \L$.

Now observe that for every $x\in \L$
\begin{eqnarray*}
\Re \tilde h^1_x - \sum_{i=2}^N |\tilde h^i_x| &\ge& \Big (\Re h^1_x - \sum_{i=2}^N |h^i_x| \Big ) + \sum_{y\in \partial \L, (x,y)\subset \L'} \Big (J^1_{xy} v^1_y - \sum_{i=2}^N |J^i_{xy}|| v^i_y| \Big ) \\
&\ge&  \Big (\Re h^1_x - \sum_{i=2}^N |h^i_x| \Big ) + \sum_{y\in \partial \L, (x,y)\subset \L'} J^1_{xy} \Big ( v^1_y - \sum_{i=2}^N |v^i_y| \Big )\\
&\ge&  \Big (\Re h^1_x - \sum_{i=2}^N |h^i_x| \Big ).
\end{eqnarray*}
Applying Theorem \ref{thm:CLT_spin} to this new spin system gives our desired result.
\end{proof}

\section{Fluctuation theory for linear statistics of $\alpha$-determinantal processes}
\label{s:alpha_det}

In this section, we will undertake a detailed study of $\a$-determinantal processes in the context of our approach to CLTs based on zeros of characteristic functions. We will work in the setting of Section \ref{s:appl_det} in general, and Model \ref{model:det} in particular.

\subsection{The characteristic function of linear statistics} \label{s:analytic_continuation}
We begin with an expression for the Laplace transform of $\Lambda(\varphi_L)$ in terms of Fredholm determinants that is valid for bounded and compactly supported test functions $\ph$ and complex arguments $u$ that are small enough. This is available in the literature \cite{ST,Bac}.
%\begin{equation} \label{eq:Fredholm}
%\E[\exp(-u\Lambda(\varphi))]=\Det[I + \alpha K_{\varphi,u} ]^{- \frac{1}{\alpha}},
%\end{equation}
%where $\Det$ is the Fredholm determinant, $I$ is the identity operator on $L^2(\mu)$ and $K_{\Phi_L}$ is the integral operator given by the kernel
%$$K_{\varphi,u}(x,y)=\sqrt{(1-\exp(-u\varphi(x)))} K(x,y) \sqrt{(1-\exp(-u\varphi(y)))}$$
% for $u \ge 0$; notice that $\sqrt{(1-\exp(-u\varphi(x)))}$ is well-defined $\forall ~x$ for the function $\varphi\ge 0$. \eqref{eq:Fredholm} is for $\a \ne 0$; for $\a=0$ we refer to Eq. (1.4) in \cite{ST} which states that
% \begin{equation} \label{eq:Fredholm0}
%\E [ \exp(-u \Lambda(\varphi_L))] = \exp (-\Tr[K_{\varphi,u} ]),
%\end{equation}
% (the same may also be obtained by letting $\a \to 0$ in that \eqref{eq:Fredholm}).
%
%\begin{equation} \label{eq:Fredholm-2}
% \E [ \exp(-u \Lambda(\varphi_L))] = \exp (-\Tr[K_{\Phi_L}]),
%\end{equation}

To this end, we set up some notations. Let $L^2(\mu)$ be the space of $\C$-valued, square-integrable functions with respect to $\mu$ on $\R^d$. For a kernel $K: \R^d \times \R^d \rightarrow \C$, the associated integrable operator is defined by
\[ f(x) \mapsto \int_{\R^d} K(x,y) f(y) \d \mu(y), \quad f \in L^2(\mu). \]
By abuse of notation, we denote that operator by the same symbol $K$. For a bounded function $g:\R^d \rightarrow \C$, we define the multiplication operator induced by $g$, denoted by $M_g$, by
\[ M_g: L^2(\mu) \rightarrow L^2(\mu)\quad,\quad f(x) \mapsto g(x)f(x).\]
Now we can state:

\begin{proposition} \label{prop:Laplace_det}
Let $X$ be the $\a$-determinantal process as in Theorem \ref{thm:CLT_det}, and 
let $\ph$ be a bounded, compactly supported function on $\R^d$ with $\supp(\ph) =: B$. For each $u\in \C$, we define a function 
\[g_u(x):=1-\exp(-u\varphi(x)),\]
and let $M_{g_u}$ be the multiplication operator induced by the function $g_u$. Let $K_B$ be the operator compression of $K$ given by 
\[K_B(\cdot,\cdot)=\ind_B(\cdot) K(\cdot, \cdot) \ind_B(\cdot).\]
Then there exists $r>0$ (depending only on $\|\varphi\|_\infty$) such that for every $u \in \overline{\mathbb D(0,r)}$
%, the integral operator $K_{\Phi_L}$ may be written as the operator composition
%$K_{\varphi,u} = M_{\varphi,u}^{1/2} K M_{\varphi,u}^{1/2}$. Using the determinantal identity $\Det(I+AB)=\Det(I+BA)$, we may then write
\begin{equation} \label{eq:Fredholm-2}
\E[\exp(-u\Lambda(\varphi))] = \Det[I + \alpha M_{g_u} K ]^{- \frac{1}{\alpha}} = \Det[I + \alpha M_{g_u} K_B ]^{- \frac{1}{\alpha}},
\end{equation}
where $\Det$ is the Fredholm determinant and $I$ is the identity operator on $L^2(\mu)$.  Further, the Laplace transform is holomorphic in $u$ on $\D(0,r)$.
\end{proposition}

\begin{proof}[Proof of Proposition \ref{prop:Laplace_det}]

By Theorem 1.5 in \cite{ST} (see Theorem \ref{thm:ST} below), for $\|u \varphi \|_\infty = |u| \|\varphi\|_\infty$ sufficiently small, we have
$$\E[\exp(-u\Lambda(\varphi))] = \Det [I + \alpha M_{g_u} K ]^{-1/\alpha}.$$
 By Theorem 2.4 in \cite{ST} (for a concrete statement, see Theorem \ref{thm:FD_ST} below), we may deduce that as soon as the operator $\a M_{g_u} K$ satisfies $\|\a M_{g_u} K\|_{\rm op}<1$, we may write
\begin{align*}
\Det[I + \a M_{g_u} K]^{-1/\a}   &=  \sum_{k=0}^{\infty} \frac{1}{k!} \Big (\int^{\otimes k}   \Det_\a[\l(-g_u(x_i) K(x_i,x_j)\r)_{i,j=1}^k]   \d\mu(x_1) \ldots \d\mu(x_k) \Big ) \\
&=  \sum_{k=0}^{\infty} \frac{1}{k!} \Big (\int^{\otimes k}   \Det_\a[\l(-g_u(x_i) K_B(x_i,x_j)\r)_{i,j=1}^k]  \d\mu(x_1) \ldots \d\mu(x_k) \Big ) \\
&= \Det[I+\alpha M_{g_u}K_B]^{-1/\alpha},
\end{align*}
here we have used the fact that the function $g_u(\cdot)$ is supported on $B=\supp(\ph)$, and hence the term $\Det_\alpha[(-g_u(x_i)K(x_i,x_j))_{i,j}^k]$ is non-zero only when $x_i \in B, \forall i$.

For $|u|$ small enough (depending on $\|\varphi\|_\infty$), say $|u|\leq r_0$, one may have 
$$\|M_{g_u}\|_\op = \|1-\exp(-u\varphi(\cdot))\|_\infty \le {1\over 2} |\a|^{-1} \|K\|_{\rm op}^{-1}.$$
For such $u$,
$ \| \alpha M_{g_u} K \|_{\rm op} \leq |\alpha| \|M_{g_u}\|_{\op} \|K\|_{\rm op} < {1\over 2}.$
Consequently, 
$$\E[\exp(-u\Lambda(\varphi))] = \Det [I + \alpha M_{g_u} K ]^{-1/\alpha} =  \Det [I + \alpha M_{g_u} K_B ]^{-1/\alpha} .$$
Note that the operator $K_B$ is of trace class since $K$ is of locally trace class and $B$ is compact. Since $M_{g_u}$ is a bounded operator, we deduce that the operator $ \alpha M_{g_u} K_B$ is of trace class, which implies that $\Det[I + \alpha M_{g_u} K_B ]$ is finite. On the other hand, the only way that $\Det [I + \alpha M_{g_u} K_B]$ can be $0$ is for one of its eigenvalues to be equal to $0$, which cannot happen because all the eigenvalues of $\alpha M_{g_u} K_B$ are of modulus bounded away from $1$. This implies that for $|u|\leq r_0$, the quantity $\Det[I + \a M_{g_u} K_B]^{-1/\a}$  is finite and has no singularities.
%We demonstrate in the following two subsections that the right and left hand sides of \eqref{eq:Fredholm-2} are finite and have no singularities in a small neighbourhood $U$ of $0$ (that depends only on $\|\ph\|_\infty$).
%Then we may extend \ref{eq:Fredholm-2} to $u \in U$ via analytic continuation.

For the holomorphicity of the Laplace transform in a neighbourhood of $0$, let $|\varphi|$ be the function defined by $|\varphi|(x) := |\varphi(x)|$. Applying the arguments above for $|\varphi|$, we deduce that there exists $r_1 >0$ depending only on $\|\varphi\|_\infty$ such that
$$\E[\exp(-u\Lambda(|\varphi|))] = \Det [I + \alpha M_{f_u} K_B]^{-1/\alpha} \quad \text{ whenever } |u| \leq r_1$$
where $f_u(x) := 1 - \exp(-u|\varphi(x)|)$. In particular, this implies 
$\E[\exp(r_1\L(|\varphi|))]$ is finite.
Hence,
$ \E[\exp(r_1|\L(\varphi)|)]  \leq \E[\exp(r_1\L(|\varphi|))] < \infty.$
Thus $ \E[\exp(u\L(\varphi))]$ is a holomorphic function in $u$ on $\D(0,r_1)$. Setting $r:= \min(r_0,r_1)$ completes the proof.
\end{proof}

We add here a statement of Theorem 1.5 and Theorem 2.4 in \cite{ST} that was used in the above proof.

\begin{theorem}[Theorem 1.5 in \cite{ST}] \label{thm:ST}
Let the kernel $K$ satisfy condition (i) and (ii) in Theorem \ref{thm:CLT_det}. Then we have
$$\E[\exp(-\L(f))] = \Det [I+ \a M_f K]^{-1/\a}$$
for any complex-valued, bounded, compactly supported test function $f$ with $\|f\|_\infty$  is sufficiently small, here $M_f$ denotes the multiplication operator by the function $1-e^{-f}$, acting on $L^2(\mu)$.
\end{theorem}

\begin{theorem}[Theorem 2.4 in \cite{ST}] \label{thm:FD_ST}
Let $J$ be a trace class integral operator on $L^2(\mu)$. If $\| \a J \|_\op < 1$, we have 
\[  (\Det[I - \a J])^{-1/\a}  = \sum_{n=0}^\infty \frac{1}{n!} \int^{\otimes n}  \Det_\a[\l( J(x_i,x_j)\r)_{i,j=1}^n]  \ \d\mu(x_1) \ldots \d\mu(x_n). \]
\end{theorem}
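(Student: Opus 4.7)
The plan is to prove the identity by reducing both sides to the same exponential generating function in the power sums $\Tr[J^k]$, using the Plemelj--Smithies trace--log formula on the left and the cycle decomposition of permutations on the right. This mirrors the classical derivation of the Fredholm expansion of $\Det[I-J]$, adapted to the $\a$-determinantal modification.

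First I would invoke the Plemelj--Smithies identity for trace class operators: under the hypothesis $\|\a J\|_\op < 1$ and $J$ trace class,
\[ \log \Det[I - \a J] = - \sum_{k=1}^\infty \frac{\a^k}{k}\Tr[J^k], \]
the series converging absolutely via the elementary bound $|\Tr[J^k]| \le \|J\|_1 \|J\|_\op^{k-1}$. Raising to the power $-1/\a$ along the principal branch (well defined since $\Det[I-\a J]$ stays in a small neighbourhood of $1$ under our hypothesis), one obtains
\[ (\Det[I - \a J])^{-1/\a} = \exp\Bigl(\sum_{k=1}^\infty \frac{\a^{k-1}}{k}\Tr[J^k]\Bigr). \]
I would then expand this exponential multiplicatively as a sum indexed by finitely-supported sequences $(m_k)_{k\geq 1}$ of non-negative integers,
\[ \exp\Bigl(\sum_{k\geq 1} \frac{\a^{k-1}}{k} \Tr[J^k]\Bigr) = \sum_{(m_k)}\prod_{k\geq 1} \frac{1}{m_k!}\Bigl(\frac{\a^{k-1}}{k}\Bigr)^{m_k}(\Tr[J^k])^{m_k}. \]

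Next I would group terms by $n := \sum_k k\, m_k$ and match them with $\Det_\a$ via permutation cycle types. The number of permutations of $S_n$ whose cycle type has $m_k$ cycles of length $k$ equals $n!/\prod_k k^{m_k}m_k!$, and for such a $\s$ one has $n-\nu(\s) = \sum_k (k-1)m_k$, so $\a^{n-\nu(\s)} = \prod_k \a^{(k-1)m_k}$. Moreover Fubini gives
\[ \int_{\Xi^n} \prod_{i=1}^n J(x_i, x_{\s(i)})\, d\mu(x_1)\cdots d\mu(x_n) = \prod_k (\Tr[J^k])^{m_k}, \]
since the variables in each length-$k$ cycle decouple and contribute an independent factor of $\Tr[J^k]$. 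Multiplying these three ingredients and summing over $(m_k)$ with $\sum km_k = n$ reconstructs exactly $(1/n!)\int \Det_\a[(J(x_i,x_j))_{i,j=1}^n]\, d\mu^{\otimes n}$, and summation on $n\geq 0$ closes the identity.

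The main obstacle I expect is justifying absolute convergence and the interchange of sum, product and integral in the double expansion of Step 2. I would handle this by first verifying the identity for finite-rank $J$, where every series collapses to a finite sum and both sides can be compared term-by-term, and then passing to the limit for trace-class $J$ using the joint trace-norm continuity of $\Det[I-\a J]$ (standard for Fredholm determinants) together with a dominated-convergence argument for the right-hand side, whose series is majorised by $\sum_n |\a|^n \|J\|_1^n/n!$ up to permanent-type factors bounded in terms of $\|J\|_1$ and $\|J\|_\op$. Once finite-rank approximants are handled, the general trace-class case follows.
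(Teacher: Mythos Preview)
The paper does not prove this theorem at all; it is quoted verbatim from Shirai--Takahashi \cite{ST} as an external input and used as a black box in the proof of Proposition~4.1. So there is nothing to compare against on the paper's side.

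Your argument is the standard derivation and is essentially correct: matching the exponential expansion of $\exp\bigl(\sum_{k\ge 1}\alpha^{k-1}\Tr[J^k]/k\bigr)$ against the cycle-type decomposition of $\det_\alpha$ is exactly how this identity is proved in the literature. Two small points are worth tightening. First, the equality $\int_{\Xi^n}\prod_i J(x_i,x_{\sigma(i)})\,d\mu^{\otimes n}=\prod_k(\Tr[J^k])^{m_k}$ uses that the operator trace of $J^k$ coincides with the diagonal integral of its kernel; for $k\ge 2$ this is automatic (a product of Hilbert--Schmidt factors), but for $k=1$ it is a genuine hypothesis on $J$ (continuous kernel, or a Mercer-type condition), which Shirai--Takahashi impose and which you should state explicitly. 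Second, your majorant ``$\sum_n |\alpha|^n\|J\|_1^n/n!$ up to permanent-type factors'' is vague; the cleaner and sufficient bound is $\sum_{k\ge 1}|\alpha|^{k-1}|\Tr[J^k]|/k\le \|J\|_1\sum_{k\ge 1}\|\alpha J\|_\op^{k-1}/k<\infty$, which gives absolute convergence of the exponential series directly and legitimises all rearrangements without any finite-rank approximation step.
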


\begin{remark} \label{rem:det}
Let $M_{g_{-iu}}$ denote the multiplication operator by the function $1-\exp(iu \varphi(\cdot))$ acting on $L^2(\mu)$, we have 
$$\Psi_{\L(\varphi)}(u) = \E[\exp(iu \L(\varphi))] = \Det [I + \a M_{g_{-iu}}K]^{-1/\a}$$ 
provided that $\| \a M_{g_{-iu}} K \|_\op < 1$. We observe that the proof of Proposition \ref{prop:Laplace_det} entails $|u|$ small implying $\|1-e^{iu \ph(\cdot)}\|_\infty$ small, in turn implying $\|\a M_{g_{-iu}} K \|_{\rm op}<1$; this would imply non-vanishing of the Fredholm determinant. However, if $|\Im(u)|$ was large, $\|1-e^{iu \ph(\cdot)}\|_\infty$ would be large, and the above argument would break down. In particular, if $u$ is allowed to vary in an infinite vertical strip in $\C$, then it might be possible to choose $u$ such that $-1$ is an eigenvalue of $ \a M_{g_{-iu}} K $, and hence such $u$ would be a zero of the Fredholm determinant. This would lead to the vanishing of the characteristic function $\Psi_{\Lambda(\varphi)}$ for $\a<0$, and to its blow up for $\a>0$.
\end{remark}

Proposition \ref{prop:Laplace_det} implies the zero-freeness and holomorphicity of $\E[\exp(u\L(\varphi))]$ for any bounded, compactly supported function $\varphi$ on a small disc $\D(0,r)$, where $r$ depends only on $\|\varphi\|_\infty$. Combining this with the observation that $\|\varphi_L\|_\infty = \|\varphi(\cdot/L)\|_\infty = \|\varphi\|_\infty$ for every $L>0$, we deduce the following result:

\begin{prop} \label{prop:0-condition}
Let $X$ be an $\alpha$-determinantal process as in Theorem \ref{thm:CLT_det}, and $\varphi: \R^d \rightarrow \R$ be a bounded compactly supported function. Then there exists $r>0$ (depending only on $\|\varphi\|_\infty$) such that for every $L>0$, the characteristic function $\Psi_{\Lambda(\varphi_L)}(u)=\E[\exp(iu \Lambda(\varphi_L))]$ is holomorphic and nonzero whenever $u \in \D(0, r)$.
\end{prop}

\subsection{Growth rate of the moment-generating function of $\Lambda(\varphi_L)$}

In this section, we provide a result about the growth rate of the moment-generating function of $\Lambda(\varphi_L)$, which is necessary in our approach.

\begin{proposition} \label{prop:mgf-growth}
Let $X$ be an $\alpha$-determinantal process as in Theorem \ref{thm:CLT_det}. Let $\varphi :\R^d \rightarrow \R$ be a bounded compactly supported test function. There exist constants $r,c> 0$ such that for $L >0$ we have
$$ \E[\exp(r|\Lambda(\varphi_L)|)] \le \exp(c L^d).$$
\end{proposition}

\begin{proof}
 Without loss of generality, we can assume that $\supp(\varphi) \subset B(0,1)$, and hence $\supp(\varphi_L) \subset B(0,L)$ for every $L>0$. Let $\ind_{L}$ denote the indicator of the ball $B(0,L)$, we have
  $$|\Lambda(\varphi_L)| \le \L(\ind_{L}) \cdot \|\varphi\|_\infty.$$
Thus, it suffices to bound $\E[\exp(r \L(\ind_{L}))]$, for $r>0$ small enough.
 
Let $h_L(x):= 1 -\exp(r\ind_{L}(x)) = (1-e^r)\ind_{L}$ and denote by $M_{h_L}$ the multiplication operator by the function $h_L$ acting on $L(\mu)$. From Proposition \ref{prop:Laplace_det}, for $r>0$ small enough we have
$$  \E[\exp(r \L(\ind_{L}))] = \Det[I + \alpha M_{h_L} K]^{-1/\alpha}. $$
Let $\{\lambda_i\}_{i \ge 1}$ be the eigenvalues of the operator $\alpha K_{h_L}:= \alpha M_{h_L}K$. By choosing $r$ small enough, we may ensure that all the $\la_i$-s are smaller in modulus than $1/2$.
Using this, we may proceed as
\begin{align*}
|\E[\exp( r \L(\ind_{L}))]| =&  \exp\Big ( - \alpha^{-1} \log \Det [I + \alpha K_{h_L}] \Big ) 
=  \exp\Big ( - \alpha^{-1}  \sum_{i=1}^\infty\log (1 + \lambda_i) \Big ) \\ 
\le&   \exp\Big ( 2 |\alpha|^{-1}  \sum_{i=1}^\infty |\lambda_i| \Big )
\end{align*}
where we have used the fact that $0\le |\lambda_i| < 1/2$ and used  small parameter expansion for $\log (1+\la)$ for small $|\la|$.

We now investigate the $\lambda_i$-s, which are the eigenvalues of the operator $\alpha K_{h_L}$. For $r>0$, $h_L(x)$ is non-positive, which, coupled with the non-negativity of $K$, implies that the operator $K_{h_L}$ is negative semi-definite.  This implies in particular that all $\lambda_i$-s are of the same sign, hence
$$ \sum_{i=1}^\infty |\lambda_i| = |\Tr[\alpha K_{h_L}]|.$$
Since the operator  $\alpha K_{h_L}$ equals $\alpha(1-e^r)\ind_{L} K $, we have
$$\Tr[\alpha K_{h_L}] = \int \alpha K_{h_L}(x,x) \d \mu(x) =  \alpha(1-e^{r}) \int \ind_{L}(x) K(x,x) \d \mu(x) = \alpha ( 1 - e^r) \E[\L(\ind_{L})]. $$
Recall that $\E[\L(\ind_{L})] = \E[X(B(0,L))]\lesssim \Vol(B(0,L))$, thus
 $|\Tr[\alpha K_{h_L}]|  \lesssim L^d. $
Combining this with the argument above, we obtain $|\E[\exp(r\L(\ind_{L}))]|  \le \exp(cL^d)$ for $r>0$ small enough and for some $c>0$, as desired.
\end{proof}

\subsection{Proof of Theorem \ref{thm:CLT_det}}

\begin{proof}[Proof of Theorem \ref{thm:CLT_det}]

From Proposition \ref{prop:0-condition} and Proposition \ref{prop:mgf-growth} we deduce that there exist constants $r,c>0$ (not depending on $L$) such that  $\E[\exp(iu\Lambda (\varphi_L))]$ does not vanish in $\overline{\D(0,r)}$ and
$$\E[\exp(r| \Lambda (\varphi_L)|)] \leq \exp (cL^d) \text{ for all }L>0.$$

It remains to verify the growth condition in Corollary \ref{c:CLT}. From the inequality above, we have
$$\log^+\log \mathbb{E}[e^{r|\Lambda(\varphi_L)|}] \leq \log^+ \log \exp(cL^d) \leq \log^+c + d \log^+L.$$
Thus
$${1+ \log^+\log |\mathbb E[e^{r|\Lambda(\varphi_L)|}]| \over r\Var[\L(\varphi_L)]^{1/2}} \lesssim {\log L \over \Var[\L(\varphi_L)]^{1/2}} \overset{L\to\infty}{\longrightarrow} 0.$$
Using Corollary \ref{c:CLT}, the theorem follows.

\end{proof}

\subsection{Variance growth of $\Lambda(\varphi_L)$ for $\alpha>0$} \label{s:vargrowth}
Now we establish the variance growth condition for $\Lambda(\varphi_L)$, which is a necessary ingredient in our approach.
To this end, we recall that for any point process with one and two point correlation functions $\rho_1$ and $\rho_2$ respectively (with respect to background measure $\mu$) and any real-valued test function $\phi$, we have
\begin{equation}
\E[\Lambda(\phi)]=\int \phi(x) \rho_1(x) \d \mu(x)
\end{equation}
and
\begin{equation}
\E[\Lambda(\phi)^2]=\int \phi(x)^2  \rho_1(x) \d \mu(x)  + \iint \phi(x) \phi(y) \rho_2(x,y) \d \mu(x) \d \mu(y).
\end{equation}

For the $\alpha$-determinantal process, we have
$$\rho_1(x)=K(x,x) \quad , \quad \rho_2(x,y)=K(x,x)K(y,y)+\alpha |K(x,y)|^2.$$
Hence, by direct computation, we obtain
$$ \var[\Lambda(\varphi_L)] = \int \varphi_L(x)^2 K(x,x) \d \mu (x) + \alpha \iint \varphi_L(x) |K(x,y)|^2  \varphi_L(y)  \d \mu(x) \d \mu(y).$$
Now we are ready to state:

\begin{proposition} \label{prop:var-positive}
For $\alpha$-determinantal processes in Model \ref{model:det} (i) with $\alpha > 0$, for all $L>0$ large enough, we have
$$\var[\Lambda(\varphi_L)] \gtrsim L^d.$$
\end{proposition}

\begin{proof}
Since the kernel $K(\cdot ,\cdot)$ is positive semi-definite on $L^2(\mu)$, so is the kernel $|K(\cdot ,\cdot)|^2$. This follows from the so-called Schur product theorem for operators (e.g., see \cite{HJ}).
%This can be seen, e.g., by considering finite dimensional matrices $(K(x_i,x_j)^2)_{1\le i,j \le n}$. It is well-known that these  are positive semi-definite (see e.g. Bhatia's book on Matrix Analysis).
In particular,
%$$ \sum_{i,j=1}^n \varphi_L(x_i)K(x_i,x_j)^2 \varphi_L(x_j) \ge 0. $$
%We may then take limits as $n \to \infty$ with appropriate sequences of points $(x_i)$. In any case, the positivity of
$ \iint \varphi_L(x) |K(x,y)|^2  \varphi_L(y)  \d \mu(x) \d \mu(y) \ge 0.$
Hence, for $\alpha > 0$ we have
$$\var[\Lambda(\varphi_L)]  \ge \int \varphi_L(x)^2 K(x,x) \d \mu (x).$$
Since $\mu$ has a density $f$ with respect to the Lebesgue measure, which is assumed to be bounded from below by a positive constant, we may further write
$$ \var[\Lambda(\varphi_L)] \gtrsim \int \varphi_L(x)^2 K(x,x) \d x. $$

We recall the condition (i) in Model \ref{model:det}: for any $0<\delta<1$, there exist $m_\delta>0$ and $r>0$ such that
$$\Vol ( \mathcal{F}_{m_\delta} \cap C(x,r) ) \geq (1-\delta) \Vol (C(x,r)), \quad \forall x\in \R^d,$$
where $\mathcal{F}_{m_\delta}:= \{ y \in \R^d: K(y,y) \geq m_\delta \}$ and $C(x,r)$ is the cube centered at $x$ with side length $r$. From this assumption, we deduce that
\[\Vol ( \mathcal{F}_{m_\delta} \cap C(0,Nr) ) \geq (1-\delta) \Vol (C(0,Nr)), \quad \forall N\in \N_+.\]

Let $n_0\in \N_+$ be such that $\supp (\varphi) \subset C(0,n_0r)$. For $\varepsilon>0$, let 
$\Omega_{\varepsilon}:=\{x: |\varphi(x)| \geq \varepsilon\}.$
Since $\varphi$ is bounded and $\|\varphi \|_2 > 0$, there exists $\varepsilon >0 $ such that $\Vol(\Omega_{\varepsilon}) > 0$, and we define accordingly $\nu:= \Vol(\Omega_\varepsilon)/\Vol(C(0,n_0r))>0$. Since $L\cdot \Omega_\varepsilon \subset L \cdot \supp(\varphi) \subset C(0,Ln_0 r)$, we have
\begin{align*}
\Vol(\mathcal{F}_{m_\delta} \cap L \cdot \Omega_\varepsilon ) &= \Vol (\mathcal{F}_{m_\delta} \cap C(0,Ln_0r) \cap L \cdot \Omega_\varepsilon )\\
&\geq \Vol( \mathcal{F}_{m_\delta} \cap C(0,Ln_0r)) + \Vol(L \cdot \Omega_\varepsilon) - \Vol (C(0,Ln_0r))\\
&\geq \Vol( \mathcal{F}_{m_\delta} \cap C(0,{\lfloor L \rfloor}n_0r)) + \Vol(L \cdot \Omega_\varepsilon) - \Vol (C(0,Ln_0r))\\
&\geq (1-\delta)\Vol(C(0,{\lfloor L \rfloor}n_0r)) + \nu \Vol(C(0,Ln_0r)) - \Vol (C(0,Ln_0r))\\
&\geq \Big ((1-\delta)\Big ({L-1 \over L} \Big)^d + \nu - 1\Big ) \Vol(C(0,Ln_0r)).
\end{align*}
By choosing $\delta$ small enough, we deduce that $\Vol(\mathcal{F}_{m_\delta} \cap L \cdot \Omega_\varepsilon ) \gtrsim L^d$.
Thus,  for  $L$ large enough, we can further bound:
\begin{align*}
\int \varphi_L(x)^2 K(x,x) \d x &= \int \varphi (x/L)^2 K(x,x) \d x \\
 &\geq \int_{\mathcal{F}_{m_\delta} \cap L \cdot \Omega_\varepsilon} \varphi(x/L)^2 K(x,x) \d x \\
&\geq \varepsilon^2 m_\delta \cdot \Vol( \mathcal{F}_{m_\delta} \cap L \cdot \Omega_\varepsilon) \gtrsim L^d.
\end{align*}
This completes the proof.

\end{proof}

\begin{remark} \label{rem:weaker_cond}
While, for the sake of brevity, we establish Proposition \ref{prop:var-positive} as stated, we observe that the condition in Model \ref{model:det} (i) can be further weakened. In particular, . For that purpose, we can replace $m_\delta$ therein by $m_\delta \|x\|^{-L} \log^2 \|x\| c(\|x\|)$ for any function $c(\|x\|)$ going to infinity. To see this, we may use the same argument as in the proof of Proposition \ref{prop:var-positive}, but remove from $C(0,Ln_0r)$ a suitable cube of center $0$ and size $O(L)$. The estimate would then give $\log^2 L$ times a function $\to \infty$. It remains to observe that, in the context of the mgf growth bound in Proposition \ref{prop:mgf-growth}, we require a variance growth rate of only $\var[\L(\ph_L)]/\log^2 L \to \infty$ to deduce CLT via Theorem \ref{t:main-1}.
\end{remark}

\subsection{Variance growth of $\Lambda(\varphi_L)$ for $\alpha<0$}
Now we will discuss the case $\alpha<0$, which is more difficult and technical.
Recall that
\begin{equation} \label{eq:neg_var_start_0}
\var[\Lambda(\varphi_L)] = \int \varphi_L(x)^2 K(x,x) \d \mu (x) + \alpha \iint \varphi_L(x) |K(x,y)|^2  \varphi_L(y)  \d \mu(x) \d \mu(y).
\end{equation}
Since $\alpha<0$, we cannot simply neglect the second term, which is a source of difficulties in this setting.

\begin{lem} \label{lem:lb_variancedpp}
 Let $X$ be an $\alpha$-determinantal process with a Hermitian kernel $K$ and background measure $\mu$. Let $\varphi$ be any real-valued test function. Then
 \[ \var[\L(\varphi)] \ge (1 - |\alpha| \|K\|_\op) \int \varphi (x)^2 K(x,x) \d \mu(x) + \frac{|\alpha|}{2} \iint |\varphi(x)- \varphi(y)|^2 |K(x,y)|^2 \d\mu(y) \d\mu(x).\]
\end{lem}

\begin{proof}[Proof of Lemma \ref{lem:lb_variancedpp}]

We consider the operator composition $K^{\circ 2 } := K \circ K$, given by the integral kernel 
\[K^{\circ 2 }(x,y) = \int K(x,z)K(z,y) \d \mu(z).\] 
In particular, $K^{\circ 2 }(x,x)=\int |K(x,y)|^2 \d \mu(y)$ since $K$ is Hermitian. 
Moreover, 
\begin{equation} \label{eq:dom-1}
0 \preceq K^{\circ 2 } \preceq \|K\|_\op K, 
\end{equation}
where $\preceq$ denotes domination of operators in a positive semi-definite sense. 

For a bounded Borel set $E$, we denote by $\chi_E$ the characteristic function (i.e., the indicator function) of this set. Now we consider the inequality of local traces  (that is a consequence of \eqref{eq:dom-1}):
 \begin{equation} \label{eq:dom-2}
 0 \le \Tr [ \chi_E K^{\circ 2} \chi_E ] \le \|K\|_\op \cdot \Tr[ \chi_E K \chi_E ]
 \end{equation}
Writing out \eqref{eq:dom-2} as integrals, we obtain the following inequality, valid for any bounded Borel set $E$:
 \begin{equation} \label{eq:dom-3}
  \int_E \l( \int |K(x,y)|^2 \d \mu(y)  \r) \d \mu(x)  \le   \int_E \|K\|_\op \cdot  K(x,x) \d \mu(x).
 \end{equation}
 Since \eqref{eq:dom-3} is valid for every bounded Borel set $E$, we may deduce that 
 \begin{equation} \label{eq:dom-4}
  \int |K(x,y)|^2 \d \mu(y)  \le    \|K\|_\op \cdot  K(x,x)
 \end{equation}
holds for $\mu$-a.e. $x$.
Hence
\begin{align*}
& ~\int\varphi(x)^2 K(x,x) \d \mu (x)\\
=& ~(1-|\alpha|\|K\|_\op)\int\varphi(x)^2 K(x,x) \d \mu (x) + |\alpha|\|K\|_\op \int\varphi(x)^2 K(x,x) \d \mu (x) \\
 \ge & ~(1-|\alpha|\|K\|_\op)\int\varphi(x)^2 K(x,x) \d \mu (x) + |\alpha| \iint \varphi(x)^2 |K(x,y)|^2 \d \mu(y) \d \mu(x) \\
  = & ~(1-|\alpha|\|K\|_\op)\int\varphi(x)^2 K(x,x) \d \mu (x) + {|\alpha|\over 2} \iint (\varphi(x)^2 + \varphi(y)^2) |K(x,y)|^2 \d \mu(y) \d \mu(x).
\end{align*}

Using \eqref{eq:neg_var_start_0}, we can lower bound $\var[\L(\varphi)]$ by
\[
(1-|\alpha|\|K\|_\op)\int\varphi(x)^2 K(x,x) \d \mu (x) + {|\alpha|\over 2} \iint |\varphi(x)- \varphi(y)|^2 |K(x,y)|^2 \d \mu(y) \d \mu(x).
\]
\end{proof}

If $|\a|\|K\|_\op < 1$, Lemma \ref{lem:lb_variancedpp} implies the following lower bound
$$\var[\L(\varphi_L)] \gtrsim \int \varphi_L(x)^2 K(x,x) \d \mu (x).$$
With similar arguments as the case $\a>0$ we have the following result.

\begin{prop}
For $\alpha$-determinantal processes in Model \ref{model:det} (i) with $\alpha < 0$, for all $L>0$ large enough, we have
$$\var[\Lambda(\varphi_L)] \gtrsim L^d.$$
\end{prop}

However, for many kernel classes of interest, $|\a|\|K\|_\op = 1$ (for example, DPPs with projection kernels). In that case, the best lower bound for $\var[\L(\varphi_L)]$ we could obtain is
$$\var[\L(\varphi_L)] \gtrsim \iint |\varphi_L(x)- \varphi_L(y)|^2 |K(x,y)|^2 \d \mu(y) \d \mu(x).$$
Henceforth, we focus on lower bounding the double integral on the right hand side. 

By making a change of variables $u=x/L,v=y/L$, we may lower bound
\begin{align*}
&\iint |\varphi_L(x)-\varphi_L(y)|^2 |K(x,y)|^2 f(x) f(y) \d x \d y \\
 =  L^{2d} &\iint |\varphi(u)-\varphi(v)|^2 |K(Lu,Lv)|^2 f(Lu) f(Lv) \d u \d v \\
 \gtrsim   L^{2d} &\iint |\varphi(u)-\varphi(v)|^2 |K(Lu,Lv)|^2 \d u \d v,  \numberthis  \label{eq:int-1}
\end{align*}
where in the last step we use the hypothesis on the lower bound on the density $f$.

It now remains to lower bound the integral expression in \eqref{eq:int-1}. This will exploit the conditions (ii.a), (ii.b) or (ii.c) in Model  \ref{model:det}. The precise implementation of this programme will be taken up in Sections \ref{s:ii.a}, \ref{s:ii.b} and \ref{s:ii.c} respectively.

\subsubsection{Using hypothesis (ii.a) in Model \ref{model:det}} \label{s:ii.a}

\begin{prop}
For $\alpha$-determinantal processes in Model \ref{model:det} (ii.a), for all $L>0$ large enough, we have
$$\var[\Lambda(\varphi_L)] \gtrsim L^{d-4}.$$
\end{prop}

\begin{proof}
We recall the condition (ii.a) in Model 1.2: there exist positive constants $a,\delta$ such that
$|K(x,y)| \geq a$ whenever $\|x-y\|<\delta$.
Thus, we can further bound
\begin{equation} \label{eq:interm}
 L^{2d} \iint |\varphi(u)-\varphi(v)|^2 |K(Lu,Lv)|^2 \d u \d v \gtrsim L^{2d} \iint_{\|u-v\| < {\delta\over L}} |\varphi(u)-\varphi(v)|^2 \d u \d v.
\end{equation}

Let $B(x,r)$ be the ball with centre $x$ and radius $r$ in $\R^d$; for brevity we set $B=B(0,1)$. Let $\nu:= \pi^{d/2}/\Gamma (\frac{d}{2}+1)$ be the volume of $B$, and we denote by $\nu_L$ the volume of $B(0,\del/L)$. Thus, $\nu_L = (\delta/L)^d \nu$. 
We define the local average function $\bp$ by 
\[\bp(x):=\frac{1}{\nu_L} \int_{B(0,\del/L)} \varphi(x-u) \d u.\]

Set $\cb$ to the indicator function of $B$ and $\cbl$ to be that of $B(0,\del/L)$. We then have
\begin{align*}
&  \iint_{\|u-v\| < {\delta\over L}} |\varphi(u)-\varphi(v)|^2 \d u ~\d v \\
  = \:  & \nu_L \int_{\R^d} \Big ( \frac{1}{\nu_L} \int_{\|u-v\| < {\delta\over L}} |\varphi(u)-\varphi(v)|^2 \d v \Big ) \d u  \\
  \ge \: & \nu_L \int_{\R^d}  \Big |\varphi(u)-  \frac{1}{\nu_L} \int_{\|v-u\| < {\delta\over L}} \varphi(v) \d v \Big |^2   \d u  \quad (\text{by Jensen's inequality}) \\
= \: &    \nu_L \int_{\R^d}  \l|\varphi(u)-  \bp(u) \r|^2   \d u.  \numberthis \label{eq:interm-0}
\end{align*}

We note that $\bp =  \nu_L^{-1} \cdot \ph \ast \cbl$. By the Parseval-Plancherel Theorem, we have
\begin{equation} \label{eq:interm-1}
 \int_{\R^d}  \l|\varphi(u)-  \bp(u) \r|^2   \d u =  \|\varphi-  \bp\|_2^2   =   \|\hp-  \hat{\bp}\|_2^2 =  \|\hp-  \nu_L^{-1}\cdot \hp \cdot \widehat{\cbl}\|_2^2 = \|\hp \cdot (1-  \nu_L^{-1} \cdot \widehat{\cbl}) \|_2^2.
\end{equation}
Thus, we are reduced to examining the function $1-   \nu_L^{-1} \cdot \widehat{\cbl}$. In particular, we will show that it is $\gtrsim L^{-1}$ on a set that has a substantial intersection with $\supp(\hp)$.

To this end, we observe that $\cbl(x) = \cb\Big (\frac{L}{\delta} \cdot x \Big ) $. Via the behaviour of Fourier transforms under scaling (see \cite{Fi} for a complete derivation of the Fourier transform of the unit ball), we have
\begin{equation} \label{eq:Fourier_scaling-1}
\widehat{\cbl}(\xi)= \Big (\frac{\del}{L} \Big )^d \widehat{\cb} \Big (\frac{\del}{L} \cdot \xi \Big )= \Big (\frac{\del}{L}\Big )^d \cdot (2\pi)^{d/2} \Big \| {\del \over L} \cdot \xi \Big \|^{-d/2} J_{d/2} \Big ({\del \over L}\|\xi\| \Big ),
\end{equation}
where $J_{\b}$ is the Bessel function of the first kind with order $\b>0$.
We further have the following well-known power series expansion of such Bessel functions around $0$ (for details, see e.g. \cite{AS}) 
\begin{equation} \label{eq:Bessel}
J_{\b}(x) = \l({x \over 2}\r)^\b \sum_{k=0}^\infty \frac{(-1)^k x^{2k}}{4^k k! \Gamma(k+\b+1)} \cdot
\end{equation}
As a consequence, for small $x$, $J_\b$ can be further approximated as
\begin{equation} \label{eq:Bessel-1}
J_{\b}(x) = \l({x \over 2}\r)^\b  \cdot \l( \frac{1}{\Gamma(\b +1)} - \frac{x^2}{4\Gamma(\b+2)} + o(x^2)   \r).
\end{equation}

With the ingredients from \eqref{eq:Fourier_scaling-1}, \eqref{eq:Bessel-1}  in hand, we are ready to examine $\nu_L^{-1} \cdot \widehat{\cbl}(\xi)$ for $\xi$ in a fixed compact set (to be specified later). We proceed as
\begin{align*}
& \nu_L^{-1} \cdot \widehat{\cbl}(\xi) \\
= \: & \nu_L^{-1} \cdot \Big (\frac{\del}{L} \Big )^d \cdot   (2\pi)^{d/2}  \Big \| {\del \over L} \cdot \xi \Big \|^{-d/2} J_{d/2} \Big ({\del \over L}\|\xi\| \Big ) \\
= \: & \Big ( \Big ({\del \over L}\Big )^{d}\nu \Big )^{-1} \cdot  \Big (\frac{\del}{L}\Big )^{d/2}   \cdot   (2\pi)^{d/2}   \| \xi\|^{-d/2} \Big ({\del \|\xi\| \over 2L}\Big )^{d/2}  \cdot \Big ( \frac{1}{\Gamma(\frac{d}{2} +1)} - \frac{\del^2 \|\xi\|^2}{4L^2\Gamma(\frac{d}{2}+2)} + o(L^{-2})   \Big ) \\
= \: & \pi^{d/2} \nu^{-1} \cdot \Big ( \frac{1}{\Gamma(\frac{d}{2} +1)} - \frac{\del^2 \|\xi\|^2}{4L^2\Gamma(\frac{d}{2}+2)} + o(L^{-2})   \Big ) \\
= \: & \pi^{d/2} \Big (\frac{\pi^{d/2}}{\Gamma(\frac{d}{2}+1)}\Big )^{-1} \cdot \Big ( \frac{1}{\Gamma(\frac{d}{2} +1)} - \frac{\del^2 \|\xi\|^2}{4L^2\Gamma(\frac{d}{2}+2)} + o(L^{-2})   \Big ) \\
= \: & 1 - {\delta^2 \over 2 (d+2)} L^{-2} \|\xi\|^2 + o(L^{-2}) \\
= \: & 1 - \theta L^{-2} \|\xi\|^2 + o(L^{-2}),   \numberthis \label{eq:indicator_simplify} \quad \text{where $\theta := {\delta^2 \over 2 (d+2)} \cdot$}
\end{align*}

Since $\|\hp\|_2=\|\ph\|_2>0$, there exist $0<p<q<\infty$ such that \[\int_{\|\xi\| \in [p,q]} |\hp(\xi)|^2 \d \xi >0.\] 
Note that the set $\{p\le \|\xi\| \le q\}$ is compact, we now continue from \eqref{eq:interm-1} as
\begin{align*}
&  \|\hp \cdot (1-  \nu_L^{-1} \cdot \widehat{\cbl}) \|_2^2   \\
\ge \: & \int_{\|\xi\| \in [p,q]} |\hp(\xi)|^2 \cdot |1-  \nu_L^{-1} \cdot \widehat{\cbl}(\xi)|^2 \d \xi \\
= \: & \int_{\|\xi\| \in [p,q]} |\hp(\xi)|^2 \cdot |1-  (1 - \theta L^{-2} \|\xi\|^2 + o(L^{-2})) |^2 \d \xi  \quad (\text{using \eqref{eq:indicator_simplify} on $\{p\le \|\xi\| \le q\}$}) \\
\gtrsim \: & L^{-4} . \numberthis \label{eq:interm-2}
\end{align*}

Combining \eqref{eq:interm}, \eqref{eq:interm-0}, \eqref{eq:interm-1} and \eqref{eq:interm-2}, we may deduce that
\begin{equation}
\Var[\L(\varphi_L)] \gtrsim   L^{2d} \cdot \nu_L \cdot L^{-4} \gtrsim L^{d-4}.
\end{equation}

\end{proof}

%Here we used the fact that $\chi_{B(0,\delta/L)}(x) = \chi_B((\delta/L)^{-1}x)$, which implies $\hat\chi_{B(0,\delta/L)}(\xi) = (\delta/L)^d \hat \chi_B((\delta/L)\xi)$.
%Then
%$$ L^{2d} \iint |\varphi(u)-\varphi(v)|^2 K(Lu,Lv)^2 d u d v \gtrsim \int_{\R^d} |\hat \varphi (\xi)|^2 \Big | \nu - \hat \chi_{B} \Big ({\delta\over L}\xi \Big ) \Big |^2 d \xi.$$
%Now recall that $\hat \chi_B (\xi) = \|\xi\|^{-d/2} J_{d/2}(\|\xi\|)$, where $J_n$ is the Bessel function of order $n$. Then we have
%\begin{align*}
%\int_{\R^d} |\hat \varphi (\xi)|^2 \Big | \nu - \hat \chi_{B} \Big ({\delta\over L}\xi \Big ) \Big |^2 d \xi = \int_{\R^d} |\hat \varphi (\xi)|^2 \Big | \nu - \Big ({\delta \over L} \|\xi\| \Big )^{-d/2} J_{d/2} \Big ({\delta \over L}\|\xi \| \Big ) \Big |^2 d \xi.
%\end{align*}
%(I still stuck here).

%  \ge & \nu_L \cdot \int_K \Big ( \int_{\|u-v\| < {\delta\over L}} |\varphi(u)-\bp(v)|^2 d u \Big ) \d v \\
% \geq  &\int_{\R^d} \Big |\int_{\|u-v\| < {\delta\over L}} \varphi(u)-\varphi(v) d v \Big |^2 \d u \\
% =  &\int_{\R^d} \Big |{\delta^d \over L^d} \nu \varphi(u) - \varphi \ast \chi_{B(0,\delta/L)} (u) \Big |^2 \d u \\
% =  &\int_{\R^d} \Big |{\delta^d \over L^d} \nu  \hat \varphi (\xi) - \hat \varphi (\xi)  \hat \chi_{B(0,\delta/L)} (\xi) \Big |^2 \d \xi \\
% = &{\delta^{2d} \over L^{2d}}\int_{\R^d} |\hat \varphi (\xi)|^2 \Big | \nu - \hat \chi_{B} \Big ({\delta\over L}\xi \Big ) \Big |^2 \d \xi.

\subsubsection{Using hypothesis (ii.b) in Model \ref{model:det}} \label{s:ii.b}

\begin{prop}
For $\alpha$-determinantal processes in Model \ref{model:det} (ii.b), for all $L>0$ large enough we have
$$\var[\Lambda(\varphi_L)] \gtrsim L^{2(d-\beta)}.$$
\end{prop}
\noindent
\textit{Proof.} We first recall that 
$$\var[\L(\varphi_L)] \gtrsim  L^{2d} \iint |\varphi(u)-\varphi(v)|^2 |K(Lu,Lv)|^2 \d u \d v.$$

Let $B:= \supp(\varphi)$, and let $ B_1$ be the set of points of distance at most $1$ to $B$ (hence, $B \subset B_1$). We may further lower bound
$$ L^{2d} \iint |\varphi(u)-\varphi(v)|^2 |K(Lu,Lv)|^2 \d u \d v
\ge  L^{2d} \int_{B} \int_{B_1^\complement} |\varphi(u)-\varphi(v)|^2 |K(Lu,Lv)|^2 \d u \d v. $$
Since $\varphi$ vanishes on $B_1^\complement$, we have
\begin{equation} \label{eq:int-2}
 L^{2d} \iint |\varphi(u)-\varphi(v)|^2 |K(Lu,Lv)|^2 \d u \d v \ge  L^{2d} \int_{B} \int_{B_1^\complement} |\varphi(u)|^2 |K(Lu,Lv)|^2 \d u \d v.
\end{equation}

%To proceed further, for $x \in \mathbb R^d, n,r>0$ we denote by $A^x_n(r)$ the annulus in $\R^d$ given by $A^x_n(r):=\{y \in \R^d :  nr \le \|y-x\| \le (n+1)r  \}$. 
We recall the condition (ii.b) in Model \ref{model:det}, which says that there exist positive constants $ \beta ,r, c_1,c_2$ with $d/2 < \beta < d$ and $n_0 \in \N_+$, such that for each $x \in \R^d$, the set
\[ E_x := \{y : |K(x,y)| \ge c_1 \|x-y\|^{-\beta} \} \]
satisfies 
\begin{equation} \label{eq:model_condii.b}
\Vol(E_x \cap A^x_n(r)) \ge c_2 \Vol(A^x_n(r)), \quad \forall n \ge n_0.
\end{equation}
\begin{lem} \label{Prop.2.5}
%Let $F \subset \R^d$ be compact $F_\del$ be the $\del$ neighbourhood of $F$ for a given $\del>0$, and
Let $K$ satisfy the above condition. Then there exist positive constants $c_3,R_0$ (independent of $x \in \R^d$), such that
\[ \int_{\|x-y\|>R} |K(x,y)|^2  \d y  \ge c_3 R^{d-2\beta}, \quad \forall R\ge R_0.  \]
\end{lem}

\begin{proof}[Proof of Lemma \ref{Prop.2.5}]
For simplicity of notations, we prove for the case $r=1$. The general case is similar. In what follows, we will denote $A_n^x := A_n^x(1)$.

Choosing $R_0=n_0$, then for any $R\geq R_0 $ and $x\in \mathbb R^d$, we have
$$\int_{\|x-y\|>R} |K(x,y)|^2 \d y
\geq \sum_{n=[R]+1}^{\infty} \int_{A_n^x} |K(x,y)|^2 \d y
\geq \sum_{n=[R]+1}^{\infty} \int_{A_n^x \cap E_x} |K(x,y)|^2 \d y.$$
Recall that $E_x = \{y: |K(x,y)| \ge c_1 \|x-y\|^\beta\}$ and $A_n^x= \{y: n \le \|x-y\| \le n+1\}$. Thus, on $A_n^x \cap E_x$, one has
\[ |K(x,y)|^2 \ge c_1^2 \|x-y\|^{-2\beta} \ge c_1^2 (n+1)^{-2\beta}. \]
This implies
$$\int_{A_n^x \cap E_x} |K(x,y)|^2 \d y \ge c_1^2 (n+1)^{-2\beta} \Vol (E_x \cap A_n^x) \ge c_1^2 c_2 (n+1)^{-2\beta} \Vol(A_n^x),$$
where we used \eqref{eq:model_condii.b} in the last inequality.
On $A_n^x$, one has $ \| x-y \|^{-2\beta} \le n^{-2\beta}$, which implies
\[\Vol (A_n^x) \ge n^{2\beta} \int_{A_n^x} \|x-y\|^{-2\beta} \d y.\]
Thus
$$\int_{A_n^x \cap E_x} |K(x,y)|^2 \d y \ge c_1^2 c_2 \Big ({n\over n+1}\Big )^{2\beta} \int_{A_n^x} \|x-y\|^{-2\beta} \d y \ge c_1^2 c_2 2^{-2\beta} \int_{A_n^x} \|x-y\|^{-2\beta} \d y.$$
Summing up over $n$ gives
\begin{eqnarray*}
\int_{\|x-y\|>R} |K(x,y)|^2 \d y &\ge& c_1^2 c_2 2^{-2\beta} \int_{\|x-y\|\ge [R]+1} \|x-y\|^{-2\beta} \d y \\
&\ge& c_1^2 c_2 2^{-2\beta} \int_{\|x-y\|\ge 2R} \|x-y\|^{-2\beta} \d y \\
&=& c_1^2 c_2 2^{-2\beta} R^{d-2\beta} \int_{\|u\| \geq 2} \|u\|^{-2\beta}\d u.
\end{eqnarray*}
Choosing $c_3:=c_1^2 c_2 2^{-2\beta} \int_{\|u\| \geq 2} \|u\|^{-2\beta}\d u$ completes the proof.
\end{proof}

We now continue from \eqref{eq:int-2}  with $L \ge L_0$ (for some $L_0$ large enough) as follows:
\begin{align*}
& L^{2d} \int_{B} \int_{B_1^\complement} |\varphi(u)|^2 |K(Lu,Lv)|^2 \d u \d v \\
= \: & L^d \int_{B} |\varphi(u)|^2  \Big ( L^d \int_{B_1^\complement} |K(Lu,Lv)|^2 \d v \Big )\d u \\
= \: & L^d \int_{B} |\varphi(u)|^2  \Big (  \int_{L \cdot B_1^\complement} |K(Lu,y)|^2 \d y \Big )\d u  \quad \text{(changing variables to $y=Lv$)} \\
\ge \: & c_3 L^d \int_{B} |\varphi(u)|^2  L^{d-2\beta} \d u \\
& \quad \quad \text{(using Lemma \ref{Prop.2.5} with $x=Lu$; note that $\|Lu-Lv\| \ge L$)} \\
= \: & c_3 \|\varphi\|_2^2 L^{2(d-\beta)}. \numberthis \label{eq:interm-3}
\end{align*}
Thus, we have the desired polynomial lower bound on the variance growth of $\Lambda(\varphi_L). ~\blacksquare$

\medskip

The following example shows that the Bessel kernels satisfy Model \ref{model:det} (ii.b):

\begin{example} \label{ex:Fourier}
The Bessel kernel $K(x,y) = \hat \chi_{B(0,1)} (x-y)$, where $\hat \chi_{B(0,1)}$ is the Fourier transform of the characteristic function of the unit ball ${B(0,1)}$ in $\R^d,  (d\geq 2)$, satisfies the decay condition in Model \ref{model:det} (ii.b) with
$$\beta = {d+1 \over 2}\quad,\quad r = 2\pi \quad,\quad c_1= {(2\pi)^{d/2} \over 4 \pi^{1/2}}\quad,\quad c_2 = {b-a \over 2^d \pi},$$
for some constants $a<b \in [0,2\pi]$.

Indeed, we have
$$K(x,y) = \hat\chi_{{B(0,1)}} (x-y) = (2\pi)^{d/2} \|x-y\|^{-d/2} J_{d/2}(\|x-y\|), $$
 where $J_n$ is the Bessel function of the first kind with order $n$ (\cite{Fi}). Use well-known asymptotics of Bessel functions (\cite{AS}), we have
$$ J_{d/2}(t) \sim \sqrt{2\over \pi t} \cos \Big ( t - {d+1\over 4} \pi \Big ) + O(t^{-3/2}) \quad \text{as $t\rightarrow +\infty$}.$$
Hence there exists a positive integer $n_0$ such that for all $t > n_0$, we have
$$ \sqrt{t} J_{d/2}(t) \geq \sqrt{2} \pi^{-1/2} \cos \Big ( t - {d+1\over 4} \pi \Big ) - {1\over 4} \pi^{-1/2}.$$
Let $[a,b] \subset [0,2\pi]$ be a subinterval such that $\cos \Big ( t - {d+1\over 4} \pi \Big ) \geq 1/2$ for all $t\in [a,b]$. Then for all positive integer $n>n_0$, we will have
$$\sqrt{t} J_{d/2}(t) \geq {\sqrt{2} \over 2} \pi^{-1/2} - {1\over 4}\pi^{-1/2} > {1\over 4} \pi^{-1/2}, \quad \forall t \in [2\pi n +a , 2\pi n +b].$$

For each $x\in \R^d$, consider the set
\begin{align*}
E_x:= &\Big \{y \in \R^d: |K(x,y)| \geq  {(2\pi)^{d/2} \over 4 \pi^{1/2}}\|x-y\|^{-(d+1)/2} \Big \} \\
=&\Big \{ y \in \R^d: \|x-y\|^{1/2} |J_{d/2}(\|x-y\|)| \geq {1\over 4} \pi^{-1/2} \Big \} \cdot
\end{align*}
By the argument above, we see that for every integer $n>n_0$
$$E_x \cap A_n^x(2\pi) \supset \{y \in \R^d: 2\pi n + a \leq \|x-y\| \leq 2\pi n + b\}.$$
Thus for all $n>n_0$
$${\Vol(E_x \cap A_n^x(2\pi) )\over \Vol(A_n^x(2\pi))} \geq {(2\pi n + b)^d - (2\pi n + a)^d \over (2\pi(n+1))^d - (2\pi n)^d} \geq {b-a \over 2^{d}\pi},$$
where we used the identity 
$x^d-y^d=(x-y)(x^{d-1}+x^{d-2}y+\cdots+y^{d-1}).$
\end{example}

\subsubsection{Using hypothesis (ii.c) in Model \ref{model:det}} \label{s:ii.c}

\begin{prop}
For $\alpha$-determinantal processes in Model \ref{model:det} (ii.c), for all $L>0$ large enough we have
$$\var[\Lambda(\varphi_L)] \gtrsim L^{d-2}.$$
\end{prop}

\begin{proof}
We first recall that 
\begin{eqnarray*}
\var[\L(\varphi_L)] &\gtrsim& \iint |\varphi_L(x)- \varphi_L(y)|^2 |K(x,y)|^2 \d \mu(y) \d \mu(x) \\
&\gtrsim& \iint |\varphi_L(x)- \varphi_L(y)|^2 |K(x,y)|^2 \d y \d x.
\end{eqnarray*}

Since $\varphi \in C^2_c$, for any $x,y \in \R^d$, we can write
$$\varphi(y) - \varphi(x) = \langle y-x, \nabla \varphi(x) \rangle + \langle y-x , \nabla^2 \varphi(\xi) \cdot (y-x) \rangle,$$
where $\nabla^2 \varphi(\xi)$ denotes the Hessian matrix of $\varphi$ at $\xi\in\R^d$. We further note that
\begin{eqnarray*}
|\langle y -x , \nabla^2 \varphi (\xi) \cdot (y-x) \rangle | \le \sup_{\xi\in\R^d} \|\nabla^2 \varphi(\xi)\|_\op \cdot \|y-x\|^2.
\end{eqnarray*}
Let $M:= \sup_{\xi \in \R^d} \| H \varphi(\xi)\|_\op$, which is finite since $\varphi \in C^2_c$. We then have 
$$|\varphi_L(y) - \varphi_L(x)| \ge {1\over L} \cdot |\langle y-x,\nabla \varphi(x/L) \rangle| - {M \over L^2} \cdot \|y-x\|^2,$$
which implies
\begin{eqnarray*}
|\varphi_L(y) - \varphi_L(x)|^2 &\ge& {1\over L^2} |\langle y-x, \nabla \varphi(x/L)\rangle |^2 - {2M \over L^3} |\langle y-x , \nabla \varphi (x/L) \rangle |\cdot \|y-x\|^2 \\
&\ge& {1\over L^2} |\langle y-x, \nabla \varphi(x/L)\rangle |^2 - {2M \over L^3} \|\nabla \varphi\|_\infty \cdot \|y-x\|^3,
\end{eqnarray*}
where $\|\nabla \varphi \|_\infty := \sup_{x\in\R^d}\|\nabla \varphi (x)\|$. 

Let $B$ be the closure of the set $\{x\in \R^d: \nabla \varphi (x) \neq 0 \}$ in $\R^d$. Since $\varphi$ is $C^2_c$, $B$ is compact. Then we can further lower bound
\begin{eqnarray*}
\var[\L(\varphi_L)] &\gtrsim& \int_{L\cdot B} \int_{\R^d}  |\varphi_L(x)- \varphi_L(y)|^2 |K(x,y)|^2 \d y \d x \\
&\gtrsim& {1\over L^2} \int_{L\cdot B} \int_{\R^d}  |\langle y-x, \nabla \varphi(x/L)\rangle |^2 |K(x,y)|^2 \d y \d x \\
&-& {2M\over L^3} \|\nabla \varphi \|_\infty \int_{L\cdot B} \int_{\R^d} \|y-x\|^3 |K(x,y)|^2 \d y \d x.
\end{eqnarray*}

Let $\kappa_1:= \inf_{x\in \R^d} \inf_{u\in \mathbb S^{d-1}} \int_{\R^d} |\langle y-x, u \rangle |^2 |K(x,y)|^2 \d y $, which is positive by the assumptions of Model \ref{model:det} (ii.c). Then 
$$\int_{\R^d}  |\langle y-x, \nabla \varphi(x/L)\rangle |^2 |K(x,y)|^2 \d y \ge \kappa_1 \| \nabla \varphi (x/L) \|^2 \quad \text{ for all } x \in \R^d,$$
which implies
$$ \int_{L\cdot B} \int_{\R^d}  |\varphi_L(x)- \varphi_L(y)|^2 |K(x,y)|^2 \d y \d x \ge \int_{L\cdot B} \kappa_1  \| \nabla \varphi (x/L) \|^2 \d x 
= \kappa_1 \|\nabla \varphi\|_2^2 \cdot L^d,$$
where $\|\nabla\varphi\|_2^2 := \int_{\R^d} \|\nabla \varphi (x)\|^2 \d x$.
Let $\kappa_2:= \sup_{x\in \R^d} \int_{\R^d} \|y-x\|^3 |K(x,y)|^2 \d y$, which is finite due to the assumptions of Model \ref{model:det} (ii.c). We then have
$$\int_{L\cdot B} \int_{\R^d} \|y-x\|^3 |K(x,y)|^2 \d y \d x \le \kappa_2 \Vol(L\cdot B) = \kappa_2 \Vol (B) \cdot L^d.$$
Combining every ingredients, we deduce that for $L>0$ large enough
$$\var[\L(\varphi_L)] \gtrsim L^{d-2}.$$
\end{proof}

Now we will discuss about the generality of condition \eqref{eq:rot_inv} in Model \ref{model:det} (ii.c). In particular, \eqref{eq:rot_inv} is satisfied for any translational and rotational invariant kernel $K$. 
Indeed, if $K$ is translational and rotational invariant, we can write $|K(x,y)|^2 = \Phi (\|x-y\|)$ for some univariate function $\Phi$. Let $u\in \mathbb S^{d-1}$ be any unit vector, we have
$$\int_{\R^d} |\langle  y-x , u \rangle |^2 |K(x,y)|^2 \d y = \int_{\R^d} |\langle  y-x , u \rangle |^2 \Phi(\|y-x\|) \d y = \int_{\R^d} |\langle v , u \rangle |^2 \Phi (\|v\|) \d v.$$
Let $\mathcal R\in \text{SO}(d)$ be any rotation, we can further write
$$\int_{\R^d} |\langle v,u\rangle |^2 \Phi(\|v\|) \d v 
= \int_{\R^d} |\langle \mathcal R v,u\rangle |^2 \Phi(\|v\|) \d v = \int_{\R^d} |\langle v, \mathcal R u\rangle |^2 \Phi(\|v\|) \d v.$$
Since $\text{SO}(d)$ acts transitively on $\mathbb S^{d-1}$, the value of $\int_{\R^d} |\langle  y-x , u \rangle |^2 |K(x,y)|^2 \d y$ does not depend on $x\in \R^d$ and $u\in \mathbb S^{d-1}$. By letting $u=e_i:=(0,\ldots,1,\ldots,0)^\top \in \R^d$ (the $1$ is at the $i$-th coordinate), for $i=1,\ldots,d$, we have
$$\inf_{x\in\R^d} \inf_{u\in \mathbb S^{d-1}} \int_{\R^d} |\langle  y-x , u \rangle |^2 |K(x,y)|^2 \d y = \int_{\R^d} |v_i|^2 \Phi(\|v\|)\d v = {1\over d} \int_{\R^d} \|v\|^2 \Phi (\|z\|) \d z.$$ 
If $\inf_{x\in\R^d} \inf_{u\in \mathbb S^{d-1}} \int_{\R^d} |\langle  y-x , u \rangle |^2 |K(x,y)|^2 \d y = 0$, we deduce that $\Phi (\|v\|) = 0$ almost everywhere. This is not possible, so the condition \eqref{eq:rot_inv} is satisfied.
%\begin{remark}
%This is along expected lines, since for $d=1$ the variance of linear statistics are known to grow only logarithmically, thus do not fit in our framework. In fact, the resulting variance growth of $L^{d-1}$ is known to be the correct order in $d\geq 2$.
%\end{remark}

\newpage

\begin{appendix}
%\section*{???}%% if no title is needed, leave empty \section*{}.
\label{s:proof-1}
\section{A brief review on Marcinkiewicz theory}

Let $X$ be a real-valued random variable on a probability space.
We define the associated centered random variable by
$$\bar X:= X-\E[X].$$
Denote by $F_{X}:\R\to [0,1]$ the cumulative distribution function (c.d.f.) of $X$, that is,
$$F_{X}(x):=\P(X\leq x) \text{ \ for \ } x\in\R.$$
The c.d.f. of the Gaussian $N(0,1)$ is denoted by $\Phi$. Denote also by $\sigma^2$ the variance of $X$ with $\sigma\geq 0$.
Define $\log^+:=\max(\log,0)$. We will denote by  $\Psi_X(u):=\E[e^{iuX}]$ the characteristic function of $X$. For brevity, we will suppress the $X$-dependency from $\Psi_X$ and simply write $\Psi$ from time to time, whenever the random variable is understood from context.

The function $\Psi$, although well defined for any $u \in \R$, admits a holomorphic extension to the entire complex plane only under appropriate decay of the tails of the distribution of $X$. Entire characteristic functions form a function class of independent interest, and their distinctive properties such as growth rates and zero distributions have received considerable attention in the literature \cite{Pol,Ram,Luk,LSz}.

A fundamental result on entire characteristic functions is the classic  Marcinkiewicz Theorem \cite{Mar}. This entails that if $\Psi$ is an entire characteristic function that is of the form $\exp(P(u))$ for some polynomial $P$, then $X$ has to be a Gaussian (unless it is degenerate, i.e. purely atomic). The characteristic function $\Psi$ being of the form $\exp(f)$ for some entire function $f$ is equivalent to the assertion that $\Psi$ has no zeros on the whole of $\C$. For such functions, the growth rate, or equivalently, the order of the entire function $f$ is of considerable interest.

The simplest possible growth behaviour of $f$ arises when $X$ is degenerate, that is, a delta measure at a point; it is easy to see that in this case $f(u)$ grows at most linearly in $|u|$. It is well known that if $X$ follows a standard Poisson distribution, then $f(u)=c \cdot (e^{iu}-1)$ for some constant $c$. What growth rates for $f$ are possible in between these two extremities of linear and exponential growth is a fundamental question.

To set notations, we define $M_f(r):=\max\{|f(z)| : |z|=r \}$. In 1960, Linnik conjectured that the regime of Gaussianity, i.e. the regime of growth rate for which a Marcinkiewicz-type theorem holds true, extends all the way till $\log^+ M_f(r)=o(r)$, as $r \to \infty$. The most significant result in this direction is the theorem of Ostrovskii \cite{Os-a,Os-b,Os-1}, who demonstrates that Linnik's conjecture is indeed true as soon as $\limsup_{r \to \infty} r^{-1}\log^+ M_f(r) \newline = 0$, using ideas from Wiman-Valiron theory. By the very setup of these results, it is required that the entire characteristic function be of the form $\Psi(u)=\exp(f(u))$ for some entire function $f$; this entails in particular that the characteristic function $\Psi$ does not vanish anywhere on $\C$.

Theorem \ref{t:main-1} easily implies the following extension by Zimogljad \cite{Zim} of the classical Marcinkiewicz theorem:

\begin{corollary} \label{c:Marcin}
Let $X$ be a real-valued random variable such that $\E[e^{iuX}]=e^{f(u)}$ for some entire function $f(u)$.
Assume that
$$\liminf_{r\to\infty} r^{-1}  \log^+ \sup_{|u|=r} \Re f(u) =0.$$
Then, $f$ is a polynomial of degree at most equal to $2$.
\end{corollary}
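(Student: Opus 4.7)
The plan is to feed the Corollary's hypothesis directly into Theorem~\ref{t:main-1}: since $\Psi_X = e^f$ automatically satisfies the non-vanishing condition on every disk, the quantitative Kolmogorov--Smirnov bound of Theorem~\ref{t:main-1} will vanish along a suitable subsequence, forcing $X$ to be Gaussian. Since the characteristic function of a Gaussian has precisely the form $\exp(i\mu u - \tfrac{1}{2}\sigma^2 u^2)$, this pins down $f$ as a polynomial of degree at most $2$.

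First I would handle the degenerate case $\sigma = 0$: then $X$ is almost surely a constant $c$, $\Psi_X(u) = e^{icu}$, and $f(u) = icu$ is linear. Assume henceforth $\sigma > 0$. Since $\Psi_X = e^f$ is entire and nowhere zero, and $\E[e^{\pm rX}] = e^{f(\mp ir)}$ is finite for every $r > 0$, the integrability condition $\E[e^{r|X|}] < \infty$ and the non-vanishing hypothesis of Theorem~\ref{t:main-1} are satisfied on every closed disk.

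Next, I would normalise to variance one by setting $Y := X/\sigma$, so that $\Psi_Y(u) = e^{f(u/\sigma)}$. By the liminf hypothesis, choose $r_n \uparrow \infty$ with $r_n^{-1}\log^+\sup_{|u|=r_n}\Re f(u) \to 0$. Applying Theorem~\ref{t:main-1} to $Y$ at radius $R_n := \sigma r_n$, the $|\sigma_Y - 1|$ term vanishes, and the identity $|\Psi_Y(u)| = e^{\Re f(u/\sigma)}$ collapses the double logarithm to
$$\log^+\log\max_{|u|=R_n}|\Psi_Y(u)| = \log^+\sup_{|v|=r_n}\Re f(v),$$
yielding
$$\sup_{x\in\R}|F_{\bar Y}(x) - \Phi(x)| \leq \frac{A}{\sigma r_n}\Bigl(1 + \log^+\sup_{|v|=r_n}\Re f(v)\Bigr) \to 0.$$
Hence $\bar Y$ is standard Gaussian, so $X \sim N(\E[X], \sigma^2)$, and $f(u) = i\E[X]\,u - \tfrac{1}{2}\sigma^2 u^2$ is a quadratic polynomial.

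I do not anticipate any serious obstacle beyond careful bookkeeping. The rescaling $u \mapsto u/\sigma$ only alters the circle radius by the constant factor $\sigma$ and thus preserves the liminf condition; and both occurrences of $\log^+$ in the manipulation act on a non-negative quantity, since $\max_{|v|=R_n}|\Psi_Y(v)| \geq |\Psi_Y(0)| = 1$ by the maximum modulus principle, which lets the double logarithm collapse cleanly as claimed.
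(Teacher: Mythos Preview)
Your proposal is correct and follows essentially the same route as the paper: handle the degenerate case, normalize to unit variance, apply Theorem~\ref{t:main-1} along a subsequence $r_n\to\infty$ realizing the liminf, and conclude that $\bar X/\sigma$ is standard Gaussian. Your bookkeeping (the rescaling $R_n=\sigma r_n$, the collapse of the double logarithm via $|\Psi_Y(u)|=e^{\Re f(u/\sigma)}$, and the verification of finite exponential moments) is exactly what the paper's terse proof leaves implicit.
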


In the classical Marcinkiewicz theorem, the function $f$ is assumed to be a polynomial and therefore the hypothesis on the growth of $f$ is automatically satisfied. Ostrovskii's theorem \cite{Os-a,Os-b,Os-1} entails a similar conclusion  as Corollary \ref{c:Marcin} with the stronger growth assumption
$$\limsup_{r\to\infty} r^{-1}  \log \sup_{|u|=r} |f(u)| =0.$$
This condition may be shown to be equivalent to
$$\limsup_{r\to\infty} r^{-1}  \log \sup_{|u|=r} \Re f(u) =0.$$

On a related note, we also refer to the preprint due to Eremenko and Fryntov \cite{EF}, which has been announced shortly prior to the announcement of our results, and also addresses the question of stability in Marcinkiewicz theorem, using a very different approach than ours. We emphasise that in our main theorems we only assume the zero freeness on a disk of finite radius. This is crucial for important applications, such as spin systems and general $\a$-determinantal processes examined in our paper (c.f. Remarks \ref{rem:spin}, \ref{rem:det} resp.), and is a main difference in comparison with the classical works by Marcinkiewicz, Ostrovskii, as well as \cite{Zim} and  \cite{EF}, where the zero freeness needs to hold for an infinite strip. In particular, a key technique using Phragm\`en-Lindel\"of principle for $\C$ or a strip in $\C$ in the approach of Ostrovskii, Zimoglyad and Eremenko-Fryntov doesn't work in our setting. Note also that we don't assume any hypothesis on the global behaviour of the characteristic function on $\C$ as needed in \cite{EF}. In particular, our characteristic functions do not need to exist on the whole $\C$.  

In this vein, we would like to mention a different flavour of stability results for the Marcinkiewicz theorem due to Golinskii (\cite{G-1}, see also \cite{GC}). This answers a question of Sapogov (\cite{Sap-1},\cite{Sap-2}) which asks for  conditions on the coefficients of a polynomial $P$ under the assumption that $\exp(P(\cdot))$ is \textit{close to} a characteristic function (but might not be a characteristic function itself).

It would be of interest to investigate if a combination of these approaches can lead to a more succinct and sharper quantitative Marcinkiewicz theory.

\section{Proofs of Theorem \ref{t:main-1} and Corollary \ref{c:CLT}}
We start by proving Theorem \ref{t:main-1}. 
In methodological terms, our proof strategy is similar spirit to \cite{MS-2}; however we obtain a simpler and more succinct  argument purely based on classical complex analytic techniques.
In particular, the following lemma may be seen as a version of \cite[Lemma 4.1]{MS-2} and is a key ingredient in the proof. 
Our proof of the lemma only uses Poisson formula and Dirichlet problem.
\begin{lemma} \label{l:key}
Let $h$ be a harmonic function on a neighbourhood of a rectangle $[-a,a]\times [0,b]$ such that $e^{\pi a/b}\geq 4\max|h|+1$ and $h(t)\geq h(t+is)$ for $t\in [-a,a]$ and $s\in [0,b]$.
Then, we have $h(is)-h(is')+1\geq 0$ for $0\leq s\leq s'\leq b-s$.
\end{lemma}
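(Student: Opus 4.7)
The plan is to introduce the auxiliary harmonic function
\[
\tilde h(z) := h(z) - h(z + i(s'-s))
\]
on the sub-rectangle $R_1 := [-a,a] \times [0, b - (s'-s)]$, so that the conclusion of the lemma becomes equivalent to $\tilde h(is) \geq -1$. Writing $\tau := s'-s$ and $M := \max|h|$, the hypothesis of the lemma immediately yields $\tilde h(t,0) = h(t,0) - h(t,\tau) \geq 0$ on the bottom edge of $R_1$, while the trivial bound $|\tilde h| \leq 2M$ holds on the other three edges. The constraint $s' \leq b - s$ places $is$ in the lower half of $R_1$, since its height is $s \leq (b-\tau)/2$.

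Next I would solve two Dirichlet problems on $R_1$, decomposing $\tilde h = u + v$, where $u$ is the harmonic function with boundary values $\tilde h$ on the bottom and zero on the three other edges, and $v$ carries the remaining boundary data. The maximum principle gives $u \geq 0$, so the task reduces to showing $v(is) \geq -1$; by the Poisson formula on $R_1$ this reads
\[
v(is) = \int_{\text{sides}\,\cup\,\text{top}} \tilde h(\zeta)\,d\omega_{is}(\zeta).
\]
For the sides, a standard separation of variables applied to the Dirichlet problem on $R_1$ with boundary value $1$ on the sides and $0$ on top and bottom produces an explicit Fourier series with decay factors of the form $\cosh(n\pi x/(b-\tau))/\cosh(n\pi a/(b-\tau))$, yielding an exponentially small bound $\omega_{is}(\text{sides}) \leq C\, e^{-\pi a/b}$. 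Combined with the hypothesis $e^{\pi a/b} \geq 4M+1$, the total side contribution to $v(is)$ is at most a universal multiple of $M/(4M+1)$, hence uniformly controlled.

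The most delicate step is the top contribution, since the crude bound $|\tilde h| \leq 2M$ combined with the strip-comparison estimate $\omega_{is}(\text{top}) \leq s/(b-\tau) \leq 1/2$ only yields an $O(M)$ estimate, which is insufficient. To sharpen it, I would represent $\tilde h$ via the explicit Poisson kernel for the infinite strip $\{0 < \Im z < b - \tau\}$, namely
\[
\frac{\sin(\pi y/(b-\tau))}{2(b-\tau)\,[\cosh(\pi(x-t)/(b-\tau)) \mp \cos(\pi y/(b-\tau))]},
\]
applied to the boundary values of $\tilde h$ on $y = 0$ and $y = b - \tau$ truncated to $[-a,a]$, with the truncation error again controlled by the exponentially small side harmonic measure. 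The non-negativity of $\tilde h$ on the bottom kills the positive-sign strip integral, and the precise calibration $e^{\pi a/b} \geq 4M+1$ is tuned so that the residual terms combine to give the clean bound $v(is) \geq -1$. The main obstacle I anticipate is organizing this bookkeeping in a sharp form; if the direct approach proves cumbersome, an alternative would be to iterate the shift trick for smaller increments $\tau$ and sum the resulting finer estimates, or to exploit the reflection symmetry across $y = (b-\tau)/2$ of the rectangle $R_1$ to symmetrize the estimates between the top and the bottom.
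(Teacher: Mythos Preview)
Your decomposition has a genuine gap at the top contribution, and the hand-waving about the strip Poisson kernel does not close it. On the top edge of $R_1$ the boundary datum is $\tilde h(t,b-\tau)=h(t,b-\tau)-h(t,b)$, about which the hypothesis $h(t,0)\ge h(t,s)$ says nothing: both terms are values of $h$ at positive heights, and their difference can be anywhere in $[-2M,2M]$. Since $is$ sits at height $s\le (b-\tau)/2$, the harmonic measure of the top edge from $is$ is of order $1$ (not exponentially small), so the top integral is genuinely $O(M)$, and no ``precise calibration'' of the side constant $e^{\pi a/b}\ge 4M+1$ can reduce it to $O(1)$: that constant governs horizontal decay, not vertical harmonic measure. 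Your strip-kernel remark does not help either, because the bottom non-negativity has already been consumed by $u$; re-using it for $\tilde h$ just reproduces the same uncontrolled top integral.

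The paper's argument avoids this obstruction by not forming the difference $\tilde h$. Instead it first reduces to the symmetric case $s'=b-s$, then splits $h$ itself as $h=h_0+h_++h_-$ on the full rectangle, where $h_0$ carries the top and bottom data and $h_\pm$ the two vertical sides. The side pieces $h_\pm$ are bounded by $1/2$ via the explicit Fourier series you already know, and this is where the constant $e^{\pi a/b}\ge 4M+1$ is spent. The heart of the proof is showing $h_0(is)\ge h_0(is')$ \emph{exactly}, with no error: one conformally maps the rectangle to the unit disk sending the midpoint $ib/2$ to $0$; by symmetry $is$ and $is'=i(b-s)$ go to conjugate points $\mp iy$, and the bottom and top edges go to the lower and upper semicircles with matching points $e^{\mp i\theta}$. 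The hypothesis $h(t,0)\ge h(t,b)$ then reads $g(e^{-i\theta})\ge g(e^{i\theta})$ on the boundary, and the Poisson formula (which weights $e^{-i\theta}$ more heavily from $-iy$ than from $iy$) delivers $g(-iy)\ge g(iy)$. Your closing remark about reflection symmetry across the midline is pointing at exactly this mechanism, but it only works if you compare $h$ at the two symmetric points rather than trying to bound the shifted difference $\tilde h$ at a single point.
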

\proof
By reducing $b$, we can assume that $0\leq s\leq s'=b-s$.
Let $h_0, h_+, h_-$ be the solutions of the following Dirichlet problems on the rectangle $[-a,a]\times [0,b]$
$$\begin{cases}
\Delta h_0=0 \\
h_0=h \quad \text{on} \quad (-a,a)\times \{0,b\} \\
h_0=0 \quad \text{on} \quad \{-a,a\}\times (0,b)
\end{cases}
\quad 
\begin{cases}
\Delta h_\pm=0 \\
h_\pm=0 \quad \text{on} \quad (-a,a)\times \{0,b\} \text{ and } \{\mp a\}\times (0,b)\\
h_\pm=h \quad \text{on} \quad \{\pm a\}\times (0,b).
\end{cases}
$$
We have $h=h_0+h_++h_-$. So, 
it is enough to show that $h_0(is)\geq h_0(is')$ for $s, s'$ as above and $|h_\pm(is)|\leq 1/2$ for all $s\in (0,b)$.

We prove the first inequality.  Denote by $\D$ the unit disk in $\C$ and consider the unique conformal map 
$$\Pi: (-a,a)\times (0,b) \to \D \quad \text{such that} \quad  \Pi(ib/2)=0 \quad \text{and} \quad \Pi'(ib/2) \in\R_+.$$ 
It can be extended continuously to a bijective map $\Pi: [-a,a]\times [0,b] \to \overline \D$, see \cite[p.238]{StSh}. Write $\Pi = \Pi_2 \circ \Pi_1$, where $\Pi_1$ is the translation 
$u \mapsto u-ib/2$
and $\Pi_2$ is the unique conformal map from $(-a,a) \times (-b/2,b/2)$ to $\mathbb D$ with $\Pi_2(0)=0$ and $\Pi_2'(0) \in \R_+$. One can also extend $\Pi_2$ continuously to a bijective map $\Pi_2: [-a,a] \times [-b/2,b/2] \rightarrow \overline \D$.

Observe that the maps $z\mapsto \overline\Pi_2(\overline z), -\overline\Pi_2(-\overline z),-\Pi_2(-z)$ are conformal from $(-a,a) \times (-b/2,b/2)$ to $\mathbb D$ and satisfy the same properties at 0 as $\Pi_2$ does.
The uniqueness of $\Pi_2$ implies that all these maps are equal to $\Pi_2$. 
We easily deduce that $\Pi_2$ sends points in each half-line $e^{ik\pi/2}\R_+$, $k=0,1,2,3$, to the same half-line. In particular,
we have for some $y\in [0,1]$
$$\Pi(is) = \Pi_2(is - ib/2)=-iy \quad \text{and} \quad \Pi(is')= \Pi_2(-is+ib/2) = iy,$$
where we used $is' - ib/2 = - (is - ib/2)$ and $\Pi_2(z) = - \Pi_2(-z)$. Define $g(z):=h_0(\Pi^{-1}(z))$. For $0<\theta< \pi$, we have either
$$\Pi^{-1}(e^{-i\theta})\in (-a,a)\quad\text{and}\quad \Pi^{-1}(e^{i\theta})= \Pi^{-1}(e^{-i\theta})+ib$$
or these two points belong to the vertical edges of the rectangle where $h_0$ vanishes. It follows from the definition of $h_0$ that $g(e^{-i\theta})\geq g(e^{i\theta})$ for $0<\theta< \pi$.
We need to show that $g(-iy)\geq g(iy)$.

By Poisson's integral formula, we have 
\begin{eqnarray*} 
g(\pm iy) &=& {1\over 2\pi}\int_{-\pi}^\pi {1- y^2 \over |\pm iy - e^{i\theta}|^2} g(e^{i\theta}) d\theta \\
&=& {1\over 2\pi}\int_0^\pi {1- y^2 \over |\pm iy - e^{i\theta}|^2} g(e^{i\theta}) d\theta +{1\over 2\pi}\int_0^\pi {1- y^2 \over |\pm iy + e^{i\theta}|^2} g(e^{-i\theta}) d\theta.
\end{eqnarray*}
It follows that 
$$g(-iy)-g(iy)= {1\over 2\pi}\int_0^\pi(1-y^2)\Big[ {1\over |iy-e^{i\theta}|^2} -  {1\over |iy+e^{i\theta}|^2}\Big] \big(g(e^{-i\theta})-g(e^{i\theta})\big) d\theta.$$
It is clear that $g(-iy)-g(iy) \geq 0$ as each factor of the last integrand is non-negative.

It remains to prove the estimate for $h_\pm$. We only consider the case of $h_+$ as the case of $h_-$ can be obtained in the same way. Using a dilation of coordinate, we can assume for simplicity that $b=\pi$. We can solve explicitly the Dirichlet problem (see \cite[p.269]{AO}) and obtain
$$h_+(t+is)=\sum_{n=1}^\infty A_n \sinh(nt+na)\sin(ns) \quad \text{with} \quad A_n:={2\over \pi\sinh(2na)}\int_0^\pi h(a+i\xi) \sin(n\xi)d\xi.$$
Observe that $\sinh(t)/\sinh(2t) =1/(e^t+e^{-t}) \leq e^{-t}$ for $t\geq 0$. We then deduce that 
$$|h_+(is)|\leq 2\sum_{n=1}^\infty e^{-na} \max |h| = {2\over e^a-1} \max|h|\leq {1\over 2}\cdot $$
The lemma follows.
\endproof

We now prove a particular case of Theorem \ref{t:main-1}.

\begin{proposition} \label{p:main-1}
Let $X$ be as in Theorem \ref{t:main-1}. Assume moreover that
$X$ is centered and normalized, i.e., $\E[X]=0$ and $\sigma =1$. Let $\kappa:= \log^+ \log \E[e^{r|X|}]$. Then, we have for some universal constant $A>0$
$$\sup_{x\in \R} |F_X(x) - \Phi (x)| \leq {A(\kappa +1)\over r} \cdot$$
\end{proposition}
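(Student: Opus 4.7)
The plan is to combine the classical Berry--Esseen smoothing inequality with a sharp analytic estimate on $\log\Psi$ near the origin, the latter being made possible by Lemma \ref{l:key}. By the Berry--Esseen smoothing inequality, for any $T>0$ and absolute constants $c_1,c_2>0$,
$$\sup_{x\in\R}|F_X(x)-\Phi(x)|\le c_1 \int_{-T}^T \frac{|\Psi(u) - e^{-u^2/2}|}{|u|}\,du + \frac{c_2}{T}.$$
Choosing $T$ proportional to $r/(\kappa+1)$ will already make the boundary term $c_2/T=O((\kappa+1)/r)$, so the task reduces to bounding the integral by the same order.

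Since $\Psi$ is non-vanishing on $\overline{\D(0,r)}$ and $\Psi(0)=1$, the holomorphic branch $f:=\log\Psi$ with $f(0)=0$ is well defined on this disk. Centering and unit variance give $f'(0)=0$ and $f''(0)=-1$, so $\phi(u):=f(u)+u^2/2 = O(u^3)$ near the origin, and $\Psi(u)-e^{-u^2/2} = e^{-u^2/2}(e^{\phi(u)}-1)$. Two pieces of information about $f$ are at hand: the global bound $\mathrm{Re}(f(u)) = \log|\Psi(u)|\le \log\E[e^{|\Im u|\cdot|X|}]\le e^\kappa$ on the whole disk, coming from $|\Psi(u)|\le\E[e^{|\Im u|\cdot |X|}]$ and $\E[e^{r|X|}]<\infty$, and the sharper pointwise bound $\mathrm{Re}(f(t))\le 0$ on the real segment $[-r,r]$ coming from $|\Psi(t)|\le 1$.

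The crucial step is to combine these two bounds so as to control $|f|$ polynomially (rather than exponentially) in $\kappa$ on a suitable subdisk. I would apply Lemma \ref{l:key} to an auxiliary harmonic function $h$ built from $\mathrm{Re}(f)$, shifted by a harmonic correction chosen to enforce the monotonicity $h(t)\ge h(t+is)$ --- a correction driven by the H\"older-type bound $\log|\Psi(u)|\le (|\Im u|/r)\,e^\kappa$ that follows from the log-convexity of the moment generating function --- on a rectangle $[-a,a]\times[0,b]\subset\D(0,r)$ with $a\sim r$ and $b\sim r/(\kappa+1)$, calibrated so that the exponential hypothesis $e^{\pi a/b}\ge 4\max|h|+1\lesssim e^\kappa$ holds. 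Applying the lemma (and its translates along the real direction together with its counterpart in the lower half-plane) propagates the pointwise real-axis bound of $0$ into a uniform bound $\mathrm{Re}(f(u))\le C$ on the thin rectangle $[-r/2,r/2]\times[-b/2,b/2]$. This rectangle contains the disk $|u|\le b/2$, so Borel--Carath\'eodory yields $|f(u)|\le 2C$ on a disk of radius $\rho\sim r/(\kappa+1)$, and Cauchy's estimate on the Taylor coefficients of $f$ then gives $|\phi(u)|\le C'(|u|/\rho)^3$ for $|u|\le\rho/2$.

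Setting $T:=\rho/2\sim r/(\kappa+1)$, $|\phi|$ is uniformly bounded by a constant on $[-T,T]$, so $|e^{\phi(u)}-1|\le C''|\phi(u)|\le C'''(|u|/\rho)^3$, and the Berry--Esseen integral is at most $(C'''/\rho^3)\int u^2 e^{-u^2/2}\,du = O((\kappa+1)^3/r^3)$, which is dominated by $(\kappa+1)/r$ in the nontrivial regime $r\gtrsim\kappa+1$; in the complementary regime the assertion is trivial since the KS distance is always at most $1$ and $A(\kappa+1)/r$ exceeds $1$ for $A$ large. Combined with $c_2/T=O((\kappa+1)/r)$, this yields the proposition. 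The main obstacle is the third step: selecting an auxiliary harmonic function for which both the exponential hypothesis $e^{\pi a/b}\ge 4\max|h|+1$ and the monotonicity $h(t)\ge h(t+is)$ can be simultaneously verified on a rectangle of the correct aspect ratio. This is precisely the maneuver that upgrades the crude $e^\kappa$ bound on $|f|$ (from a direct Borel--Carath\'eodory application, which would only produce the weaker $e^\kappa/r$ rate) into a constant bound on a disk of radius comparable to $r/(\kappa+1)$, and is thus responsible for the sharp $(\kappa+1)/r$ rate in the conclusion.
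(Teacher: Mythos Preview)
Your overall architecture (Berry--Esseen smoothing plus a cubic bound on the remainder $\phi$) is the same as the paper's, but the heart of the argument --- producing the cubic bound --- does not go through as you describe. First, a minor point: the monotonicity hypothesis of Lemma~\ref{l:key}, namely $h(t)\ge h(t+is)$, cannot be enforced for $h=\Re(\log\Psi)$ by a linear correction of the sort you propose. The H\"older bound $\log|\Psi(t+is)|\le (|s|/r)e^\kappa$ is only an \emph{upper} bound on $\Re f(t+is)$, and since you have no useful lower bound on $\Re f(t)=\log|\Psi(t)|$ (which can be arbitrarily negative once $|\Psi(t)|$ is small), the inequality $\Re f(t)+\text{corr}(0)\ge \Re f(t+is)+\text{corr}(s)$ cannot be guaranteed. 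The paper sidesteps this by working with the moment generating function $e^{f(u)}=\E[e^{uX}]$, for which the ridge property $\log|\E[e^{tX}]|\ge \log|\E[e^{(t+is)X}]|$ is immediate from $|e^{(t+is)X}|=e^{tX}$, with no correction needed.

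The more serious gap is the claim that Lemma~\ref{l:key} ``propagates the pointwise real-axis bound of $0$ into a uniform bound $\Re f(u)\le C$'' on a strip, after which Borel--Carath\'eodory would give $|f|\le 2C$. This is false already for the standard Gaussian: there $f(u)=-u^2/2$ (in your normalization), so on any disk of radius $\rho$ one has $\max|f|=\rho^2/2$, and with $\rho\sim r/(\kappa+1)$ this is in no sense a universal constant. Lemma~\ref{l:key} only outputs a \emph{comparison} $h(is)-h(is')+1\ge 0$, not an absolute bound, and the quadratic term $u^2/2$ in $f$ cannot be removed by Borel--Carath\'eodory. What one actually needs is control on $\phi=f+u^2/2$ directly, i.e.\ bounds $|a_n|\lesssim r_3^{2-n}$ for $n\ge 3$, and this is precisely what the paper extracts via a chain of further lemmas: a Poisson/Harnack argument relating $\max|h|$ on a small square to $|h(ir')|$ (Lemma~\ref{l:max}), a self-referential Cauchy-type bound giving tail control on $\sum_{n\ge N}|a_n|r'^n$ (Lemma~\ref{l:large-index}), and a Newton-polygon argument in the spirit of Marcinkiewicz (Lemma~\ref{l:Gamma}) that rules out any coefficient $a_n$, $3\le n\le N-1$, being too large relative to $a_2=1/2$ by exhibiting a direction $u$ along which $\Re f(u)>f(\Re u)$ would violate the ridge inequality. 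Your proposal skips this entire mechanism, and without it the Borel--Carath\'eodory route only recovers the crude $e^\kappa/r$ rate you yourself flag as insufficient.
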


Observe that the left hand side of the above estimate is always bounded by 1 and we can choose $A$ large enough. Therefore, it is enough to consider 
 $r$ large enough. It follows that 
 $$\E[e^{r|X|}]\geq \E[r^2|X|^2/2]\geq r^2 \sigma^2/2=r^2/2$$ 
 and hence $\kappa$ is also large. 
 
By hypothesis, there is a function
$f(u)$ which is holomorphic on $\D(0,r)$ and continuous on $\overline{\D(0,r)}$ such that $f(0)=0$ and $e^{f(u)} = \E[e^{uX}]$. 
Consider its Taylor's expansion
$$f(u)=\sum_{n\geq 2} a_n u^n ={1\over 2} u^2+a_3 u^3+\cdots={1\over 2} u^2+R(u).$$
Since $X$ is real-valued, we have $a_n\in\R$ for every $n$.

Define $h(u):=\Re(f(u))=\log |\E[e^{uX}]|$. Since $X$ is real-valued, we have $h(u)=h(\overline u)$ for $|u|\leq r$. 
We then observe that
$h$ is  harmonic satisfying $h(0)=0$,  $h\geq 0$ on $[-r,r]$ (by Jensen's inequality)  and
$$h(u)  = \log |\E[e^{uX}]| \leq \log \E[e^{r|X|}] \leq  e^\kappa \quad \text{for} \quad |u|\leq r.$$
Thus, $\varphi(u):= e^{\kappa} - h(u)$ is a nonnegative harmonic function on $\D(0,r)$ and $\varphi(0) = e^{\kappa}$. By Harnack's inequality (see \cite[p.243]{Ah})
$$\varphi(u) \leq {r+|u|\over r-|u|} \varphi(0) \leq 5 \varphi(0) = 5e^{\kappa} \quad \text{for} \quad |u| \leq 2r/3,$$
which implies
$$|h(u)| \leq 4 e^{\kappa} \quad \text{for} \quad |u| \leq 2r/3.$$
Moreover, we have
\begin{equation} \label{e:h-ridge}
h(t) = \log |\E[e^{tX}]| \geq \log |\E[e^{tX}e^{isX}]|= h(t+is) \quad \text{for} \quad |t+is|\leq r.
\end{equation}

Define $r_1:= r/(2\kappa)$.

\begin{lemma} \label{l:b-decreasing}
We have for $|t|\leq r_1$ and $0\leq s\leq s'\leq r_1$
$$h(t+is)-h(t+is')+1\geq 0.$$
\end{lemma}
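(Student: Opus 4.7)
The plan is to apply Lemma \ref{l:key} to the translated harmonic function $h_t(u) := h(t+u)$ for each fixed $t$ with $|t|\leq r_1$, on a suitably chosen rectangle $[-a,a]\times[0,b]$. Since the conclusion is trivially bounded by a constant, I may assume without loss of generality that $r$, and hence $\kappa$, is large (this is already explained in the paragraph immediately after the statement of Proposition \ref{p:main-1}). I would take
\[ a := r/2, \qquad b := 2 r_1 = r/\kappa. \]
The trailing factor $b \geq 2r_1$ is dictated by the need for the conclusion of Lemma \ref{l:key} to cover all pairs $s \leq s'$ in $[0,r_1]$, since the lemma only concludes on the range $0 \leq s \leq s' \leq b-s$; with $b = 2r_1$ one has $s+s' \leq 2r_1 = b$ automatically. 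The choice $a = r/2$ is then forced by the need to satisfy $e^{\pi a/b} \geq 4\max|h_t|+1$ with room to spare, given the crude bound $\max|h| \leq 4e^\kappa$ on $\overline{\D(0,2r/3)}$ established earlier.

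Next I would verify the three hypotheses of Lemma \ref{l:key} for $h_t$ on $[-a,a]\times [0,b]$. For the containment, the maximum modulus of $t + u$ for $u$ in this rectangle is bounded by $|t| + \sqrt{a^2 + b^2} \leq r/(2\kappa) + \sqrt{(r/2)^2 + (r/\kappa)^2}$, which is strictly less than $2r/3$ for $\kappa$ large; hence $h_t$ is well-defined, harmonic in a neighbourhood of the rectangle, and satisfies $|h_t| \leq 4 e^\kappa$ there. For the exponential-size condition, $e^{\pi a/b} = e^{\pi \kappa/2} \geq 17 e^\kappa \geq 4 \max|h_t|+1$ holds as soon as $\kappa$ is at least a small explicit constant, since $\pi/2 > 1$. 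For the ridge inequality, \eqref{e:h-ridge} gives $h(T) \geq h(T+is)$ whenever $|T+is| \leq r$; setting $T = t+\tau$ for $\tau \in [-a,a]$ and $s \in [0,b]$ and invoking the containment above yields $h_t(\tau) \geq h_t(\tau+is)$.

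Having verified the three hypotheses, Lemma \ref{l:key} applied to $h_t$ yields
\[ h_t(is) - h_t(is') + 1 \geq 0 \qquad \text{for } 0 \leq s \leq s' \leq b-s, \]
which, translated back, is exactly $h(t+is) - h(t+is') + 1 \geq 0$ for $0 \leq s \leq s' \leq 2r_1 - s$. Since we only need this for $s, s' \in [0, r_1]$ and in that range $2r_1 - s \geq r_1 \geq s'$, the desired inequality follows for every such $t$, $s$, $s'$.

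The only real obstacle is the bookkeeping: one must juggle three competing constraints on $(a,b)$, namely (i) the translated rectangle fits inside $\overline{\D(0,2r/3)}$ uniformly in $|t|\leq r_1$, (ii) $e^{\pi a/b}$ dominates $4\max|h_t|+1$ despite $\max|h_t|$ being as large as $4e^\kappa$, and (iii) $b$ is at least $2r_1$ so the conclusion of Lemma \ref{l:key} covers the full target range. The geometric room is created by the specific choice $b = r/\kappa$, which makes $a/b = \kappa/2$ and so the left-hand side $e^{\pi a/b} = e^{\pi \kappa/2}$ beats $e^\kappa$ by a definite exponential margin; no subtler estimate on $h$ is required. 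Once this geometric window is identified, the rest is a direct application of Lemma \ref{l:key}.
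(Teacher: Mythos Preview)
Your proposal is correct and follows essentially the same approach as the paper: you fix $t$, translate to $h_t(u)=h(t+u)$, apply Lemma~\ref{l:key} on the rectangle $[-r/2,r/2]\times[0,2r_1]$, and verify the three hypotheses using the bound $|h|\leq 4e^\kappa$ on $\overline{\D(0,2r/3)}$ together with \eqref{e:h-ridge}. The choice $a=r/2$, $b=2r_1$ and the resulting inequality $e^{\pi a/b}=e^{\pi\kappa/2}\geq 4\max|h_t|+1$ match the paper's argument exactly.
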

\proof
Fix $t\in [-r_1,r_1]$ and define 
$h_t(u):= h(t+u)$  for all $u\in [-r/2,r/2] \times [0,2r_1]$. 
Note that since $\kappa$ is large, we have for $u \in  [-r/2,r/2] \times [0,2r_1]$
$$|t+u|^2 \leq (r_1+r/2)^2 + 4r_1^2  < (2r/3)^2.$$
Thus, $h_t$ is a harmonic function on a neighborhood of $ [-r/2,r/2] \times [0,2r_1]$ and
$$ 4\max_{[-r/2,r/2] \times [0,2r_1]} |h_t| + 1 \leq 4 \sup_{\D(0,2r/3)} |h(u)| + 1 \leq 16e^{\kappa} + 1 \leq e^{\kappa+4}  \leq e^{\pi(r/2)/(2r_1)}. $$
Moreover, by \eqref{e:h-ridge}, we have for all $x\in [-r/2,r/2]$ and $y\in [0,2r_1]$
 $$h_t(x) = h(t+x) \geq h(t+x+iy) = h_t(x+iy).$$  
 Applying Lemma \ref{l:key} gives
$$h_t(is) - h_t(is') + 1 \geq 0 \quad \text{for} \quad 0 \leq s \leq s' \leq r_1,$$
or equivalently,
$$h(t+is) - h(t+is') + 1 \geq 0 \quad \text{for} \quad 0 \leq s \leq s' \leq r_1.$$
The lemma follows.
\endproof

Define $r_2:=r_1/3$.

\begin{lemma} \label{l:max}
There is a universal constant $c_0>0$ such that for every $0 \leq r'\leq r_2$ we have
$$|h(u)|\leq c_0\max(|h(ir')|,1)$$
for $u=t+is$ with $|t|\leq \sqrt{2} r'$ and $|s|\leq \sqrt{2} r'$.
\end{lemma}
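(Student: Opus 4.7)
The goal is to bound $|h(u)|$ on the square $\{|t|,|s|\leq\sqrt 2\, r'\}$ (contained in $\overline{\D(0,2r')}$) by $c_0\max(|h(ir')|,1)$; note $3r'\leq r_1$, so $h$ is harmonic on a neighborhood of $\overline{\D(0,3r')}$. My plan proceeds in three stages. First, let $M:=\sup_{|u|=3r'}h$. Applying Harnack's inequality to the nonnegative harmonic function $M-h$ on $\D(0,3r')$, centered at the origin with $h(0)=0$, yields $M/5\leq M-h(u)\leq 5M$ for $|u|\leq 2r'$, whence $|h(u)|\leq 4M$ on the whole square. Second, the ridge property $h(t+is)\leq h(t)$ established in the proof of Proposition~\ref{p:main-1}, together with the convexity of the log-moment generating function $h(t)=\log\E[e^{tX}]$ on $[-3r',3r']$ and the facts $h(0)=0$, $h\geq 0$, identifies $M=\max(h(3r'),h(-3r'))$ as the value of $h$ at an endpoint of the real diameter of the circle $|u|=3r'$.

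The crux is the third stage: establishing $M\leq c_1(|h(ir')|+1)$ for a universal constant $c_1$. My plan is to combine the Poisson representation
\[
h(ir')=\frac{1}{2\pi}\int_0^{2\pi}P_{ir',3r'}(\theta)\,h(3r'e^{i\theta})\,d\theta
\]
with Lemma~\ref{l:b-decreasing}. The Poisson kernel $P_{ir',3r'}$ takes values in $[1/2,2]$ on the circle and is noticeably larger at $\theta=\pi/2$ (upper imaginary direction, $P=2$) than near the real directions $\theta=0,\pi$ (where $P\approx 0.8$). If $M$ is large, Cauchy gradient estimates for $h$ on a slightly enlarged sub-disk show that $h$ is comparable to $M$ on an arc of positive angular width around $\theta=0,\pi$; the mean-value identity $\int_0^{2\pi}h(3r'e^{i\theta})\,d\theta=0$ then forces comparable negative mass elsewhere on the circle, necessarily concentrated near $\theta=\pm\pi/2$ by the ridge. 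Applying Lemma~\ref{l:b-decreasing} vertically with $t=0$, $s=r'$, $s'=3r'$ gives $h(3ir')\leq h(ir')+1$, a bound which transfers to $h$ at neighboring points of the upper imaginary arc by harmonicity. Combining these ingredients with the asymmetric Poisson weighting yields $h(ir')\leq -cM+C$ for universal $c,C>0$, equivalently $M\leq c_1(|h(ir')|+1)$.

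Combining the three stages then gives $|h(u)|\leq 4c_1(|h(ir')|+1)\leq 8c_1\max(|h(ir')|,1)$, proving the lemma with $c_0=8c_1$. The main obstacle is executing Stage 3 quantitatively, as it requires careful estimation of both the angular width of the arc around $\theta=0,\pi$ on which $h$ remains comparable to $M$, and the propagation of the bound $h(3ir')\leq h(ir')+1$ to neighboring points on the circle. An alternative implementation might bypass the gradient estimate entirely by bounding the Taylor coefficients of $f$ via Cauchy estimates on $\D(0,r_1)$ and exploiting that the ridge forces $\Re R(ir')$ to cancel only a bounded fraction of $(r')^2/2$ in the expansion $h(ir')=-(r')^2/2+\Re R(ir')$, which would directly deliver $M\leq c_1(|h(ir')|+1)$ without delicate harmonic-measure estimates.
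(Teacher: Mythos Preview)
Your Stages 1 and 2 are correct: Harnack applied to the nonnegative harmonic function $M-h$ on $\D(0,3r')$ gives $|h(u)|\leq 4M$ on the square, and convexity of the cumulant generating function $t\mapsto\log\E[e^{tX}]$ together with the ridge property indeed identifies $M=\max(h(3r'),h(-3r'))$. However, Stage~3 has a genuine gap. The assertion that the negative mass of $h$ on the circle $|u|=3r'$ is ``necessarily concentrated near $\theta=\pm\pi/2$ by the ridge'' is unsupported: the ridge inequality $h(t+is)\leq h(t)$ only bounds $h$ from \emph{above} near the imaginary axis (since $h(0)=0$), and says nothing about where $h$ is most negative. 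The proposed ``Cauchy gradient estimates on a slightly enlarged sub-disk'' to show $h\approx M$ on a definite arc near the real axis also fails: the only a~priori bound on a larger disk is $|h|\leq 4e^\kappa$, not comparable to $M$, and convexity of $h$ on the real line gives no upper bound on $h(3r'(1+\epsilon))/h(3r')$. Your suggested alternative via Cauchy estimates on Taylor coefficients would be circular, since those bounds (Lemma~\ref{l:large-index} and onward) are established \emph{using} the present lemma.

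The paper circumvents these difficulties by a different mechanism. Working on the square $[-3r'/2,3r'/2]^2$ mapped conformally to $\D$, it takes $M$ to be the $L^1$ norm of the boundary values, locates via mean-zero a boundary point $\zeta_0$ with $h(\zeta_0)\leq -M/2$, and uses Lemma~\ref{l:b-decreasing} once to push this to a point $\zeta_1$ on the top edge. The key idea you are missing is what comes next: the paper applies Harnack's inequality not to $M-h$ but to the auxiliary functions
\[
\phi_1(u)=h(u)-h(u+ir')+1,\qquad \phi_2(u)=h(u)-h(\bar u+i3r'/2)+1,
\]
which are \emph{nonnegative harmonic} precisely by Lemma~\ref{l:b-decreasing}. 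Harnack on $\phi_2$ propagates the large value $\phi_2(\Re\zeta_1)\geq M/2$ into the interior, where $\phi_2$ agrees with $\phi_1$; a second application of Harnack on $\phi_1$ then reaches $\phi_1(0)=-h(ir')+1$, giving $|h(ir')|+1\gtrsim M$. The lesson is to use Lemma~\ref{l:b-decreasing} not as a one-off pointwise inequality but to manufacture nonnegative harmonic functions on rectangles, to which Harnack applies with universal constants.
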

\proof
We will define a number $M>0$ such that 
$|h(u)|\lesssim M$ and $|h(ir')|+1\gtrsim M$. The implicit constants we use in this lemma are universal.

Let $\Pi:(-3r'/2,3r'/2)^2 \to\D$ be the unique conformal map such that $\Pi(0)=0$ and  $\Pi'(0)\in\R_+$. It can be extended to a continuous bijective map
$\Pi:[-3r'/2,3r'/2]^2 \to \overline\D$. Define $g:=h\circ\Pi^{-1}$ and $g^\pm:=\max(\pm g,0)$.
Let $P(z,\theta)\geq 0$ denote the Poisson kernel on the unit disk $\D$ with $z\in\D$ and $\theta\in [0,2\pi]$. By Poisson's formula, we have 
\begin{eqnarray} \label{e:Poisson}
h(u)&=&g(\Pi(u)) \ = \ \int_0^{2\pi} P(\Pi(u),\theta) g(e^{i\theta}) d\theta \\
&=&\int_0^{2\pi} P(\Pi(u),\theta) g^+(e^{i\theta}) d\theta-\int_0^{2\pi} P(\Pi(u),\theta) g^-(e^{i\theta}) d\theta. \nonumber
\end{eqnarray}
Since $h(0)=0$ and $P(0,\theta)=1/(2\pi)$, we obtain 
$$\int_0^{2\pi} g^+(e^{i\theta}) d\theta=\int_0^{2\pi} g^-(e^{i\theta}) d\theta.$$

Define 
$$M := {1\over 2\pi}\int_0^{2\pi} |g(e^{i\theta})| d\theta  ={1\over \pi} \int_0^{2\pi} g^-(e^{i\theta}) d\theta \leq 2 \max_{[0,2\pi]} g^-(e^{i\theta}).$$
By the last inequality, we have $g(e^{i\theta})\leq -M/2$ for some $\theta \in [0,2\pi]$. It follows that 
 there is $\zeta_0$ in the boundary of $[-3r'/2,3r'/2]^2$ such that $h(\zeta_0)\leq -M/2$. Using the inequality in Lemma \ref{l:b-decreasing} and the fact that $h$ is symmetric, we find $\zeta_1\in [-3r'/2,3r'/2]\times \{3r'/2\}$ such that $h(\zeta_1)\leq -M/2+1$.

Now, consider two functions
$$\phi_1(u):=h(u)-h(u+ir')+1$$
and
$$\phi_2(u):=h(u)-h(\overline u+i3r'/2)+1.$$
Lemma \ref{l:b-decreasing} and the fact that $h$ is symmetric imply that the above functions are harmonic and non-negative on $(-3r',3r')\times (-r'/2,2r')$ and $(-3r',3r')\times (-3r'/2,3r'/4)$ respectively. 

Observe that $\phi_2(\Re(\zeta_1))\geq M/2$ because $h\geq 0$ on $[-r,r]$. By Harnack's inequality, we have $\phi_2(t+is)\gtrsim M$ for $|t|\leq 3r'/2$ and 
$-r'\leq s\leq r'/2$.  It follows that $\phi_1(ir'/4)=\phi_2(ir'/4)\gtrsim M$. By Harnack's inequality again, we have $\phi_1(0)\gtrsim M$.
Hence, $|h(ir')| +1 \gtrsim M$ because $h(0)=0$. 

On another hand, 
observe that if $K$ is a compact subset of $\D$, the above Poisson's formula \eqref{e:Poisson} implies that $\max_K|g|\leq c_K M$ for some constant $c_K$ depending only on $K$. We deduce from the definition of $g$ that $|h(u)|\lesssim M$ for  $u=t+is$ with $|t|\leq \sqrt{2} r'$ and $|s|\leq \sqrt{2} r'$. 
Now, the lemma follows easily.
\endproof

The following is a version of \cite[Lemma 6.1]{MS-2}. 

\begin{lemma} \label{l:large-index}
There exists a universal 
integer $N\geq 3$ such that for $1\leq r' \leq r_2$  we have
$$\sum_{n\geq N} |a_n|r'^n  \leq {1\over 300} \sum_{n=2}^{N-1}|a_n|r'^n .$$
\end{lemma}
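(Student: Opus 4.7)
My plan is to combine the Borel–Carathéodory theorem with Cauchy's inequalities, using the bound on $h=\Re f$ supplied by Lemma \ref{l:max} to control the Taylor tail of $f$, and then to leverage the trivial contribution $a_2=1/2$ to obtain a lower bound on the partial sum $\sum_{n=2}^{N-1}|a_n|r'^n$.

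First, I would set $M:=c_0\max(|h(ir')|,1)$. Lemma \ref{l:max} gives $|h(u)|\leq M$ on the square $\{|t|,|s|\leq \sqrt{2}\,r'\}$, hence on $\overline{\D(0,\sqrt{2}\,r')}$. Since $f(0)=0$ and $\Re f\leq M$ on this disk, the Borel–Carathéodory theorem yields $|f(u)|\leq CM$ on $|u|\leq \lambda r'$ for any fixed $\lambda\in (1,\sqrt{2})$ (e.g.\ $\lambda=5/4$), with $C$ a universal constant. Cauchy's estimate on $|u|=\lambda r'$ then gives $|a_n|\,r'^n\leq CM\lambda^{-n}$, and summing the geometric tail,
$$S_2:=\sum_{n\geq N}|a_n|\,r'^n \leq C'M\lambda^{-N}$$
for a universal constant $C'$. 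For the lower bound, since $a_2=1/2$ and $r'\geq 1$, the partial sum $S_1:=\sum_{n=2}^{N-1}|a_n|\,r'^n$ satisfies $S_1\geq r'^2/2\geq 1/2$, while the triangle inequality gives $|h(ir')|\leq |f(ir')|\leq S_1+S_2$.

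I would then conclude by a two-case argument on the size of $|h(ir')|$. If $|h(ir')|\leq 1$, then $M=c_0$ and $S_2\leq C'c_0\lambda^{-N}$, which is bounded by $S_1/300$ as soon as $N$ exceeds a universal threshold (using $S_1\geq 1/2$). If $|h(ir')|>1$, then $M\leq c_0(S_1+S_2)$, so $S_2\leq C'c_0\lambda^{-N}(S_1+S_2)$; choosing $N$ large enough that $C'c_0\lambda^{-N}\leq 1/2$, I may absorb $S_2$ into the left-hand side to obtain $S_2\leq 2C'c_0\lambda^{-N}S_1$, which is again at most $S_1/300$ for all $N$ past a universal threshold. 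Taking the larger of the two thresholds furnishes the required universal integer $N\geq 3$.

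The main obstacle I anticipate is the self-referential nature of the tail estimate: my bound on $S_2$ involves $M$, which depends on $|h(ir')|$, while $|h(ir')|$ is itself controlled only by the full series $S_1+S_2$. The two-case split above is precisely what resolves this circularity, absorbing $S_2$ into $S_1+S_2$ when $|h(ir')|$ is large and relying on the a priori lower bound $S_1\geq 1/2$ when $|h(ir')|$ is small.
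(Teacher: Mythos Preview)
Your proposal is correct and follows essentially the same approach as the paper: use Lemma~\ref{l:max} to bound $h=\Re f$ on the disk of radius $\sqrt{2}\,r'$, deduce geometric decay of the coefficients, and absorb the tail via the inequality $\max(|h(ir')|,1)\lesssim S_1+S_2$ (which the paper writes as a single estimate rather than your two-case split). The only cosmetic difference is that the paper extracts the coefficient bound $|a_m|\lesssim (\sqrt{2}r')^{-m}\max|h|$ directly from the Schwarz integral formula for $f$ in terms of $\Re f$ on $|u|=\sqrt{2}r'$, whereas you pass through Borel--Carath\'eodory on a slightly smaller disk before applying Cauchy's inequalities.
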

\proof
By Lemma \ref{l:max}, the definition of $h$  and $a_2=1/2$, we have for $1\leq r'\leq r_2$
\begin{equation} \label{e:h-upper}
|h(u)|\leq c_0\max(|h(ir')|,1) \lesssim \sum_{n\geq 2} |a_n|r'^n
\end{equation}
when $|u|\leq \sqrt{2} r'$. 
Using a Cauchy's type formula and that $f(0)=0$, we also have
\begin{eqnarray*}
f(u) &=& i\Im f(0) +{1\over 2\pi} \int_0^{2\pi} {e^{i\theta}+ (\sqrt{2} r')^{-1} u \over e^{i\theta}- (\sqrt{2} r')^{-1} u} \Re f(\sqrt{2} r'e^{i\theta}) d\theta \\
&=& {1\over 2\pi} \int_0^{2\pi} \Big[{2\sqrt{2} r' e^{i\theta} \over \sqrt{2} r' e^{i\theta}-  u} -1\Big] h(\sqrt{2} r'e^{i\theta}) d\theta.
\end{eqnarray*}
By taking derivatives at 0 and using  \eqref{e:h-upper}, we obtain for $m\geq 2$
$$|a_m| =|f^{(m)}(0)/m!| \lesssim (\sqrt{2} r')^{-m} \max_{|u|=\sqrt{2}r'} |h(u)|\lesssim 2^{-m/2} r'^{-m} \sum_{n\geq 2} |a_n|r'^n.$$
Hence,
$$\sum_{n\geq N} |a_n|r'^n \lesssim 2^{-N/2}  \sum_{n\geq 2} |a_n|r'^n = 2^{-N/2} \sum_{n\geq N} |a_n|r'^n + 2^{-N/2} \sum_{n=2}^{N-1} |a_n|r'^n.$$
We easily obtain the lemma by taking $N$ large enough.
\endproof

We continue the proof of Proposition \ref{p:main-1} and study $a_n$ for $n<N$ using an improvement of Marcinkiewicz's argument. 
Consider the points $P_n=(n,\log(|a_n|r_2^n))$ in $\R^2$ for $2\leq n\leq N-1$.
Define also
$$P_{N}:=\big(N, \max_{2\leq n \leq N-1} \log(|a_n|r_2^n) \big).$$
This sequence of points admits a unique subsequence
$$P_{n_1}, \ldots, P_{n_{l-1}}, P_{n_l} \quad \text{with} \quad 2=n_1<\cdots<n_{l-1}<n_l=N,$$
such that

\begin{itemize}
\item If $\Gamma_j$ denotes the segment joining $P_{n_j}, P_{n_{j+1}}$ and $\Gamma$ is the union of the $\Gamma_j$, then
no point $P_n$,  with $2\leq n\leq N$, is above the polygon curve $\Gamma$;
\item If $\theta_j$ denotes the slope of $\Gamma_j$, then the sequence $(\theta_j)_{1\leq j\leq l-1}$ is strictly decreasing and $\theta_{l-1}=0$.
\end{itemize}

For example, if $\log(|a_n|r_2^n) \leq \log(|a_2|r_2^2)$ for $2\leq n\leq N-1$, then $l=2$ and $\Gamma$ is just a segment parallel to the abscissa axis of $\R^2$. Note that $\Gamma$ can be seen as the graph of a concave non-decreasing function over the interval $[2,N]$ which is constant on $[n_{l-1},N]$.

\begin{lemma} \label{l:Gamma}
We have
$\theta_{j-1}-\theta_j \leq 10$ for $2\leq j\leq l-1$ and $\theta_1\leq 10N$.
\end{lemma}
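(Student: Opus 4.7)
The plan is to first establish the main slope-drop inequality $\theta_{j-1} - \theta_j \le 10$ for interior vertices $2 \le j \le l-1$, and then derive $\theta_1 \le 10N$ immediately by telescoping: since $\theta_{l-1}=0$, one has $\theta_1 = \sum_{j=2}^{l-1}(\theta_{j-1}-\theta_j)$, a sum of at most $l-2 \le N-2$ terms each bounded by $10$, giving $\theta_1 \le 10(N-2) \le 10N$.

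For the main inequality I would argue by contradiction. Assume $D := \theta_{j-1}-\theta_j > 10$ at some interior $j$, and work at the adapted scale $r' := r_2 e^{-(\theta_{j-1}+\theta_j)/2}$ (the extremal case where this falls outside $[1,r_2]$ can be handled by working at $r'=1$ with minor modifications). At this scale, the rescaled slopes satisfy $\tilde\theta_{j-1} = D/2 > 0$ and $\tilde\theta_j = -D/2 < 0$, so the rescaled polygon attains its unique maximum at $n_j$. By concavity, for every $n$ one has $|a_n|(r')^n \le V\, e^{-(D/2)|n-n_j|}$ with $V := |a_{n_j}|(r')^{n_j}$, and summing the two-sided geometric decay gives the sharp tail bound $\sum_n |a_n|(r')^n \le V\cdot(1+e^{-D/2})/(1-e^{-D/2})$, which is at most $1.05\, V$ when $D \ge 10$.

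I would then invoke the Cauchy-Poisson estimate derived in the proof of Lemma \ref{l:large-index}, namely $|a_{n_j}|(r')^{n_j} \lesssim 2^{-n_j/2} \max(|h(ir')|,1) \lesssim 2^{-n_j/2}\sum_n|a_n|(r')^n$, combined with the above tail bound to deduce $V \lesssim 2^{-n_j/2} V$, i.e., $2^{n_j/2}$ is bounded by a universal constant. This forces $n_j \le K$ for some universal $K$. To finally close the argument for small $n_j \in [3, K]$, one exploits the pinning constraint $V_1 = r_2^2/2$ (from $a_2=1/2$ via $\sigma=1$), the Cauchy bound $V_j \le 4C e^\kappa 2^{-n_j/2}$ at scale $r_2$, and the derived inequality $r_2^2 \lesssim e^\kappa$; a careful enumeration of the few possible polygon shapes passing through $P_2$, compatible with these estimates and the concavity condition, rules out any configuration with $D > 10$.

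The main obstacle I anticipate is the small-$n_j$ regime, where the Cauchy-Poisson bound alone does not yield a direct contradiction and one must leverage the combinatorial structure of the polygon together with the exact value $a_2=1/2$ to eliminate pathological shapes. A secondary technical difficulty is treating the case where $(\theta_{j-1}+\theta_j)/2 > \log r_2$, in which the adapted scale would fall below $1$ and one must instead work at $r'=1$ with correspondingly adjusted decay estimates.
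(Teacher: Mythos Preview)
Your telescoping derivation of $\theta_1 \le 10N$ from the slope-drop inequality is fine and matches the paper's. However, your argument for the slope-drop inequality has a genuine gap.

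Your Cauchy--Poisson step correctly shows that if $D>10$ at some interior vertex, then $n_j$ is bounded by a universal constant $K$. But your proposed treatment of the small-$n_j$ regime is not an argument: ``a careful enumeration of polygon shapes'' together with the pinning $a_2=1/2$ and the Cauchy bound at scale $r_2$ does not rule out large slope drops at small $n_j$. Indeed, at the adapted scale the single term $|a_{n_j}|(r')^{n_j}$ essentially \emph{is} the full sum, so the Cauchy--Poisson inequality becomes $1\lesssim 2^{-n_j/2}$, which is vacuous once $n_j\le K$. The additional constraints you list (value of $a_2$, the bound $r_2^2\lesssim e^\kappa$) do not supply the missing information: polygon concavity and these size bounds are consistent with, e.g., a sharp spike at $n_j=3$ and arbitrarily large $D$. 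There is no finite list of ``shapes'' to enumerate, since the heights $|a_n|r_2^n$ vary continuously. A secondary issue: you do not choose $j$ to be the maximal violating index, so you have no a~priori bound on $\theta_{j-1}$ and hence no guarantee that your adapted scale $r'=r_2 e^{-(\theta_{j-1}+\theta_j)/2}$ lies in $[1,r_2]$; ``minor modifications at $r'=1$'' does not repair this.

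The paper closes the argument by a different mechanism that you do not invoke at all: it uses the ridge inequality $\Re f(t+is)\le f(t)$ \emph{directly}. Choosing $j$ maximal (so $\theta_j\le 10(N-1)$), setting $\theta=\theta_j+5$ and $t=\pm e^{-\theta}r_2$, one constructs a complex point $u=t+is$ with $\arg u$ in a specific range so that $a_{n_j}\Re(u^{n_j})\ge (\sqrt{3}/2)|a_{n_j}||u|^{n_j}$ while $2|t|/\sqrt{3}\le|u|\le 8|t|$. The geometric decay you derived then makes the $n_j$-th term dominate both $f(t)$ and $\Re f(u)$, yielding $\Re f(u)>|f(t)|\ge f(t)$, a contradiction. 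This argument is uniform in $n_j\ge 2$ and does not require any case enumeration; it is precisely the piece your proposal is missing.
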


\proof
Since $\theta_{l-1}=0$, the second inequality is a direct consequence of the first one. We prove now the first inequality by contradiction. Recall that we have $ \Re f(t+is) \leq f(t)$ for $t,s\in\R$. 
The goal is to find $t,s$ such that $\Re f(t+is) > |f(t)|$, which contradicts the above inequality.

Let $2 \leq j \leq l-1$ be the largest integer for which the inequality in the lemma is wrong, that is
$$ \theta_{j-1} - \theta_j>10 \quad,\quad \theta_{k-1} - \theta_k \leq 10 \quad \text{for all} \quad j < k \leq l-1.$$
Since $\theta_{l-1}=0$, the maximality of $j$ implies that $\theta_j \leq 10(N-1)$. Define $\theta:=\theta_j+5$. We have
$$5 \leq \theta \leq 10N \quad \text{and} \quad \theta_{j-1} - \theta >5.$$

\noindent
{\bf Claim.} There is a complex number $u=t+is$ such that $t=\pm e^{-\theta}r_2$,  
$a_{n_j}\Re (u^{n_j})\geq \sqrt{3}|a_{n_j}| |u|^{n_j}/2$ and $2|t|/\sqrt{3}\leq |u|\leq 8|t|<r_2$.

\proof[Proof of Claim]
We will choose $t=\pm e^{-\theta}r_2$ and  $\arg(u)$ in $[\pi/6,11\pi/24]\cup [-11\pi/24, -\pi/6]$.
Define $\alpha:=\arg(u)$. Then $|u|/|t|=1/|\cos\alpha|$ and it is easy to check that $2|t|/\sqrt{3}\leq |u|\leq 8|t|<r_2$. 
When $\alpha$ runs over the above set and $n_j\not=3,4,6$, $\arg(u^{n_j})=n_j\alpha$ takes all possible values modulo $2\pi$.
So, the claim is clear in this case. When $n_j=3,4,6$, it is not difficult to check that there is a value of $\alpha$ in the considered set such that $\cos(n_j\alpha)\geq \sqrt{3}/2$ (resp. $\cos(n_j\alpha)\leq -\sqrt{3}/2$). Depending on the sign of $a_{n_j}$, the claim holds for one of these values of $\alpha$.
\endproof

We deduce from the properties of $\theta$ that $ e^{-10 N} r_2 \leq |t| \leq e^{-5} r_2$. 
Since we only consider big enough $r$, we can assume that $|t|\geq 1$. This allows us to apply Lemma \ref{l:large-index} for $t$ and $u$.

Denote by $L^-$ (respectively, $L^+$) the line through $P_{n_j}$ with slope $\theta_{j-1}$ (respectively, $\theta_j$). For any $2 \leq n \leq n_j-1$, $P_n$ is under the line $L^-$. This implies
$$ \log(|a_n|r_2^n) - \log(|a_{n_j}|r_2^{n_j}) \leq \theta_{j-1}(n - n_j)$$
hence
$$ |a_n|r_2^n \leq |a_{n_j}|r_2^{n_j} e^{\theta_{j-1}(n - n_j)}.$$
Multiplying both sides by $e^{-n\theta}$, we have 
$$ |a_n|(e^{-\theta}r_2)^n \leq |a_{n_j}|(e^{-\theta}r_2)^{n_j} e^{(\theta_{j-1}-\theta)(n - n_j)} \leq |a_{n_j}|(e^{-\theta}r_2)^{n_j} e^{5(n - n_j)}. $$
Hence,
\begin{equation} \label{eq:lm2.4.1}
|a_n||t|^n \leq |a_{n_j}||t|^{n_j}  e^{5(n - n_j)}, \quad 2\leq n \leq n_j-1.
\end{equation}
It follows that
$$\sum_{n=2}^{n_j-1} |a_n||t|^n \leq |a_{n_j}||t|^{n_j} \sum_{k=1}^{\infty} e^{-5k}=|a_{n_j}||t|^{n_j} {e^{-5}\over 1 -e^{-5}} < {1\over 100} |a_{n_j}||t|^{n_j}. $$

Similarly, for any $n_j+1 \leq n \leq N - 1$, we have that $P_n$ is under $L^+$. With the same argument, we obtain
\begin{equation} \label{eq:lm2.4.2}
|a_n||t|^n \leq |a_{n_j}||t|^{n_j}  e^{-5(n - n_j)}, \quad n_j + 1 \leq n \leq N - 1.
\end{equation}
Hence,
$$\sum_{n_j+1}^{N-1} |a_n||t|^n \leq |a_{n_j}||t|^{n_j} \sum_{k=1}^{\infty} e^{-5k}=|a_{n_j}||t|^{n_j} {e^{-5}\over 1 -e^{-5}} < {1\over 100} |a_{n_j}||t|^{n_j}. $$

Consider the following expansion of $f(t)$
$$f(t)=a_{n_j}t^{n_j} + \sum_{n=2}^{n_j-1} a_n t^n + \sum_{n=n_j+1}^{N-1} a_n t^n + \sum_{n=N}^{\infty} a_n t^n.$$
Denote by $S_1,S_2$ and $S_3$ the three summations in the above expansion of $f(t)$. From the above discussion, we deduce
$$|S_1|< \frac{1}{100} |a_{n_j}||t|^{n_j} \quad \text{and} \quad |S_2|< \frac{1}{100} |a_{n_j}||t|^{n_j} .$$
We apply Lemma \ref{l:large-index} for $t$ and obtain
$$|S_3|\leq {1\over 300} \Big(|a_{n_j}||t|^{n_j}+ |S_1|+|S_2|\Big)\leq {1\over 100} |a_{n_j}||t|^{n_j}.$$
Hence,
\begin{equation} \label{e:f(t)}
 |f(t)| \leq {103\over 100} |a_{n_j}||t|^{n_j}.
\end{equation}

Now, consider the following expansion of $f(u)$
\begin{equation} \label{e:f(u)}
f(u)=a_{n_j}u^{n_j} + \sum_{n=2}^{n_j-1} a_n u^n + \sum_{n=n_j+1}^{N-1} a_n u^n + \sum_{n=N}^{\infty} a_n u^n.
\end{equation}
Denote by $S_1',S_2'$ and $S_3'$ the three summations in this expansion. Using \eqref{eq:lm2.4.1} gives
\begin{align*}
 |S_1'| &\leq \sum_{n=2}^{n_j-1} |a_n| |u|^n = |a_{n_j}||u|^{n_j} \sum_{n=2}^{n_j-1} \frac{|a_n|}{|a_{n_j}|}|u|^{n-n_j}\\
 &\leq |a_{n_j}||u|^{n_j} \sum_{n=2}^{n_j-1} \frac{|a_n|}{|a_{n_j}|}|t|^{n-n_j}  \leq |a_{n_j}||u|^{n_j} \frac{e^{-5}}{1-e^{-5}} < \frac{1}{100}|a_{n_j}||u|^{n_j} .
\end{align*}
Using \eqref{eq:lm2.4.2} and the above Claim, we can bound  $S_2'$ as follows
\begin{align*}
 |S_2'| &\leq \sum_{n=n_j+1}^{N-1} |a_n| |u|^n = |a_{n_j}||u|^{n_j} \sum_{n=n_j+1}^{N-1} \frac{|a_n|}{|a_{n_j}|}|u|^{n-n_j}\\
 &\leq |a_{n_j}||u|^{n_j} \sum_{n=n_j+1}^{N-1} \frac{|a_n|}{|a_{n_j}|}(8|t|)^{n-n_j} \leq |a_{n_j}||u|^{n_j} \sum_{n=n_j+1}^{N-1} 8^{n-n_j} e^{-5(n-n_j)}  \\
 &\leq |a_{n_j}||u|^{n_j} \sum_{k=1}^{\infty} (8 e^{-5})^{k}  = |a_{n_j}||u|^{n_j} \frac{8 e^{-5}}{1-8 e^{-5}} \leq {8\over 100} |a_{n_j}||u|^{n_j} \cdot
\end{align*}
Applying Lemma \ref{l:large-index} gives 
$$|S'_3|\leq {1\over 300} \Big(|a_{n_j}||u|^{n_j}+ |S'_1|+|S'_2|\Big)\leq {1\over 100} |a_{n_j}||u|^{n_j}.$$
Hence, using \eqref{e:f(u)}, the Claim, together with \eqref{e:f(t)}, we get
\begin{align*}
 \Re f(u) &\geq \Re \big[a_{n_j} u^{n_j}\big] - {1\over 10}|a_{n_j}||u|^{n_j}\geq \Big({\sqrt{3}\over 2}-{1 \over 10}\Big)|a_{n_j}||u|^{n_j} \\
 &\geq \Big({\sqrt{3}\over 2}-{1 \over 10}\Big) \Big({2\over \sqrt{3}}\Big)^3 |a_{n_j}| |t|^{n_j} >{103\over 100} |a_{n_j}| |t|^{n_j} \geq |f(t)|.
 \end{align*}
This is the contradiction we are looking for. The proof is now completed.
\endproof

Define $r_3:=e^{-10N}r_2$. Recall that $f(u)=u^2/2+R(u)$. 

\begin{lemma} \label{l:R-est}
We have $|a_n| \leq {1\over 2} r_3^{2-n}$ for $2\leq n\leq N-1$. 
In particular, we have $|R(it)|\leq c_1 |t|^3r_3^{-1}$ for $|t|\leq r_3$, where $c_1>0$ is a universal constant.
\end{lemma}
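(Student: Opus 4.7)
The plan is to read off the first inequality directly from the Newton polygon $\Gamma$ constructed before Lemma \ref{l:Gamma}, and then to derive the bound on $|R(it)|$ by splitting the tail of $R$ at the index $N$ and invoking Lemma \ref{l:large-index}.

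First, I would recall that each point $P_n=(n,\log(|a_n|r_2^n))$, for $2\leq n\leq N-1$, lies weakly below the concave polygon $\Gamma$, and that $\Gamma$ starts at $P_2=P_{n_1}$ with the maximal slope $\theta_1$. By Lemma \ref{l:Gamma} we have $\theta_1\leq 10N$, so concavity of $\Gamma$ gives, for every $2\leq n\leq N-1$,
\[
\log(|a_n|r_2^n)\ \leq\ \log(|a_2|r_2^2)+\theta_1(n-2)\ \leq\ \log\bigl(\tfrac{1}{2}r_2^2\bigr)+10N(n-2).
\]
Exponentiating and using $r_3=e^{-10N}r_2$ yields $|a_n|\leq \tfrac{1}{2}r_2^{2-n}e^{10N(n-2)}=\tfrac{1}{2}r_3^{2-n}$, which is the first assertion.

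For the second assertion, I would write, for $|t|\leq r_3$,
\[
|R(it)|\ \leq\ \sum_{n=3}^{N-1}|a_n|\,|t|^n\ +\ \sum_{n\geq N}|a_n|\,|t|^n.
\]
Using the bound $|a_n|\leq \tfrac{1}{2}r_3^{2-n}$ from above, the first sum is
\[
\sum_{n=3}^{N-1}|a_n|\,|t|^n\ \leq\ \tfrac{1}{2}r_3^2\sum_{n=3}^{N-1}(|t|/r_3)^n\ \leq\ C\,|t|^3/r_3,
\]
since $|t|/r_3\leq 1$ and $N$ is a fixed universal constant. For the tail $n\geq N$, I would first note that for $r$ large enough we have $r_3\geq 1$ (since $\kappa$ grows much slower than $r$ and $N$ is universal), so that Lemma \ref{l:large-index} is applicable at $r'=r_3$, giving
\[
\sum_{n\geq N}|a_n|r_3^n\ \leq\ \tfrac{1}{300}\sum_{n=2}^{N-1}|a_n|r_3^n\ \leq\ C\,r_3^2,
\]
by the first part. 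Since $|t|\leq r_3$ and $N\geq 3$, this implies
\[
\sum_{n\geq N}|a_n|\,|t|^n\ \leq\ (|t|/r_3)^N\sum_{n\geq N}|a_n|r_3^n\ \leq\ C\,r_3^2(|t|/r_3)^3\ =\ C\,|t|^3/r_3.
\]
Combining the two estimates yields $|R(it)|\leq c_1|t|^3 r_3^{-1}$ with a universal constant $c_1$.

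The only genuinely delicate point is the use of Lemma \ref{l:large-index}, which requires the argument $r'$ to lie in $[1,r_2]$; I would address this upfront by observing that the proposition is trivial when $r$ is bounded (by enlarging $A$), so that in practice $r_3\geq 1$ and also $r_3\leq r_2$, making the lemma applicable. The rest is a routine geometric-series calculation, and the Newton-polygon step above is essentially automatic from Lemma \ref{l:Gamma}.
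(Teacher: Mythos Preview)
Your proof is correct and follows essentially the same approach as the paper's proof: the first inequality is read off from the Newton polygon and the bound $\theta_1\leq 10N$ of Lemma \ref{l:Gamma}, and the bound on $|R(it)|$ is obtained by splitting at $N$, applying Lemma \ref{l:large-index} at $r'=r_3$ for the tail, and summing a geometric series for the low indices. Your explicit handling of the applicability condition $1\leq r_3\leq r_2$ for Lemma \ref{l:large-index} is a useful clarification that the paper leaves implicit.
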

\proof
By Lemma \ref{l:Gamma}, we have $\theta_1\leq 10N$. By the definition of $\theta_j$ and $r_3$, we deduce that $|a_n|r_3^n\leq |a_2| r_3^2$. We obtain the first inequality using that $a_2=1/2$. We prove now the second inequality.
By Lemma \ref{l:large-index} and the first assertion, we have
$$\sum_{N}^\infty |a_n| |t|^n \leq  |t/r_3|^N \sum_{N}^\infty |a_n| r_3^n \lesssim  |t/r_3|^N \sum_2^{N-1} |a_n| r_3^n\lesssim |t/r_3|^N r_3^2
\leq  |t|^3r_3^{-1}.$$
On the other hand, we have
$$\sum_3^{N-1} |a_n| |t|^n = \sum_3^{N-1} (|t|/r_3)^n |a_n| r_3^n \lesssim \sum_3^{N-1} (|t|/r_3)^n r_3^2  \lesssim (|t|/r_3)^3 r_3^2=|t|^3r_3^{-1}.$$
The result follows. 
\endproof

\proof[End of the proof of Proposition \ref{p:main-1}]
From the proof of the Berry-Esseen theorem \cite[p.538]{Fe}, for every positive number $T$, we have
$$\sup_{x\in \mathbb R} |F_X(x) - \Phi(x)| \lesssim  \int_{-T}^T \Big| {e^{f(it)} - e^{-t^2/2} \over t} \Big| dt + {1\over T} 
= \int_{-T}^T \Big| {e^{R(it)}-1\over t} \Big| e^{-t^2/2}dt + {1\over  T} \cdot $$
Choose $T=\delta r_3$ for some constant $\delta>0$ small enough.  We have
$$\sup_{x\in \mathbb R} |F_X(x) - \Phi(x)| \lesssim  \Big \{  \int_{|t| \leq 2\sqrt{\log r_3}} + \int_{|t|=2 \sqrt{\log r_3}}^{\delta r_3} \Big \} \Big| {e^{R(it)}-1\over t} \Big| e^{-t^2/2}dt + r_3^{-1} .$$

For the first integral, by Lemma \ref{l:R-est}, we have $|R(it)| \leq 1$ for $|t| \leq 2\sqrt{\log r_3}$ because $r_3$ is large. Using the fact $|e^R - 1| \leq e|R|$ for $|R| \leq 1$ and Lemma \ref{l:R-est} again, we deduce that
$$\int_{|t| \leq 2\sqrt{\log r_3}}  \Big| {e^{R(it)}-1\over t} \Big| e^{-t^2/2}dt \lesssim  \int_{|t| \leq 2\sqrt{\log r_3}} r_3^{-1} t^2 e^{-t^2/2} dt \lesssim r_3^{-1} \int_\R t^2 e^{-t^2/2}dt \lesssim r_3^{-1}.$$

For the second integral, observe that for $2\sqrt{\log r_3} \leq |t| \leq \delta r_3$ we have
$|R(it)| \leq t^2/4$ because $\delta$ is small.
Thus,
$$  \int_{|t|=2 \sqrt{\log r_3}}^{\delta r_3} \Big| {e^{R(it)}-1\over t} \Big| e^{-t^2/2}dt \lesssim  \int_{|t|=2\sqrt{\log r_3}}^{\delta r_3} e^{t^2/4} e^{-t^2/2}dt \lesssim\int_{2\sqrt{\log r_3}}^\infty te^{-t^2/4}dt \lesssim r_3^{-1}.$$

Combining everything, we have
$$\sup_{x\in \mathbb R} |F_X(x) - \Phi(x)| \lesssim r_3^{-1}\lesssim {\kappa \over r} \cdot $$
This ends the proof of the proposition.
\endproof

\proof[End of the proof of Theorem \ref{t:main-1}]
We can assume that $|\sigma-1|<1/2$ and hence $\sigma>1/2$  because the theorem is clear otherwise. 
Define 
$$\kappa := \log^+\log \E[e^{r|X|}] \leq \log^+\log (\E[e^{rX}]+\E[e^{-rX}])\leq 1+ \log^+\log \max_{|u|=r} |\E[e^{iuX}]|.$$ 
Define also
$$\hat X := {X - \E[X] \over \sigma } = {\bar X \over \sigma} \quad,\quad \hat r := {r\over 2} < r\sigma \quad,\quad \hat \kappa := \log^+ \log \E[e^{\hat r|\hat X|}].$$
By Jensen's inequality, we have
$$\E[r|X|] \leq \log \E[e^{r|X|}] \leq e^{\kappa}$$
and hence
$$\log \E[e^{\hat r |\hat X|}] \leq \log \E[e^{r\sigma |\hat X|}] = \log \E[e^{r|X - \E[X]|}] \leq \log \E[e^{r|X|}] + r\E[|X|] \leq 2e^{\kappa} < e^{\kappa+1}.$$
This implies $\hat \kappa < \kappa +1$. Applying Proposition \ref{p:main-1} for $\hat X, \hat r, \hat \kappa $ gives
$$\sup_{x\in \mathbb R} |F_{\hat X}(x) - \Phi(x)| \lesssim {\hat \kappa \over \hat r} \lesssim {\kappa+1\over r} \lesssim 
{1+ \log^+\log \max_{|u|=r} |\E[e^{iuX}]| \over r}\cdot $$
On another hand, we have 
\begin{eqnarray*}
\sup_{x\in\R}|F_{\bar X}(x)-\Phi(x)| &=& \sup_{x\in\R} |F_{\hat X}(x)-\Phi(\sigma x)| \\
&\leq&  \sup_{x\in\R} |F_{\hat X}(x)-\Phi(x)| + \sup_{x\in\R} |\Phi(\sigma x)-\Phi(x)|.
\end{eqnarray*}

So, in order to get the desired estimate, we only need to bound $|\Phi(\sigma x)-\Phi(x)|$ by $2|\sigma-1|$ for $x\in\R$.
Observe that since $|\sigma-1|\leq 1/2$, we have 
\begin{eqnarray*}
|\Phi(\sigma x)-\Phi(x)| &=& {1\over \sqrt{2\pi}} \Big|\int_{\sigma x}^x e^{-t^2/2} dt\Big| \ = \  {1\over \sqrt{2\pi}} \Big|\int_\sigma^1 xe^{-x^2s^2/2} ds \Big| \\
&\leq& {1\over \sqrt{2\pi}} |\sigma-1| |x| e^{-x^2/8}\  \leq \ 2|\sigma-1|
\end{eqnarray*}
because $|x|e^{-x^2/8}$ attains its maximum when $|x|=4$. 
The proof is finished.
\endproof

%\proof[Proof of Corollary \ref{c:Marcin}]
%The case of zero variance is clear. By replacing $X$ with $X/\sigma$, we can assume for simplicity that $\sigma=1$. 
%Choose a sequence $r_n\to\infty$ such that $r_n^{-1}\log \sup_{|u|=r_n} \Re f(u) \to 0$. Applying Theorem \ref{t:main-1} to $X$, $r_n$ and $\sigma=1$ gives that $F_{\overline X}=\Phi$. Hence, $\overline X$ is a Gaussian. The result follows.
%\endproof

\proof[Proof of Corollary \ref{c:CLT}]
Without loss of generality, we can replace $X_n, r_n$ by $X_n/\sigma_n, r_n\sigma_n$ so that we can assume that $\sigma_n=1$. Now, it is enough to apply Theorem \ref{t:main-1} to $X_n,r_n, 1$ instead of $X,r, \sigma$. 
\endproof

\end{appendix}

\begin{acks}[Acknowledgments]
T.C.D. was supported by the NUS and MOE grants  R-146-000-319-114 and MOE-T2EP20120-0010. S.G. was supported in part by the MOE grants  R-146-000-250-133, R-146-000-312-114 and MOE-T2EP20121-0013. 
H.S.T. was supported by the NUS Research
Scholarship.
We would like to thank Alexandre Eremenko for  bringing to our notice  the work of L.B. Golinskii. We would also like to thank Julian Sahasrabudhe and Marcus Michelen for illuminating discussions, and for pointing us to Theorem 12.2  in \cite{MS-2}. We additionally thank Frank den Hollander for his feedback and thoughtful discussions. 
\end{acks}

\end{document}